\newtheorem{theorem}{Theorem}[section]
\newtheorem{lemma}[theorem]{Lemma}
\newtheorem{corollary}[theorem]{Corollary}
\newtheorem{proposition}[theorem]{Proposition}
\newcommand{\rrVert}{\Vert}
\newcommand{\rrvert}{\vert}
\newcommand{\llVert}{\Vert}
\newcommand{\llvert}{\vert}
\newcommand{\cal}{\mathcal}
\newcommand{\Ten}{\,\widetilde{\otimes}\,}
\newcommand{\hten}{{\mathfrak H}}
\newcommand{\law}{\stackrel{\cal L}{\longrightarrow}}
\begin{document}
\begin{frontmatter}

\title{Central limit theorem for a Stratonovich integral with Malliavin calculus}
\runtitle{CLT for a Stratonovich integral}

\begin{aug}
\author[A]{\fnms{Daniel} \snm{Harnett}\corref{}\ead[label=e1]{dharnett@math.ku.edu}}
\and
\author[A]{\fnms{David} \snm{Nualart}\thanksref{t1}}
\runauthor{D. Harnett and D. Nualart}
\affiliation{University of Kansas}
\address[A]{Department of Mathematics\\
University of Kansas\\
405 Snow Hall\\
Lawrence, Kansas 66045-2142\\
USA\\
\printead{e1}} %adresu isvedimo komanda gale!
\end{aug}

\thankstext{t1}{Supported by NSF Grant DMS-09-04538.}

% HISTORY:
\received{\smonth{5} \syear{2011}}
\revised{\smonth{4} \syear{2012}}

% ABSTRACT
%
\begin{abstract}
The purpose of this paper is to establish the convergence in law of the
sequence of ``midpoint'' Riemann sums for a stochastic process of the
form $f'(W)$, where $W$ is a Gaussian process whose covariance function
satisfies some technical conditions. As a consequence we derive a
change-of-variable formula in law with a second order correction term
which is an It\^o integral of $f''(W)$ with respect to a Gaussian
martingale independent of $W$. The proof of the convergence in law is
based on the techniques of Malliavin calculus and uses a central limit
theorem for $q$-fold Skorohod integrals, which is a multi-dimensional
extension of a result proved by Nourdin and Nualart [\textit{J.
Theoret. Probab.} \textbf{23} (2010) 39--64]. The results proved in
this paper are generalizations of previous work by Swanson
[\textit{Ann. Probab.} \textbf{35} (2007) 2122--2159] and Nourdin and
R\'eveillac [\textit{Ann. Probab.} \textbf{37} (2009) 2200--2230], who
found a similar formula for two particular types of bifractional
Brownian motion. We provide three examples of Gaussian processes $W $
that meet the necessary covariance bounds. The first one is the
bifractional Brownian motion with parameters $H \le1/2$, $HK = 1/4$.
The others are Gaussian processes recently studied by Swanson
[\textit{Probab. Theory Related Fields} \textbf{138} (2007) 269--304],
[\textit{Ann. Probab.} \textbf{35} (2007) 2122--2159] in connection
with the fluctuation of empirical quantiles of independent Brownian
motion. In the first example the Gaussian martingale is a Brownian
motion, and expressions are given for the other examples.
\end{abstract}

% KEYWORDS
% Pirmas kwd is didziosios raides
%
\begin{keyword}[class=AMS]
\kwd{60H05}
\kwd{60H07}
\kwd{60F05}
\kwd{60G15}
\end{keyword}
\begin{keyword}
\kwd{It\^o formula}
\kwd{Skorohod integral}
\kwd{Malliavin calculus}
\kwd{fractional Brownian motion}
\end{keyword}

\pdfkeywords{60H05, 60H07, 60F05, 60G15,
Ito formula, Skorohod integral, Malliavin calculus,
fractional Brownian motion}

\end{frontmatter}

%s1 #&#
\section{Introduction}\label{sec1}
The aim of this paper is to obtain a change-of-variable formula in
distribution for a class of Gaussian stochastic processes $W = \{W_t,
t\ge0\}$ under certain conditions on the covariance function. These
conditions are
in the form of upper bounds on the covariance of process increments.
For example, the variance on the increment on an interval of length $s$
is bounded by $C\sqrt{s}$, and the covariance between the increments in
the intervals $[t-s,t]$, and $[r-s,r]$ is bounded by
\[
s^2 |t-r|^{-\alpha}( r - s)^{-\beta} +
s^2|t-r|^{-{3/2}},
\]
if $0<2s\le r<t$ and $|t-r|\ge2s$, where $1<\alpha\le\frac32$ and
$\alpha+ \beta=\frac32$.

For this process and a suitable function $f$, we study the behavior of
the ``midpoint'' Riemann sum
\[
\Phi_n(t):= \sum_{j=1}^{ \lfloor{nt/2} \rfloor}
f'(W_{({2j-1})/{n}}) (W_{{2j}/{n}} - W_{({2j-2})/{n}} ).
\]
The limit of this sum as $n$ tends to infinity is the Stratonovich
midpoint integral, denoted by $\int_0^t f'(W_s)^\circ \,dW_s$.
We show that the couple of processes
$ \{ (W_t, \Phi_n(t) ), t\ge0 \}$ converges in
distribution in the Skorohod space $({\mathbb D}[0,\infty))^2$ to
$
\{ (W_t, \Phi(t)), t\ge0 \}$, where
\[
\Phi(t) = f(W_t) - f(W_0) - \frac{1}{2}\int
_0^t f''(W_s)
\,dB_s,
\]
and $B=\{B_t, t\ge0\}$ is a Gaussian martingale independent of $W$
with variance $\eta(t)$, depending on the covariance properties of $W$.
This limit theorem can be reformulated by saying that the following It\^
o formula in distribution holds:
%
%e1 #&#
\begin{equation}
\label{MainResult}
f(W_t) \stackrel{\cal L} {=} f(W_0) +
\int_0^t f'(W_s)^\circ
\,dW_s + \frac{1}{2}\int_0^t
f''(W_s) \,dB_s.
\end{equation}

The above mentioned convergence is proven by showing the stable
convergence of a $d$-dimensional vector $(\Phi_n(t_1),\ldots, \Phi
_n(t_d))$ and a tightness argument. To show the convergence in law of
the finite-dimensional distributions, we show first, using the
techniques of Malliavin calculus, that $\Phi_n(t)$ is asymptotically
equivalent to a sequence of iterated Skorohod integrals involving
$f''(W_t)$. We then apply our $d$-dimensional version of the central
limit theorem for multiple Skorohod integrals proved by Nourdin and
Nualart in~\cite{NoNu}.

Recent papers by Swanson~\cite{Swanson}, Nourdin and R\'eveillac \cite
{NoRev} and Burdzy and Swanson~\cite{Burdzy} presented results
comparable to (\ref{MainResult}) for a specific stochastic process. In
\cite{Swanson}, a change-of-variable form was found for a process
equivalent to the bifractional Brownian motion with parameters $H = K =
1/2$, arising as the solution to the one-dimensional stochastic heat
equation with an additive space--time white noise. This result was
proven mostly by martingale methods. In~\cite{Burdzy} and~\cite{NoRev},
the respective authors considered fractional Brownian motion with Hurst
parameter $1/4$. In~\cite{Burdzy}, the authors covered integrands of the
form $f(t,W_t)$, which can be applied to fBm on $[\varepsilon, \infty
)$. The authors of~\cite{NoRev} proved a change-of-variable formula
that holds on $[0,\infty)$ in the sense of marginal distributions. The
proof in~\cite{NoRev} uses Malliavin calculus; several similar methods
were used in the present paper. More recently, Nourdin, R\'eveillac and
Swanson~\cite{NoRevSwan} studied the case of fractional Brownian motion
with $H = 1/6$. In that paper, weak convergence was proven in the
Skorohod space, and the Riemann sums are based on the trapezoidal approximation.

It happens that the conditions on the process $W$ are satisfied by a
bifractional Brownian motion with parameters $H \le1/2, HK = 1/4$.
In this case, $\eta(t)=Ct$ and the process $B$ is a Brownian motion.
This includes both cases studied in~\cite{NoRev} and~\cite{Swanson},
and extends to a larger class of processes. For another example, we
consider a class of centered Gaussian processes with
twice-differentiable covariance function of the form
\[
{\mathbb E} [W_r W_t ] =r\phi\biggl(\frac tr \biggr),\qquad
t\ge r,
\]
where $\phi$ is a bounded function on $[1,\infty)$ such that
\[
\phi'(x)= \frac{\kappa} {\sqrt{x-1}} + \frac{\psi(x)}{\sqrt{x}},
\]
and $\psi$ is bounded, differentiable and $|\psi'(x)| \le C
(x-1)^{-1/2}$. This class of Gaussian processes includes the
process arising as the limit of the median of a system of independent
Brownian motions studied by Swanson in~\cite{Swanson07}. For this process,
\[
\phi(x) = \sqrt{x} \arctan\biggl(\frac{1}{\sqrt{x-1}} \biggr).
\]
It is surprising to remark that in this case $\eta(t)=Ct^2$. This is
related to the fact that the variance of the increments of $W$ on the
interval $[t-s,t]$ behaves as $C\sqrt{s}$, when $s$ is small, although
the variance of
$W(t)$ behaves as $Ct$. Our third example is another Gaussian process
studied by Swanson in~\cite{Swanson10}. This process also arises from
the empirical quantiles of a system of independent Brownian motions.
Let $B=\{ B(t), t\ge0\}$ be a Brownian motion, where $B(0)$ is a
random variable with density $f\in{\cal C}^\infty$. Given certain
growth conditions on $f$, Swanson proves there is a Gaussian process $F
= \{F(t), t\ge0\}$ with covariance given by
\[
{\mathbb E} \bigl[ F(r) F(t) \bigr] = \rho(r,t) = \frac{{\mathbb
P}
( B(r) \le q(r), B(t) \le q(t) ) -\alpha^2}{u(q(r),r) u(q(t),t)},
\]
where $\alpha\in(0,1)$ and $q(t)$ are defined by ${\mathbb P}(B(t)
\le q(t)) = \alpha$. It is shown that this family of processes
satisfies the required conditions, where $\eta(t)$ is determined by $f$
and $\alpha$.

The outline of this paper is as follows: In Section~\ref{sec2}, we introduce the
basic environment and recall some aspects of Malliavin calculus that
will be used. In Section~\ref{sec3}, a multi-dimensional version of a central
limit theorem that appears in~\cite{NoNu} is given. In Section~\ref{sec4}, the
theorem is applied to prove convergence of $\Phi_n(t)$. Section
\ref{sec5}
discusses three examples of suitable process families. Finally, Section
\ref{sec6} contains proofs of three of the longer lemmas from Section~\ref{sec4}. Most of
the notation in this paper follows that of~\cite{NoNu}.

%s2 #&#
\section{Preliminaries and notation}\label{sec2}
Let $W = \{W(t), t \ge0\}$ be a centered Gaussian process defined on
a probability space $( \Omega, {\cal F}, P )$ with continuous
covariance function
\[
{\mathbb E}\bigl[W(t) W(s) \bigr] = R(t,s).
\]
We will always assume that ${\cal F}$ is the $\sigma$-algebra generated
by $W$. Let $\cal E$ denote the set of step functions on $[0, T]$ for
$T >0$; and let $\hten$ be the Hilbert space defined as the closure of
$\cal E$ with respect to the scalar product
\[
\langle{\mathbf1}_{[0,t]}, {\mathbf1}_{[0,s]}
\rangle_\hten= R(t,s).
\]
The mapping ${\mathbf1}_{[0,t]} \mapsto W(t)$ can be extended to a
linear isometry between $\hten$ and the Gaussian space spanned by $W$.
We denote this isometry by $h \mapsto W(h)$. In this way, $\{ W(h), h
\in\hten\}$ is an isonormal Gaussian process. For integers $q \ge1$,
let $\hten^{\otimes q}$ denote the $q$th tensor product of $\hten$. We
use $\hten^{\odot q}$ to denote the symmetric tensor product.

For integers $q \ge1$, let ${\cal H}_q$ be the $q$th Wiener chaos
of $W$, that is, the closed linear subspace of $L^2(\Omega)$ generated
by the random variables $\{ H_q(W(h)), h \in\hten, \|h \|_\hten= 1 \}
$, where $H_q(x)$ is the $q$th Hermite polynomial, defined as
\[
H_q (x) = {(-1)^q}e^{{x^2}/{2}}\,\frac{d^q}{dx^q}e^{-{x^2}/{2}}.
\]
For $q \ge1$, it is known that the map
%
%e2 #&#
\begin{equation}
\label{Hmap}
I_q\bigl(h^{\otimes q}\bigr) = H_q
\bigl(W(h)\bigr)
\end{equation}
provides an isometry between the symmetric product space $\hten^{\odot
q}$ (equipped with the modified norm $\frac{1}{\sqrt{q!}}\| \cdot\|
_{\hten^{\otimes q}}$) and ${\cal H}_q$. By convention, ${\cal H}_0 =
\mathbb{R}$ and $I_0(x) = x$.

%s2.1 #&#
\subsection{Elements of Malliavin calculus}\label{sec2.1}
Following is a brief description of some identities that will be used
in the paper. The reader may refer to~\cite{NoNu} for a brief survey,
or to~\cite{Nualart} for detailed coverage of this topic. Let $\cal S$
be the set of all smooth and cylindrical random variables of the form
$F = g(W(\phi_1),\ldots, W(\phi_n))$, where $n \ge1$; $g\dvtx  {\mathbb
R}^n \to{\mathbb R}$ is an infinitely differentiable function with
compact support, and $\phi_i \in\hten$. The Malliavin derivative of
$F$ with respect to $W$ is the element of $L^2(\Omega, \hten)$
defined as
\[
DF = \sum_{i=1}^n \frac{\partial g}{\partial w_i}
\bigl(W(\phi_1),\ldots, W(\phi_n)\bigr)
\phi_i.
\]
In particular, $DW(h) = h$. By iteration, for any integer $q >1$, we
can define the $q$th derivative $D^qF$, which is an element of
$L^2(\Omega, \hten^{\odot q})$. For example, if $F = g(W(t))$, then
$D^2F = g''(W(t)) {\mathbf1}_{[0,t]}^{\otimes2}$.

For any integer $q \ge1$ and real number $p \ge1$, let ${\mathbb
D}^{q,p}$ denote the closure of $\cal S$ with respect to the norm $\|
\cdot\|_{{\mathbb D}^{q,p}}$ defined as
\[
\| F \|_{{\mathbb D}^{q,p}}^p = {\mathbb E} \bigl[ |F|^p
\bigr] + \sum_{i=1}^q {\mathbb E} \bigl[
\bigl\| D^iF \bigr\|_{\hten^{\otimes i}}^p \bigr].
\]

We denote by $\delta$ the Skorohod integral, which is defined as the
adjoint of the operator $D$. This operator is also referred to as the
divergence operator in~\cite{Nualart}. A~random element $u \in
L^2(\Omega, \hten)$ belongs to the domain of $\delta$,
$\operatorname{Dom} \delta$, if
and only if
\[
\bigl\llvert{\mathbb E} \bigl[ \langle DF, u \rangle_\hten\bigr]
\bigr\rrvert\le c_u \sqrt{E\bigl[F^2\bigr]}
\]
for any $F \in{\mathbb D}^{1,2}$, where $c_u$ is a constant which
depends only on $u$. If $u \in \operatorname{Dom} \delta$, then the random variable
$\delta(u) \in L^2(\Omega)$ is defined for all $F \in{\mathbb
D}^{1,2}$ by the duality relationship,
\[
{\mathbb E} \bigl[ F\delta(u) \bigr] = {\mathbb E} \bigl[ \langle DF, u
\rangle_\hten\bigr].
\]
This is sometimes called the Malliavin integration by parts formula. We
iteratively define the multiple Skorohod integral for $q \ge1$ as
$\delta(\delta^{q-1}(u))$, with $\delta^0(u) = u$. For this definition
we have
\[
{\mathbb E} \bigl[ F\delta^q(u) \bigr] = {\mathbb E} \bigl[ \bigl
\langle D^qF, u \bigr\rangle_{\hten^{\otimes q}} \bigr],
\]
where $u \in \operatorname{Dom} \delta^q$ and $F \in{\mathbb D}^{q,2}$. Moreover,
if $h \in\hten^{\odot q}$, then we have $\delta^q (h) = I_q(h)$.

For $f, g \in\hten^{\otimes p}$, the following integral multiplication
formula holds:
%
%e3 #&#
\begin{equation}
\label{multi} \delta^p (f) \delta^p(g) = \sum
_{r=0}^p r! \pmatrix{p
\cr
r} \delta^{2p-2r}(f
\otimes_r g),
\end{equation}
where $\otimes_r$ is the contraction operator; see, for example, \cite
{NoNu}, Section 2.

We will use the Meyer inequality for the Skorohod integral; see, for
example, Proposition 1.5.7 of~\cite{Nualart}. Let ${\mathbb
D}^{k,p}(\hten^{\otimes k})$ denote the corresponding Sobolev space of
$\hten^{\otimes k}$-valued random variables. Then for $p \ge1$ and
integers $k \ge q \ge1$, we have
%
%e4 #&#
\begin{equation}
\label{Meyer}\bigl\| \delta^q(u) \bigr\|_{{\mathbb D}^{k-q, p}} \le c_{k,p}
\| u \|_{{\mathbb D}^{k,p}(\hten^{\otimes q})}
\end{equation}
for all $u \in{\mathbb D}^{k,p}(\hten^{\otimes k})$ and some constant
$c_{k,p}$.

The following three results will be used in the proof of Theorem~\ref{th4.3}.
The reader may refer to~\cite{NoNu} and~\cite{Nualart} for details.
%
%le2.1 #&#
\begin{lemma}\label{le2.1}
Let $q \ge1$ be an integer.

\begin{longlist}[(3)]
\item[(1)] Assume $F \in{\mathbb D}^{q,2}$, $u$ is a symmetric element
of $\operatorname{Dom} \delta^q$, and $ \langle D^rF$,  $\delta^j (u)
\rangle_{\hten
^{\otimes r}} \in L^2(\Omega, \hten^{\otimes q-r-j})$ for all $0 \le
r+j \le q$. Then $ \langle D^rF, u \rangle_{\hten^{\otimes
r}} \in \operatorname{Dom} \delta^r$ and
\[
F \delta^q(u) = \sum_{r=0}^q
\pmatrix{q
\cr
r} \delta^{q-r} \bigl( \bigl\langle D^rF, u
\bigr\rangle_{\hten^{\otimes r}} \bigr).
\]

\item[(2)] Suppose that $u$ is a symmetric element of ${\mathbb
D}^{j+k,2}(\hten^{\otimes j})$. Then we have
\[
D^k \delta^j (u) = \sum_{i=0}^{j \wedge k}
\pmatrix{k
\cr
i} \pmatrix{j
\cr
i} i! \delta^{j-i} \bigl(D^{k-i}u
\bigr).
\]

\item[(3)] Let $u, v$ be symmetric functions in ${\mathbb D}^{2q,
2}(\hten^{\otimes q})$. Then
\[
{\mathbb E}\bigl[\delta^q(u) \delta^q(v) \bigr] = \sum
_{i=0}^q \pmatrix{q
\cr
i}^2 {
\mathbb E} \bigl[ \bigl\langle D^{q-i}u, D^{q-i}v \bigr
\rangle_{\hten
^{\otimes
(2q-i)}} \bigr].
\]
In particular,
\[
\bigl\llVert\delta^q (u) \bigr\rrVert^2_{L^2(\Omega)}
= {\mathbb E} \bigl[ \delta^q(u)^2 \bigr] = \sum
_{i=0}^q \pmatrix{q
\cr
i}^2 {\mathbb E}
\bigl[ \bigl\llVert D^{q-i}u\bigr\rrVert_{\hten^{\otimes(2q-i)}}^2
\bigr].
\]
\end{longlist}
\end{lemma}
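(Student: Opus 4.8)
\medskip
\noindent\textbf{Proof idea.} All three identities are standard facts of Malliavin calculus (see \cite[Ch.~1]{Nualart} and \cite[Sec.~2]{NoNu}), and the plan is to assemble each from two building blocks — the duality relation $\mathbb{E}[G\,\delta^p(w)]=\mathbb{E}[\langle D^pG,w\rangle_{\hten^{\otimes p}}]$ and the commutation rule $D\delta(w)=w+\delta(Dw)$ — together with the Meyer inequality \eqref{Meyer}, which is what makes it legitimate to differentiate a Skorohod integral and then integrate it again. For part~1 I would argue by induction on $q$. The case $q=1$ is the classical product rule $F\delta(u)=\delta(Fu)+\langle DF,u\rangle_\hten$; for the inductive step one writes $\delta^q(u)=\delta(\delta^{q-1}(u))$, applies the $q=1$ rule and the inductive hypothesis (the latter with one inert tensor slot carried along), and re-sums the binomial coefficients by Pascal's identity $\binom{q-1}{r-1}+\binom{q-1}{r}=\binom{q}{r}$. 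At each ``peeling'' of a divergence, the hypothesis that $\langle D^rF,\delta^j(u)\rangle_{\hten^{\otimes r}}\in L^2(\Omega,\hten^{\otimes q-r-j})$ for $0\le r+j\le q$ is precisely what verifies the defining $L^2$-bound needed to place the next integrand in $\mathrm{Dom}\,\delta$. Once these domain memberships are granted, the identity itself also drops out of a direct computation: test $F\delta^q(u)$ against an arbitrary smooth cylindrical $G$, expand $D^q(GF)=\sum_{r=0}^q\binom{q}{r}\,D^rF \,\widetilde{\otimes}\, D^{q-r}G$ by Leibniz, drop the symmetrization inside the pairing using that $u$ is symmetric, and apply duality a second time.

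\medskip
For part~2 the plan is an induction on $k$, with $j$ fixed. The base case $k=1$ reads $D\delta^j(u)=j\,\delta^{j-1}(u)+\delta^j(Du)$; this follows from a sub-induction on $j$, applying the commutation rule $D\delta(w)=w+\delta(Dw)$ with $w=\delta^{j-1}(u)$. For the inductive step I would apply $D$ to the identity at level $k-1$ and use, term by term, $D\big[\delta^{j-i}(D^{k-1-i}u)\big]=(j-i)\,\delta^{j-i-1}(D^{k-1-i}u)+\delta^{j-i}(D^{k-i}u)$. Reindexing and collecting the coefficient of $\delta^{j-\ell}(D^{k-\ell}u)$ yields $\binom{k-1}{\ell}\binom{j}{\ell}\ell!+\binom{k-1}{\ell-1}\binom{j}{\ell-1}(\ell-1)!\,(j-\ell+1)$, which collapses to $\binom{k}{\ell}\binom{j}{\ell}\ell!$ once one uses $\binom{j}{\ell-1}(j-\ell+1)=\ell\binom{j}{\ell}$ and Pascal's rule. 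The hypothesis $u\in\mathbb{D}^{j+k,2}(\hten^{\otimes j})$, together with \eqref{Meyer}, guarantees that every object occurring has enough Sobolev regularity for the successive applications of $D$ and $\delta$.

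\medskip
For part~3 the plan is to combine the $q$-fold duality relation with part~2. Taking $F=\delta^q(v)$ in $\mathbb{E}[F\delta^q(u)]=\mathbb{E}[\langle D^qF,u\rangle_{\hten^{\otimes q}}]$ — legitimate because $\delta^q(v)\in\mathbb{D}^{q,2}$ by \eqref{Meyer} applied to $v\in\mathbb{D}^{2q,2}(\hten^{\otimes q})$ — gives $\mathbb{E}[\delta^q(u)\delta^q(v)]=\mathbb{E}[\langle D^q\delta^q(v),u\rangle_{\hten^{\otimes q}}]$. I would then expand $D^q\delta^q(v)$ by part~2 with $k=j=q$ and handle each resulting term $\mathbb{E}[\langle\delta^{q-i}(D^{q-i}v),u\rangle_{\hten^{\otimes q}}]$ by expanding $u$ in an orthonormal basis of $\hten^{\otimes q}$, pushing the pairing through the divergence (which commutes with contraction against fixed directions), and applying duality once more; this rewrites the term as a fully contracted expectation of $D^{q-i}u$ against $D^{q-i}v$ over $\hten^{\otimes(2q-i)}$. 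Tallying the combinatorial constants produced by part~2 then yields the asserted identity, of which the displayed formula for $\|\delta^q(u)\|_{L^2(\Omega)}^2$ is the special case $u=v$.

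\medskip
In all three parts the substantive work is clerical rather than conceptual: one must track carefully which tensor slots the iterated divergence acts on and which symmetrizations appear, and — more delicately — check at every step that the random element produced by an application of $\delta$ or $D$ still belongs to the domain on which the next operation is defined. This is exactly the role of the $L^2$-integrability conditions in part~1, of the $\mathbb{D}^{j+k,2}$- and $\mathbb{D}^{2q,2}$-regularity in parts~2 and~3, and of the Meyer inequality \eqref{Meyer}; getting these domain hypotheses to line up is the one point at which genuine care is required.
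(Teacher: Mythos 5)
Your proposal is correct and follows exactly the route the paper itself indicates: part 1 by induction from $F\delta(u)=\delta(Fu)+\langle DF,u\rangle_\hten$, part 2 by iterating $D\delta(u)=u+\delta(Du)$, and part 3 by repeated duality combined with part 2. The paper merely cites \cite{NoNu} and \cite{Nualart} for these steps, so your sketch is a faithful (and correctly tallied) expansion of the same argument.
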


\textit{Proof of} (1). This is proved in~\cite{NoNu}; see Lemma
2.1. It follows by induction from the relation $F\delta(u) = \delta(Fu)
+ \langle DF, u \rangle_\hten$; see~\cite{Nualart},
Proposition 1.3.3.\vspace*{5pt}

\textit{Proof of} (2). This follows from repeated application of
the relation $D \delta(u) = u + \delta(Du)$; see~\cite{Nualart},
Proposition 1.3.2.\vspace*{5pt}

\textit{Proof of} (3). This follows from repeated application of
the duality property; see~\cite{NoNu}, equation (2.12).

%%%%%%%%%%%%%%%%%%%%%%%%%% Section 3
%s3 #&#
\section{A central limit theorem for multiple Skorohod integrals}\label{sec3}
Let $X = \{ X(h), h \in\hten\}$ be an isonormal Gaussian process
associated with a real-separable Hilbert space $\hten$, defined on a
probability space $(\Omega, {\cal F}, P)$. We assume that ${\cal F}$ is
generated by $X$. The purpose of this section is to prove a
multi-dimensional version of a theorem proved in~\cite{NoNu}; see
Theorem 3.1. We begin by defining the notion of stable convergence.
%
%de3.1 #&#
\begin{definition}\label{de3.1} %Stable Convergence
Assume $F_n$ is a sequence of $d$-dimensional random variables defined
on a probability space $(\Omega, {\cal F}, P)$, and $F$ is a
$d$-dimensional random variable defined on $(\Omega, {\cal G}, P)$,
where ${\cal F} \subset{\cal G}$. We say that $F_n$ \textit{converges
stably} to $F$ as $n \to\infty$, if, for any continuous and bounded
function $f\dvtx  {\mathbb R}^d \to{\mathbb R}$ and bounded, ${\mathbb
R}$-valued, ${\cal F}$-measurable random variable $Z$, we have
\[
\lim_{n\to\infty} {\mathbb E} \bigl(f(F_n) Z \bigr) = {\mathbb E}
\bigl(f(F) Z \bigr).
\]
\end{definition}

%%Main Theorem
%
%th3.2 #&#
\begin{theorem}\label{th3.2}
Let $q \ge1$ be an integer, and suppose that $F_n$ is a sequence of
random variables in $\mathbb{R}^d$ of the form $F_n = \delta^q (u_n) =
( \delta^q (u_n^1),\ldots, \delta^q(u_n^d) )$, for a
sequence of $\mathbb{R}^d$-valued symmetric functions $u_n$ in
$\mathbb
{D}^{2q, 2q}(\hten^{\otimes q})$. Suppose that the sequence $F_n$ is
bounded in $L^1(\Omega, \hten)$ and that:
\begin{longlist}[(a)]
\item[(a)]$ \langle u_n^j, \bigotimes_{\ell=1}^m (D^{a_\ell}F_n^{j_\ell})
\otimes h \rangle_{\hten^{\otimes q}}$ converges to zero in
$L^1(\Omega
)$ for all integers $1 \le j, j_\ell\le d$, all integers $1 \le a_1,\ldots, a_m, r \le q-1$ such that $a_1 + \cdots+ a_m + r = q$; and all
$h \in\hten^{\otimes r}$.
\item[(b)] For each $1 \le i,j \le d$, $ \langle u_n^i, D^qF_n^j
\rangle_{\hten
^{\otimes q}}$ converges in $L^1(\Omega, \hten)$ to a random variable
$s_{ij}$, such that the matrix $\Sigma:= ( s_{ij} )_{d
\times d}$ is nonnegative definite (i.e., $\lambda^T \Sigma\lambda
\ge
0$ for all nonzero $\lambda\in{\mathbb R}^d$).
\end{longlist}
Then $F_n$ converges stably to a random variable in ${\mathbb R}^d$
with conditional Gaussian law ${\cal N} (0, \Sigma)$ given $X$.
\end{theorem}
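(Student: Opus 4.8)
The plan is to reduce the assertion to the convergence of a single family of characteristic functions weighted by ${\cal F}$-measurable random variables, and then to run an interpolation argument driven by the Malliavin integration-by-parts formula, in the spirit of the one-dimensional result of \cite{NoNu}. First I would observe that, since ${\cal F}=\sigma(X)$, in order to obtain the stated stable convergence it suffices to show that
\[
\lim_{n\to\infty}\mathbb E[e^{i\langle\lambda,F_{n}\rangle}Z]=\mathbb E[e^{-\frac12\lambda^{T}\Sigma\lambda}\,Z]
\]
for every $\lambda\in\mathbb R^{d}$ and every bounded ${\cal F}$-measurable $Z$: indeed, for $Z\ge0$ with $\mathbb E[Z]=1$ this is exactly the convergence of the characteristic function of $F_{n}$, under the fixed probability measure $Z\,dP$, to that of the limit (whose conditional law given $X$ is ${\cal N}(0,\Sigma)$ with $\Sigma$ being $\sigma(X)$-measurable), and one concludes by L\'evy's continuity theorem and linearity in $Z$. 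Write $G_{n}=\langle\lambda,F_{n}\rangle=\delta^{q}(v_{n})$ with $v_{n}=\sum_{j=1}^{d}\lambda_{j}u_{n}^{j}$, a symmetric element of ${\mathbb D}^{2q,2q}(\hten^{\otimes q})$; by Meyer's inequality \req{Meyer}, $G_{n}\in{\mathbb D}^{q,2q}$.

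Fix $\lambda$ and a bounded ${\cal F}$-measurable $Z$ and set $\varphi_{n}(t)=\mathbb E[e^{itG_{n}}Z]$ for $t\in[0,1]$. Since the $F_{n}$ are bounded in $L^{1}$ one may differentiate under the expectation, and since $e^{itG_{n}}Z\in{\mathbb D}^{q,2}$ the duality between $\delta^{q}$ and $D^{q}$ gives $\varphi_{n}'(t)=i\,\mathbb E[G_{n}e^{itG_{n}}Z]=i\,\mathbb E[\langle v_{n},D^{q}(e^{itG_{n}}Z)\rangle_{\hten^{\otimes q}}]$. I would then expand $D^{q}(e^{itG_{n}}Z)$ using the Leibniz rule for the product $e^{itG_{n}}\cdot Z$, the chain rule for $D^{\ell}(e^{itG_{n}})$ in terms of $DG_{n},\dots,D^{\ell}G_{n}$, the identity $D^{k}Z=i^{k}Z\,h^{\otimes k}$ valid when $Z=e^{iX(h)}$ (to which one reduces the general case by density), and $D^{a}G_{n}=\sum_{j}\lambda_{j}D^{a}F_{n}^{j}$. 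After collecting terms, $\langle v_{n},D^{q}(e^{itG_{n}}Z)\rangle$ splits into three types: (i) the main term $i\,t\,e^{itG_{n}}Z\,\langle v_{n},D^{q}G_{n}\rangle$, where $\langle v_{n},D^{q}G_{n}\rangle=\sum_{i,j}\lambda_{i}\lambda_{j}\langle u_{n}^{i},D^{q}F_{n}^{j}\rangle$; (ii) terms carrying at least one factor of $h$, which are finite linear combinations, with $n$-independent coefficients, of the quantities appearing in hypothesis (a); and (iii) ``pure'' terms, namely inner products of $v_{n}$ against tensor products of derivatives $D^{a}G_{n}$ with $1\le a\le q-1$ and no factor of $h$. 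Hypothesis (b) makes (i) converge in $L^{1}$ to $i\,t\,e^{itG_{n}}Z\,\lambda^{T}\Sigma\lambda$; hypothesis (a) makes (ii) converge to $0$ in $L^{1}$; and the terms in (iii), which are not directly covered by the hypotheses, are shown to be negligible by reapplying the commutation formulas of the Lemma (parts 1--2) and the product formula \req{multi} to lower the orders of the divergences, then estimating by the Cauchy--Schwarz inequality together with $L^{2}$-bounds coming from \req{Meyer} and the regularity $v_{n}\in{\mathbb D}^{2q,2q}(\hten^{\otimes q})$. As $|e^{itG_{n}}Z|\le\|Z\|_{\infty}$, all these $L^{1}$-statements pass under the expectation, yielding $\varphi_{n}'(t)=-\,t\,\mathbb E[\lambda^{T}\Sigma\lambda\,e^{itG_{n}}Z]+\varepsilon_{n}(t)$ with $\sup_{t\in[0,1]}|\varepsilon_{n}(t)|\le\|Z\|_{\infty}\rho_{n}$ and $\rho_{n}\to0$.

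To close the argument I would compare with the candidate limit $\Phi(t)=\mathbb E[e^{-\frac12 t^{2}\lambda^{T}\Sigma\lambda}Z]$, which solves $\Phi'(t)=-\,t\,\mathbb E[\lambda^{T}\Sigma\lambda\,e^{-\frac12 t^{2}\lambda^{T}\Sigma\lambda}Z]$. Since $\Sigma$ is random (though ${\cal F}$-measurable) a Gr\"onwall estimate cannot be applied to the single function $\varphi_{n}-\Phi$; instead, after reducing to the case where $\lambda^{T}\Sigma\lambda$ is bounded (truncate the $s_{ij}$ and let the truncation level tend to $\infty$ at the very end), one notes that $(\varphi_{n}-\Phi)'(t)=-\,t\,\mathbb E[(e^{itG_{n}}-e^{-\frac12 t^{2}\lambda^{T}\Sigma\lambda})\,\lambda^{T}\Sigma\lambda\,Z]+\varepsilon_{n}(t)$ is again of the same form, now with the bounded ${\cal F}$-measurable weight $\lambda^{T}\Sigma\lambda\,Z$. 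Setting $M_{n}(t):=\sup\{|\varphi_{n}(t)-\Phi(t)|:Z\text{ bounded }{\cal F}\text{-measurable},\ \|Z\|_{\infty}\le1\}$ and using that all the error bounds above are uniform over $\|Z\|_{\infty}\le1$, one obtains $M_{n}(t)\le C\int_{0}^{t}sM_{n}(s)\,ds+\rho_{n}$, hence $M_{n}(1)\le\rho_{n}e^{C/2}\to0$, which is the desired convergence.

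I expect the main obstacle to be step (iii): the pure products of intermediate-order Malliavin derivatives of $G_{n}$ are not among the hypotheses and must be reduced to expressions of type (a) through repeated use of the Leibniz/commutation identities while keeping the $L^{2}$-norms of the auxiliary iterated Skorohod integrals under control via \req{Meyer}, which is exactly what forces the strong regularity assumption $u_{n}\in{\mathbb D}^{2q,2q}(\hten^{\otimes q})$. A secondary but essential difficulty is the randomness of $\Sigma$, which requires the Gr\"onwall step to be carried out simultaneously over the whole family of ${\cal F}$-measurable weights, rather than for a single characteristic function.
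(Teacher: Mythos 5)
Your step (iii) contains the genuine gap. You correctly observe that the Fa\`a di Bruno expansion of $D^q\bigl(e^{itG_n}\bigr)$ produces, besides the single top-order term $it\,e^{itG_n}D^qG_n$, ``pure'' products $\bigotimes_{\ell=1}^m D^{a_\ell}G_n$ with $m\ge 2$, $1\le a_\ell\le q-1$ and $a_1+\cdots+a_m=q$, carrying no factor of $h$; but your plan to show that $\bigl\langle v_n,\bigotimes_\ell D^{a_\ell}G_n\bigr\rangle_{\hten^{\otimes q}}$ is negligible using only the commutation and product formulas, Meyer's inequality and the regularity $u_n\in{\mathbb D}^{2q,2q}(\hten^{\otimes q})$ cannot succeed. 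Regularity yields uniform $L^p$ bounds but no decay in $n$: for $q=2$ the quantity in question is $\langle u_n^i,DF_n^j\otimes DF_n^k\rangle_{\hten^{\otimes 2}}$, exactly the weighted-variation functional whose convergence to zero is a substantive structural fact -- in the application it is item (a.3) of Lemma 4.9, whose verification requires the covariance conditions (i)--(iv) and a nontrivial computation, and it is certainly not a consequence of membership in ${\mathbb D}^{4,4}$. The intended reading of hypothesis (a), made explicit in Remark 3.3 (third bullet, $\langle u_n^i, DF_n^j\otimes DF_n^k\rangle$ with no $h$), is that these pure products are themselves assumed to vanish; the paper's proof simply invokes (a) for them (``the lower-order derivatives in $D^qe^{i\lambda\cdot F_n}$ also vanish by condition (a)''). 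So the repair is not a new estimate but the correct interpretation of the hypothesis; as written, your step would fail.

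Apart from this, your way of closing the argument genuinely differs from the paper's. You interpolate along $t\mapsto t\lambda$ and run a Gr\"onwall estimate uniformly over the family of weights $\|Z\|_\infty\le 1$, after truncating $\lambda^T\Sigma\lambda$ to cope with the randomness of $\Sigma$. The paper instead uses the $L^1$ bound to get tightness of $\bigl(F_n,X(h_1),\dots,X(h_m)\bigr)$, extracts a convergent subsequence, passes to the limit in $\partial\phi_n/\partial\lambda_j$ computed two ways, and identifies the conditional characteristic function as the unique solution of the first-order system $\partial_{\lambda_j}\psi=-\sum_k\lambda_k s_{kj}\psi$ with $\psi(0)=1$; since that system is solved pathwise in $\omega$, the randomness of $\Sigma$ costs nothing and no truncation or uniform Gr\"onwall device is needed. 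Your route is workable but heavier. Two smaller points: differentiating $\varphi_n$ under the expectation is justified because $G_n=\delta^q(v_n)\in L^2(\Omega)$ by Meyer's inequality, not by the $L^1$ bound on $F_n$; and the reduction from general bounded ${\cal F}$-measurable $Z$ to smooth cylindrical $Y=g(X(h_1),\dots,X(h_m))$ must be performed \emph{before} applying the duality formula, since $D^qZ$ must exist for that step.
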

%
%re3.3 #&#
\begin{remark}\label{re3.3}
Conditions (a) and (b) mean that for $q \ge1$, some combinations of
lower-order derivative products are negligible. For example, for $q=2$,
then the following scalar products will converge to zero in
$L^1(\Omega, \hten)$:
\begin{itemize}
\item$ \langle u_n^i, h_1 \otimes h_2 \rangle_{\hten
^{\otimes2}}$ for
all $h_1, h_2 \in\hten$;
\item$ \langle u_n^i, DF_n^j \otimes h \rangle_{\hten
^{\otimes2}} $ for
all $h \in\hten$ and all $j$ (including $i=j$);
\item$ \langle u_n^i, DF_n^j \otimes DF_n^k \rangle_{\hten
^{\otimes2}}$
for all $1 \le k,j \le d$.
\end{itemize}
Only the $q$th-order derivative products converge to a nontrivial
random variable. Usually (see Section~\ref{sec6}), the term $ \langle u_n^i,
D^qF_n^j \rangle_{\hten^{\otimes q}}$ has the same asymptotic behavior
as $ \langle u_n^i, u_n^j \rangle_{\hten^{\otimes q}}$.
\end{remark}
%
%re3.4 #&#
\begin{remark}\label{re3.4} It suffices to impose condition (a) for $h \in{\cal
S}_0$, where ${\cal S}_0$ is a total subset of $\hten^{\otimes r}$.
\end{remark}

\begin{pf*}{Proof of Theorem~\ref{th3.2}}
As in the one-dimensional case considered in~\cite{NoNu}, we will use
the conditional characteristic function. Given any $h_1, \ldots,\break h_m \in
\hten$, we want to show that the sequence
\[
\xi_n = \bigl(F_n^1,\ldots,
F_n^d, X(h_1),\ldots, X(h_m)
\bigr)
\]
converges in distribution to a vector $ (F_\infty^1, \ldots,
F_\infty^d, X(h_1),\ldots, X(h_m) )$, where, for any vector $\lambda
\in
{\mathbb R}^d$, $F_\infty$ satisfies
%
%e5 #&#
\begin{equation}
\label{chgen} {\mathbb E} \bigl(e^{i\lambda\cdot
F_\infty} |
X(h_1),\ldots, X(h_m) \bigr) = \exp\bigl(-
\tfrac{1}{2}\lambda^T \Sigma\lambda\bigr),
\end{equation}
where $\lambda\cdot F_n = \sum_{j=1}^d \lambda_j F_n^j$ denotes the
usual scalar product in ${\mathbb R}^d$, and we use this notation to
avoid confusion with the scalar product in $\hten$.

Since $F_n$ is bounded in $L^1(\Omega, \hten)$, the sequence
$\xi_n$ is tight in the sense that for any $\varepsilon> 0$, there is
a $K>0$ such that \mbox{$P ( F_n\in[-K,K]^d ) > 1-\varepsilon$},
which follows from Chebyshev inequality.
Dropping to a subsequence if necessary, we may assume that $\xi_n$
converges in distribution to a limit\vadjust{\goodbreak} $ (F_\infty^1, \ldots,
F_\infty^d,\break X(h_1), \ldots, X(h_m) )$. Let $Y:= g (X(h_1),\ldots,
X(h_m) )$, where $g \in{\cal C}^\infty_b ({\mathbb R}^m)$, and
consider $\phi_n(\lambda) = \phi(\lambda, \xi_n):= {\mathbb
E}
(e^{i \lambda\cdot F_n} Y )$ for $\lambda\in{\mathbb R}^d$. The
convergence in law of $\xi_n$ implies that for each $1\le j\le d$,
%
%e6 #&#
\begin{equation}
\label{partialg1} \lim_{n\to\infty} \frac
{\partial\phi_n}{\partial
\lambda_j} =
\lim_{n\to\infty} i {\mathbb E} \bigl(F_n^j
e^{i
\lambda
\cdot F_n} Y \bigr) = i {\mathbb E} \bigl(F_\infty^j
e^{i \lambda
\cdot
F_\infty} Y \bigr),
\end{equation}
where convergence in distribution follows from a truncation argument
applied to~$F_n^j$.

On the other hand, using the duality property of the Skorohod integral
and the Malliavin derivative,
%
%e7 #&#
\begin{eqnarray}\label{expand2g}\quad
\frac{\partial\phi_n}{\partial\lambda_j} &=& i {\mathbb E} \bigl(\delta^q
\bigl(u_n^j\bigr) e^{i\lambda\cdot F_n} Y \bigr) = i{\mathbb
E} \bigl( \bigl\langle u_n^j,D^q
\bigl(e^{i\lambda\cdot F^n}Y \bigr) \bigr\rangle_{\hten
^{\otimes
q}} \bigr)
\nonumber\\[-1pt]
&=& i \sum_{a=0}^q \pmatrix{q
\cr
a} {
\mathbb E} \bigl( \bigl\langle u_n^j, D^a
\bigl(e^{i\lambda\cdot F_n} \bigr) \Ten D^{q-a}Y \bigr\rangle_{\hten
^{\otimes
q}}
\bigr)
\\[-1pt]
&=& i \Biggl\{ {\mathbb E} \bigl\langle
u_n^j, Y D^q e^{i\lambda
\cdot F_n} \bigr
\rangle_{\hten^{\otimes q}} + \sum_{a=0}^{q-1}
\pmatrix{q
\cr
a} {\mathbb E} \bigl\langle u_n^j,
D^a e^{i \lambda\cdot F_n} \Ten D^{q-a} Y \bigr
\rangle_{\hten^{\otimes q}} \Biggr\}.\nonumber
\end{eqnarray}

By condition (a), we have that $ \langle u_n^j, D^a e^{i\lambda
\cdot
F_n} \Ten D^{q-a}Y \rangle_{\hten^{\otimes q}}$ converges to zero in
$L^1(\Omega)$ when $a < q$, so the sum term vanishes as $n \to\infty$,
and this leaves
\begin{eqnarray*}
&&
\lim_{n \to\infty} i {\mathbb E} \bigl\langle u_n^j,
YD^qe^{i \lambda
\cdot
F_n} \bigr\rangle_{\hten^{\otimes q}} \\[-1pt]
&&\qquad=
\lim_{n\to\infty} i \sum_{k=1}^d {
\mathbb E} \bigl( i\lambda_k e^{i \lambda\cdot F_n} \bigl\langle
u_n^j, YD^q F_n^k
\bigr\rangle_{\hten^{\otimes q}} \bigr)
\\[-1pt]
&&\qquad= - \sum_{k=1}^d {\mathbb E} \bigl(
\lambda_k e^{i \lambda\cdot
F_\infty
} s_{kj} Y \bigr),
\end{eqnarray*}
because the lower-order derivatives in $D^q e^{i\lambda\cdot F_n}$ also
vanish by condition (a). Combining this with (\ref{partialg1}), we obtain
\[
i {\mathbb E} \bigl(F_{\infty}^j e^{i \lambda\cdot F_\infty} Y \bigr) =
-\sum_{k=1}^d \lambda_k {
\mathbb E} \bigl(e^{i \lambda\cdot
F_\infty
} s_{kj}Y \bigr).
\]

This leads to the PDE system,
\begin{eqnarray*}
&&
\frac{\partial}{\partial\lambda_j} {\mathbb E} \bigl(e^{i \lambda
\cdot F_\infty} | X(h_1),\ldots, X(h_m) \bigr) \\[-1pt]
&&\qquad= - \sum_{k=1}^d
\lambda_k s_{kj} {\mathbb E} \bigl(e^{i \lambda\cdot F_\infty} |
X(h_1),\ldots, X(h_m) \bigr),
\end{eqnarray*}
which has unique solution (\ref{chgen}).\vadjust{\goodbreak}
\end{pf*}

%%%%%%%%%%%%%%%%%% Section 4
%s4 #&#
\section{Central limit theorem for the Stratonovich integral}\label{sec4}
Suppose that $W = \{W_t, t \ge0 \}$ is a centered Gaussian process, as
in Section~\ref{sec2}, that meets conditions (i) through (v), below, for any
$T>0$, where the constants $C_i$ may depend on $T$:
\begin{longlist}
\item For any $0<s\le t\le T$, there is a constant $C_1$ such that
\[
{\mathbb E} \bigl[ (W_t - W_{t-s} )^2 \bigr]
\le C_1s^{{1/2}}.
\]
\item For any $s > 0$ and $2s \le r, t \le T$ with $|t-r| \ge2s$,
\begin{eqnarray*}
&&
\bigl\llvert{\mathbb E} \bigl[ (W_t - W_{t-s})
(W_r - W_{r-s}) \bigr]\bigr\rrvert\\
&&\qquad\le
C_1s^2 |t-r|^{-\alpha}(t\wedge r -
s)^{-\beta} + s^2|t-r|^{-
{3/2}}
\end{eqnarray*}
for positive constants $\alpha, \beta, \gamma$, such that $1 <
\alpha
\le\frac{3}{2}$ and $\alpha+ \beta= \frac{3}{2}$.
%Observe that for $j,k \ge1$, $\beta_n(j,k)$ corresponds to the case
%$t \wedge r \ge2s$.
%
\item For $0< t \le T$ and $0<s\le r \le T$,
\begin{eqnarray*}
&&
\bigl\llvert{\mathbb E} \bigl[ W_t (W_{r+s} -
2W_r + W_{r-s} ) \bigr]\bigr\rrvert\\
&&\qquad\le\cases{
C_2s^{{1/2}}, &\quad if $r < 2s$ or $|t-r|<2s$,
\cr
C_2s^2 \bigl( (r-s)^{-{3/2}} + |t-r|^{-{3/2}}
\bigr), &\quad if $r \ge2s$ and $|t-r|\ge2s$,}
\end{eqnarray*}
for some positive constant $C_2$.
\item For any $0<s\le t\le T-s$,
\begin{eqnarray*}
&&
\bigl\llvert{\mathbb E} \bigl[ W_t (W_{t+s} -
W_{t-s}) \bigr]\bigr\rrvert\\
&&\qquad\le\cases{ C_3s^{{1/2}},
&\quad if $t< 2s$,
\cr
C_3s (t-s)^{-{1/2}}, &\quad if $t \ge2s$,}
\end{eqnarray*}
and for each $0<s\le r\le T$,
\begin{eqnarray*}
&&
\bigl\llvert{\mathbb E} \bigl[ W_r (W_{t+s} -
W_{t-s}) \bigr]\bigr\rrvert\\
&&\qquad\le\cases{ C_3s^{{1/2}},
&\quad if $t< 2s$ or $|t-r|<2s$,
\cr
C_3s (t-s)^{-{1/2}}+
C_3s|t-r|^{-{1/2}}, &\quad if $t \ge2s$ and $|t-r|\ge2s$,}
\end{eqnarray*}
for some positive constant $C_3$. In addition, for $t > 2s$,
\[
\bigl\llvert{\mathbb E} \bigl[ W_s (W_t -
W_{t-s}) \bigr]\bigr\rrvert\le C_3s^{{1/2}+\gamma}(t-2s)^{-\gamma}
\]
for some $\gamma> 0$.
\item Consider a uniform partition of $[0, \infty)$ with increment
length $1/n$. Define for integers $j,k \ge0$ and $n \ge1$,
\[
\beta_n(j,k) = {\mathbb E} \bigl[ (W_{({j+1})/{n}} -
W_{
{j}/{n}} ) (W_{({k+1})/{n}} - W_{{k/n}} ) \bigr].
\]
Next, define
\begin{eqnarray*}
\eta_n^+(t) &=&\sum_{j,k = 1}^{ \lfloor{nt/2} \rfloor}
\beta_n(2j-1,2k-1)^2 + \beta_n(2j-2,2k-2)^2;
\\
\eta_n^-(t) &=&\sum_{j,k = 1}^{ \lfloor{nt/2} \rfloor}
\beta_n(2j-2,2k-1)^2 + \beta_n(2j-1,2k-2)^2.
\end{eqnarray*}
Then for each $t\ge0$,
\[
\lim_{n \to\infty}\eta_n^+(t) = \eta^+(t) \quad\mbox{and}\quad
\lim_{n
\to\infty}\eta_n^-(t) = \eta^-(t)
\]
both exist, where $\eta^+(t), \eta^-(t)$ are nonnegative and
nondecreasing functions.
\end{longlist}

Consider a real-valued function $f \in{\cal C}^9 ({\mathbb R})$, such
that $f$ and all its derivatives up to order 9 have at most exponential
growth, that is,
\[
\bigl\llvert f^{(k)}(x)\bigr\rrvert< K_1 \exp
\bigl(K_2|x|^\alpha\bigr),\qquad x \in{\mathbb R}, \alpha< 2,
\]
for $k = 0,\ldots, 9$, and positive constants $K_1, K_2$. We will
refer to this as condition~$(0)$.

In the following, the term $C$ represents a generic positive constant,
which may change from line to line. The constant $C$ may depend on $T$
and the constants in conditions (0) and (i)--(v), listed above.

The results of the next lemma follow from conditions (i) and (ii).
%%Lemma 4.1 Double Sum
%
%le4.1 #&#
\begin{lemma}\label{le4.1}
Using the notation described above, for integers $0 \le a
< b$ and integers $r, n \ge1$, we have the estimate
\[
\sum_{j,k = a}^b \bigl|\beta_n(j,k)\bigr|^r
\le C(b-a+1)n^{-{r/2}}.
\]
\end{lemma}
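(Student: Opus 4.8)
The plan is to split the double sum according to whether $j$ and $k$ are far apart or close together, and to estimate each regime using the two covariance bounds available. Write
\[
\sum_{j,k=a}^b |\beta_n(j,k)|^r = \sum_{|j-k| \le M} |\beta_n(j,k)|^r + \sum_{|j-k| > M} |\beta_n(j,k)|^r
\]
for a fixed integer threshold $M$ (taking $M=1$ or $2$ should suffice, matched to the ``$|t-r|\ge 2s$'' hypothesis in condition (ii) when $s=1/n$ and the indices differ by more than $2$). For the diagonal-type terms $|j-k|\le M$, I would use condition (i): since $\beta_n(j,j)={\mathbb E}[(W_{(j+1)/n}-W_{j/n})^2]\le C_1 n^{-1/2}$, and by Cauchy--Schwarz $|\beta_n(j,k)|\le \sqrt{\beta_n(j,j)\beta_n(k,k)}\le C_1 n^{-1/2}$ as well, so each such term is at most $(C_1 n^{-1/2})^r = C n^{-r/2}$; there are at most $(2M+1)(b-a+1)$ of them, giving a contribution $\le C(b-a+1)n^{-r/2}$.

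For the off-diagonal terms $|j-k|>M$, I would apply condition (ii) with $s=1/n$: writing $t=(j+1)/n$, $r=(k+1)/n$ (so $|t-r|=|j-k|/n$ and $t\wedge r - s \ge (a)/n$ or more carefully $\ge$ the smaller index over $n$), we get
\[
|\beta_n(j,k)| \le C_1 n^{-2}\Big( |j-k|^{-\alpha} n^{\alpha}\, (\text{(smaller index)}/n)^{-\beta} + |j-k|^{-3/2} n^{3/2}\Big).
\]
Using $\alpha+\beta = 3/2$, the first term is $C n^{-2} n^{\alpha} n^{\beta} |j-k|^{-\alpha} (\text{smaller index})^{-\beta} = C n^{-1/2} |j-k|^{-\alpha}(\text{smaller index})^{-\beta}$, and the second is $C n^{-1/2} |j-k|^{-3/2}$. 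Hence in this regime $|\beta_n(j,k)|\le C n^{-1/2}\big(|j-k|^{-\alpha}(\text{smaller index})^{-\beta} + |j-k|^{-3/2}\big)$, and raising to the $r$-th power (with $r\ge 1$) and bounding $|j-k|^{-\alpha}\le M^{-\alpha}$... — more usefully, since $\alpha>1$ and $3/2>1$, the sum $\sum_{k:|j-k|>M}|j-k|^{-r\alpha}$ and $\sum_k |j-k|^{-3r/2}$ are bounded by absolute constants (convergent series), and the factor $(\text{smaller index})^{-\beta r}\le 1$ when the smaller index is $\ge 1$ (one must be slightly careful if $a=0$, but $\beta_n(0,k)$ can be absorbed via condition (i) or the index shifted). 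Summing over $k$ first gives $\sum_{k}|\beta_n(j,k)|^r \le C n^{-r/2}$ uniformly in $j$, and then summing over the $(b-a+1)$ values of $j$ yields the claimed bound $C(b-a+1)n^{-r/2}$.

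The main obstacle I anticipate is bookkeeping at the boundary of the partition — specifically handling indices near $0$ where the factor $(t\wedge r - s)^{-\beta}$ in condition (ii) blows up, and verifying that the hypothesis ``$2s\le r,t$'' of (ii) is met (which fails for the smallest indices when $s=1/n$). The clean fix is to peel off the $O(1)$-many terms with an index below some fixed cutoff and bound those by condition (i) and Cauchy--Schwarz exactly as in the diagonal case, since there are only a bounded number of them per row. A secondary point is to confirm that the exponents combine as claimed: the key identity $\alpha+\beta=3/2$ is exactly what makes the $n$-powers from $|t-r|^{-\alpha}$ and $(t\wedge r-s)^{-\beta}$ recombine to the single factor $n^{-1/2}$ after the $n^{-2}=s^2$ is distributed, and $\alpha>1$ is exactly what makes the resulting spatial series summable — so the inequality is tight in its use of both conditions.
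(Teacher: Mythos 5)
Your proposal is correct and follows essentially the same route as the paper: split the indices into a near-diagonal/boundary set handled by condition (i) plus Cauchy--Schwarz and a far set handled by condition (ii), where $\alpha>1$ makes $\sum_{k}|j-k|^{-\alpha}$ summable uniformly in $j$, and where the low-index terms violating the hypotheses of (ii) are peeled off and absorbed into the condition-(i) estimate. The only cosmetic difference is your treatment of $r>1$ (raising the condition-(ii) bound directly to the $r$-th power) versus the paper's reduction to $r=1$ by extracting $r-1$ uniform factors of $C_1 n^{-1/2}$; both work.
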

\begin{pf} Suppose first that $r=1$. Let $I =\{ (j,k)\dvtx  a \le j,k\le b,
|k-j|\ge2, j\wedge k \ge2\}$ and $J = \{ (j,k)\dvtx  a \le j,k\le b, (j,k)
\notin I\}$. Consider the decomposition
\[
\sum_{j,k=a}^b \bigl\llvert
\beta_n(j,k)\bigr\rrvert= \sum_{(j,k)\in I}
\bigl\llvert\beta_n(j,k)\bigr\rrvert+ \sum
_{(j,k) \in J} \bigl\llvert\beta_n(j,k)\bigr\rrvert.
\]
Then by condition (ii), the first sum is bounded by
\[
\sum_{(j,k)\in I} n^{-{1/2}}|j-k|^{-\alpha} \le
Cn^{-{1/2}} (b-a+1),
\]
and the second sum, using condition (i) and Cauchy--Schwarz, is bounded
by $Cn^{-{1/2}}(b-a+1)$.
For the case $r > 1$, condition (i) implies $\llvert\beta_n(j,k)\rrvert
\le C_1n^{-{1/2}}$ for all $j,k$. It follows that we can write
\begin{eqnarray*}
\sum_{j,k=a}^b \bigl\llvert
\beta_n(j,k)\bigr\rrvert^r &\le& C_1n^{-({r-1})/{2}}
\sum_{j,k=a}^b \bigl\llvert
\beta_n(j,k)\bigr\rrvert\\
&\le& C(b-a+1)n^{-{r/2}}.
\end{eqnarray*}
\upqed
\end{pf}

%Corollary 4.2
%
%co4.2 #&#
\begin{corollary}\label{co4.2}
Using the notation of Lemma~\ref{le4.1}, for each integer $r
\ge1$,
\begin{eqnarray*}
&&\sum_{j,k=1}^{ \lfloor{nt/2} \rfloor} \bigl(\bigl\llvert
\beta_n(2j-1,2k-1)\bigr\rrvert^r + \bigl\llvert
\beta_n(2j-1,2k-2)\bigr\rrvert^r \\
&&\hspace*{26.5pt}{}+ \bigl\llvert
\beta_n(2j-2,2k-1)\bigr\rrvert^r + \bigl\llvert
\beta_n(2j-2, 2k-2)\bigr\rrvert^r \bigr)
\\
&&\qquad\le C \biggl\lfloor\frac{nt}{2} \biggr\rfloor n^{-{r/2}}.
\end{eqnarray*}
\end{corollary}
\begin{pf} Note that
\begin{eqnarray*}
&&\sum_{j,k=1}^{ \lfloor{nt/2} \rfloor} \bigl(\bigl\llvert
\beta_n(2j-1,2k-1)\bigr\rrvert^r + \bigl\llvert
\beta_n(2j-1,2k-2)\bigr\rrvert^r \\
&&\hspace*{27pt}{}+ \bigl\llvert
\beta_n(2j-2,2k-1)\bigr\rrvert^r + \bigl\llvert
\beta_n(2j-2, 2k-2)\bigr\rrvert^r \bigr)
\\
&&\qquad = \sum_{j,k=0}^{2 \lfloor{nt}/{2} \rfloor-1} \bigl\llvert
\beta_n(j,k)\bigr\rrvert^r.
\end{eqnarray*}
\upqed
\end{pf}

Consider a uniform partition of $[0, \infty) $ with increment length
$1/n$. The Stratonovich midpoint integral of $f'(W)$ will be defined as
the limit in distribution of the sequence (see~\cite{Swanson})
%
%e8 #&#
\begin{equation}
\label{Phin} \Phi_n(t):= \sum
_{j=1}^{ \lfloor{nt/2} \rfloor} f'(W_{({2j-1})/{n}})
(W_{{2j}/{n}} - W_{({2j-2})/{n}} ).
\end{equation}

We introduce the following notation, as used in~\cite{NoNu}:
$\varepsilon_t:= {\mathbf1}_{[0,t]}$; and $\partial_{j/n}:=
{\mathbf
1}_{ [{j/n},({j+1})/{n} ]} $.\vspace*{1pt}

The following is the major result of this section.
%
%th4.3 #&#
\begin{theorem}\label{th4.3} %%%%%%Theorem 4.3
Let $f$ be a real function satisfying condition (0), and let $W = \{
W_t, t \ge0\}$ be a Gaussian process satisfying conditions \textup{(i)} through
\textup{(v)}. Then
\[
\bigl( W_t, \Phi_n(t) \bigr) \law\biggl(
W_t, f(W_t) - f(W_0) - \frac
{1}{2}\int
_0^t f''(W_s)
\,dB_s \biggr)
\]
as $n \to\infty$ in the Skorohod space $ ({\mathbb D}[0, \infty
) )^2$, where $\eta(t) = \eta^+(t) - \eta^-(t)$ for the functions
defined in condition \textup{(v)}; and $B = \{B_t, t\ge0\}$ is scaled Brownian
motion, independent of $W$, and with variance ${\mathbb E}
[B_t^2 ] = 2\eta(t)$.
\end{theorem}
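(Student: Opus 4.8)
The plan is to prove convergence of the finite-dimensional distributions of $(W_\cdot,\Phi_n(\cdot))$ in the stable sense and then add a tightness argument in $({\mathbb D}[0,\infty))^2$. Fix $T>0$ and times $0\le t_1<\cdots<t_d\le T$, and abbreviate $\partial_j^+=\partial_{\frac{2j-1}{n}}$, $\partial_j^-=\partial_{\frac{2j-2}{n}}$, $\delta_j^\pm=W(\partial_j^\pm)$, so that $W_{\frac{2j}{n}}-W_{\frac{2j-2}{n}}=\delta_j^++\delta_j^-$. \textbf{Step 1 (reduction to a double Skorohod integral).} I first show that for each $t$,
\[
\Phi_n(t)=f(W_t)-f(W_0)-\tfrac12\,\delta^2\big(u_n(t)\big)+R_n(t),\qquad u_n(t):=\sum_{j=1}^{\Ntu}f''\!\big(W_{\frac{2j-1}{n}}\big)\big((\partial_j^+)^{\otimes 2}-(\partial_j^-)^{\otimes 2}\big),
\]
with $R_n(t)\to0$ in $L^2(\Omega)$. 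To obtain this, Taylor-expand $f(W_{\frac{2j}{n}})-f(W_{\frac{2j-2}{n}})$ about the midpoint $W_{\frac{2j-1}{n}}$ to a sufficiently high order (Condition $(0)$ supplies the smoothness) and sum over $j$: the telescoping left side is $f(W_{2\Ntu/n})-f(W_0)$, which differs from $f(W_t)-f(W_0)$ by a negligible boundary increment; the order-one term reproduces $\Phi_n(t)$; the order-two term, using $W(h)^2=\|h\|_{\hten}^2+\delta^2(h^{\otimes 2})$, gives the displayed $\delta^2$ term plus the deterministic remainder $\tfrac12\sum_j f''(W_{\frac{2j-1}{n}})(\beta_n(2j-1,2j-1)-\beta_n(2j-2,2j-2))$; the terms of order $\ge3$ are negligible, decomposing each power into Hermite polynomials and bounding the top-chaos multiple integrals with Corollary 4.2 and the rest by $L^2$ estimates; and pulling $f''$ outside $\delta^2$ via part 1 of Lemma 2.1 produces only lower-order Skorohod integrals that likewise go into $R_n$. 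These estimates are routine in nature but lengthy; I would state them as lemmas proved in Section 6, using conditions (i)--(iii) for the deterministic and Taylor remainders and conditions (i)--(iv) for the Skorohod-integral remainders.

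\textbf{Step 2 (the CLT for multiple Skorohod integrals).} Apply Theorem 3.2 with $q=2$ to $F_n=(F_n^1,\dots,F_n^d)$, $F_n^i:=-\tfrac12\delta^2(u_n(t_i))$. Boundedness of $F_n$ in $L^2$ follows from part 3 of Lemma 2.1 with Lemma 4.1 and Corollary 4.2. By Remark 3.4 (and since tensor products of indicators $\varepsilon_s$ are total), condition (a) reduces to showing that $\langle u_n^i,\varepsilon_a\otimes\varepsilon_b\rangle_{\hten^{\otimes 2}}$, $\langle u_n^i,DF_n^j\otimes\varepsilon_a\rangle_{\hten^{\otimes 2}}$ and $\langle u_n^i,DF_n^j\otimes DF_n^k\rangle_{\hten^{\otimes 2}}$ converge to $0$ in $L^1(\Omega)$; after rewriting $DF_n^j$ through $\delta^2(Du_n(t_j))$ and $\delta(u_n(t_j))$ by part 2 of Lemma 2.1, each becomes a multiple sum whose summands are products of factors of the form $\beta_n(\cdot,\cdot)$, $\langle\varepsilon_\cdot,\partial_\cdot\rangle_{\hten}$ and second-difference covariances, and the bounds of conditions (i)--(iv) force every such sum to be $O(n^{-\epsilon})$. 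For condition (b) I replace $\langle u_n^i,D^2F_n^j\rangle_{\hten^{\otimes 2}}$ by $\tfrac14\langle u_n(t_i),u_n(t_j)\rangle_{\hten^{\otimes 2}}$ (the difference being of the type just handled), reduce to $t_i=t_j$ using the off-diagonal decay in condition (ii), and evaluate
\[
\tfrac14\|u_n(s)\|_{\hten^{\otimes 2}}^2=\tfrac14\sum_{j,k=1}^{\lfloor ns/2\rfloor}f''(W_{\frac{2j-1}{n}})\,f''(W_{\frac{2k-1}{n}})\sum_{i,i'=1}^{2}(-1)^{i+i'}\beta_n(2j-i,2k-i')^2 ;
\]
localizing $f''$ by the same $\beta_n^2$-decay together with continuity of $r\mapsto f''(W_r)$, and recognizing that the limits of the positive and negative parts of the inner alternating sum are $\eta^+(s)$ and $\eta^-(s)$ of Condition (v), this converges in $L^1(\Omega)$ to $s_{ij}:=\tfrac14\int_0^{t_i\wedge t_j}f''(W_r)^2\,d\eta(r)$ with $\eta=\eta^+-\eta^-$ (which is nonnegative and nondecreasing). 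Then $\Sigma=(s_{ij})$ is nonnegative definite because $\lambda^T\Sigma\lambda=\tfrac14\int_0^T f''(W_r)^2\big(\sum_{i:\,t_i\ge r}\lambda_i\big)^2 d\eta(r)\ge0$, and Theorem 3.2 gives that $F_n$ converges stably, given $W$, to ${\mathcal N}(0,\Sigma)$. Since $s_{ij}=\mathrm{Cov}\big(\tfrac12\int_0^{t_i}f''(W_r)\,dB_r,\tfrac12\int_0^{t_j}f''(W_r)\,dB_r\,\big|\,W\big)$ for a scaled Brownian motion $B$ with ${\mathbb E}[B_t^2]=\eta(t)$ independent of $W$, and $W_{t_i}=X(\varepsilon_{t_i})$ lies in the conditioning field, combining with Step 1 and $R_n\to0$ yields stable convergence of $(W_{t_1},\dots,W_{t_d},\Phi_n(t_1),\dots,\Phi_n(t_d))$ to the finite-dimensional distributions of $(W_t,\,f(W_t)-f(W_0)-\tfrac12\int_0^t f''(W_s)\,dB_s)$.

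\textbf{Step 3 (tightness and conclusion).} The first coordinate does not depend on $n$ and has a continuous modification, hence is tight in ${\mathbb D}[0,\infty)$; by the decomposition of Step 1 it remains to prove tightness of $\{\delta^2(u_n(\cdot))\}$. For $\tau\le s\le t\le T$ the increments $\delta^2(u_n(t))-\delta^2(u_n(s))$ and $\delta^2(u_n(s))-\delta^2(u_n(\tau))$ are double Skorohod integrals of kernels supported on disjoint ranges of $j$, so part 3 of Lemma 2.1 and Lemma 4.1/Corollary 4.2 applied on those ranges give a moment bound ${\mathbb E}\big[|\delta^2(u_n(t))-\delta^2(u_n(s))|^2\,|\delta^2(u_n(s))-\delta^2(u_n(\tau))|^2\big]\le C(t-\tau)^{1+\epsilon}$, the standard criterion for tightness in ${\mathbb D}[0,\infty)$. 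Tightness of both coordinates gives tightness of the pair in $({\mathbb D}[0,\infty))^2$, and since the limit process is continuous, Step 2 upgrades the finite-dimensional convergence to convergence in law in $({\mathbb D}[0,\infty))^2$.

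\textbf{Main obstacle.} The bulk of the work is in the negligibility estimates — the remainder $R_n$ of Step 1 and condition (a) of Theorem 3.2 in Step 2. In each, an inner product in $\hten^{\otimes q}$ has to be expanded into a multiple sum of products of covariances of increments and of second differences of $W$, and the difficulty is that individual summands are only of order $n^{-1/2}$, so one must exploit the precise off-diagonal decay encoded in conditions (ii)--(iv) (the exponents $\alpha,\beta,\gamma$ and $\tfrac32$) to extract the extra power of $n$ that makes the whole sum vanish. A subtler instance is the deterministic remainder in Step 1: condition (iii) alone bounds its summands only by $n^{-1/2}$, so proving $\sum_j f''(W_{\frac{2j-1}{n}})(\beta_n(2j-1,2j-1)-\beta_n(2j-2,2j-2))\to0$ requires exploiting near-cancellation in the consecutive variance increments together with the Hölder regularity of $r\mapsto f''(W_r)$ — and the analogous localization of $f''$ against the $\beta_n^2$-sums, combined with Condition (v), is what pins down the limiting conditional covariance $s_{ij}$.
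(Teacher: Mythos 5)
Your proposal follows essentially the same route as the paper: the same midpoint Taylor/telescoping reduction of $\Phi_n(t)$ to $f(W_t)-f(W_0)-\tfrac12\delta^2(u_n(t))$ plus negligible remainders, the same application of the multidimensional CLT of Theorem 3.2 with $q=2$ (you apply it to the values $F_n(t_i)$ with a non-diagonal $\Sigma$ whose nonnegative-definiteness you then check, where the paper uses the increments $F_n(t_i)-F_n(t_{i-1})$ and gets a diagonal $\Sigma$ -- an equivalent choice), the same identification of the limiting conditional covariance through condition (v), and the same tightness-plus-finite-dimensional-convergence conclusion. Two small points: the product moment bound in your Step 3 requires fourth moments of each increment of $\delta^2(u_n(\cdot))$ followed by Cauchy--Schwarz (the paper obtains these from the Meyer inequality \req{Meyer} together with Lemma 4.7; part 3 of Lemma 2.1 alone only controls second moments), and by part 2 of Lemma 2.1 the dominant term of $\langle v_n^i,D^2F_n^j\rangle$ is $2\langle v_n^i,v_n^j\rangle=\tfrac12\langle u_n(t_i),u_n(t_j)\rangle$ rather than $\tfrac14\langle u_n(t_i),u_n(t_j)\rangle$ -- a constant-tracking slip that mirrors the paper's own looseness in (b.2) and does not affect the architecture of the argument (your explicit treatment of the deterministic remainder $\tfrac12\sum_j f''(W_{\frac{2j-1}{n}})(\|\partial_{\frac{2j-1}{n}}\|_\hten^2-\|\partial_{\frac{2j-2}{n}}\|_\hten^2)$, which the paper passes over silently, is a point in your favor).
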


The rest of this section consists of the proof of Theorem~\ref{th4.3},
and is presented in a series of lemmas. The proofs of Lemmas
\ref{le4.4},~\ref{le4.5} and~\ref{le4.9}, which are rather technical, are
deferred to Section~\ref{sec6}. We begin with an expansion of $f(W_t)$,
following the methodology used in \cite {Swanson}. Consider the
telescoping series
\begin{eqnarray*}
f(W_t) &=& f(W_0) + \sum_{j=1}^{ \lfloor{nt/2} \rfloor}
\bigl[f(W_{{2j}/{n}}) - f(W_{({2j-2})/{n}}) \bigr] \\
&&{}+ f(W_t) -
f(W_{({2}/{n})\lfloor{nt}/{2} \rfloor}),
\end{eqnarray*}
where the sum is zero by convention if $ \lfloor\frac
{nt}{2}
\rfloor=0$. Using a Taylor series expansion of order 2, we
obtain
\begin{eqnarray*}
\Phi_n(t) &=& f(W_t) - f(W_0) \\
&&{}-
\frac{1}{2}\sum_{j=1}^{ \lfloor{nt/2} \rfloor}
f''(W_{
({2j-1})/{n}}) \bigl( \Delta
W_{{2j}/{n}}^2 - \Delta W_{
({2j-1})/{n}}^2 \bigr)
\\
&&{} - \sum_{j=1}^{ \lfloor{nt/2} \rfloor} R_0(
W_{{2j}/{n}}) + \sum_{j=1}^{ \lfloor{nt/2} \rfloor}
R_1 ( W_{({2j-2})/{n}}) \\
&&{}- \bigl( f(W_t) -
f(W_{({2}/{n})\lfloor
{nt}/{2} \rfloor}) \bigr),
\end{eqnarray*}
where $R_0, R_1$ represent the third-order remainder terms in the
Taylor expansion, and can be expressed in integral form as
%
%e9 #&#
\begin{equation}
\label{R0}R_0(W_{{2j}/{n}}) = \frac{1}{2}
\int_{W_{
({2j-1})/{n}}}^{W_{{2j}/{n}}} (W_{{2j}/{n}} -
u)^2 f^{(3)}(u) \,du
\end{equation}
and
%
%e10 #&#
\begin{equation}
\label{R1}R_1(W_{({2j-2})/{n}}) = -\frac{1}{2}
\int_{W_{
({2j-2})/{n}}}^{W_{({2j-1})/{n}}} (W_{({2j-2})/{n}} -
u)^2 f^{(3)}(u) \,du.
\end{equation}
By condition (0) we have for any $T > 0$ that
\[
\lim_{n \to\infty} {\mathbb E} \sup_{0\le t \le T} \bigl\llvert
f(W_t) - f(W_{({2}/{n}) \lfloor{nt}/{2} \rfloor}) \bigr\rrvert= 0,
\]
so this term vanishes uniformly on compacts in probability (ucp), and
may be neglected. Therefore, it is sufficient to work with the term
%
%e11 #&#
\begin{equation}
\label{DefDelta}\Delta_n(t):= f(W_t) -
f(W_0) - \tfrac{1}{2}\Psi_n(t) + R_n(t),
\end{equation}
where
\[
\Psi_n(t) = \sum_{j=1}^{ \lfloor{nt/2} \rfloor}
f''(W_{({2j-1})/{n}}) \bigl( \Delta
W_{{2j}/{n}}^2 - \Delta W_{({2j-1})/{n}}^2 \bigr)
\]
and
\[
R_n(t) = \sum_{j=1}^{ \lfloor{nt/2} \rfloor}
\bigl(R_1(W_{({2j-2})/{n}}) - R_0(W_{
{2j}/{n}})
\bigr).
\]
We will first decompose the term $\Psi_n(t)$, using a Skorohod integral
representation. Using (\ref{Hmap}) and the second Hermite polynomial, one
can write $\Delta W^2(h) = H_2 (\Delta W(h) ) + 1 = \delta^2
(h^{\otimes2}) +1$ for any $h \in\hten$ with $\| h \|_\hten= 1$. It
follows that
\[
\Psi_n(t) = \sum_{j=1}^{ \lfloor{nt/2} \rfloor}
f''(W_{({2j-1})/{n}}) \delta^2 \bigl(
\partial_{({2j-1})/{n}}^{\otimes2} - \partial_{
({2j-2})/{n}}^{\otimes2}
\bigr).
\]
From Lemma~\ref{le2.1}, we have for random variables $u, F$,
\[
F \delta^2(u) = \delta^2 (Fu) + 2\delta\bigl( \langle
DF,u \rangle_\hten\bigr) + \bigl\langle D^2F, u \bigr
\rangle_{\hten^{\otimes2}},
\]
so we can write
\begin{eqnarray*}
\Psi_n(t)& = &\sum_{j=1}^{\lfloor{nt}/{2}\rfloor}
\delta^2 \bigl( f''(W_{({2j-1})/{n}})
\bigl(\partial_{({2j-1})/{n}}^{\otimes2} - \partial_{
({2j-2})/{n}}^{\otimes2}
\bigr) \bigr)
\\
&&{} +\sum_{j=1}^{\lfloor{nt}/{2}\rfloor} 2 \delta
\bigl(f^{(3)}(W_{({2j-1})/{n}}) \bigl\langle\varepsilon_{({2j-1})/{n}},
\partial_{({2j-1})/{n}}^{\otimes2}-\partial_{
({2j-2})/{n}}^{\otimes2}
\bigr\rangle_\hten\bigr)
\\
&&{} +\sum_{j=1}^{\lfloor{nt}/{2}\rfloor} f^{(4)}(W_{
({2j-1})/{n}})
\bigl( \langle\varepsilon_{({2j-1})/{n}}, \partial_{
({2j-1})/{n}}
\rangle_\hten^2\\
&&\hspace*{112pt}{} - \langle\varepsilon_{({2j-1})/{n}},
\partial_{({2j-2})/{n}} \rangle_\hten^2 \bigr)
\\
:\!&=& F_n(t) + B_n(t) + C_n(t).
\end{eqnarray*}
Hence, we have $\Delta_n(t) = f(W_t) - f(W_0) - \frac{1}{2} (
F_n(t) + B_n(t) +C_n(t) ) + R_n(t)$. In the next two lemmas, we
show that the terms $B_n(t), C_n(t)$ and $R_n(t)$ converge to zero in
probability as $n \to\infty$. The proofs of these lemmas are deferred
to Section~\ref{sec6}.

%Lemma 4.4 R_n
%
%le4.4 #&#
\begin{lemma}\label{le4.4}
Let $0 \le r < t \le T$. Using the notation defined above,
\[
{\mathbb E} \bigl[ \bigl(R_n(t) - R_n(r)
\bigr)^2 \bigr] \le C \biggl( \biggl\lfloor\frac{nt}{2} \biggr
\rfloor- \biggl\lfloor\frac{nr}{2} \biggr\rfloor\biggr)n^{-{3/2}}
\]
for some positive constant $C$, which may depend on $T$. It follows
that for any $0 \le t \le T$, $R_n(t)$ converges to zero in probability
as $n \to\infty$.
\end{lemma}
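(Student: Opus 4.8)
\medskip\noindent{\em Plan of proof.}
Write $N := \Ntu - \lfloor\frac{nr}{2}\rfloor$, so that
$$R_n(t) - R_n(r) = \sum_{j=\lfloor nr/2\rfloor+1}^{\Ntu}\rho_j, \qquad \rho_j := R_1\bigl(W_{\frac{2j-2}{n}}\bigr) - R_0\bigl(W_{\frac{2j}{n}}\bigr).$$
The first step is a pointwise bound on $\rho_j$. Performing the change of variables $u = W_{\frac{2j-1}{n}} + \theta\bigl(W_{\frac{2j}{n}} - W_{\frac{2j-1}{n}}\bigr)$, $\theta\in[0,1]$, in \req{R_0} (and the analogous substitution in \req{R_1}) turns each remainder into a third power of a one--step increment times an integrated weight, e.g.
$$R_0\bigl(W_{\tfrac{2j}{n}}\bigr) = \tfrac12\bigl(W_{\tfrac{2j}{n}} - W_{\tfrac{2j-1}{n}}\bigr)^3\int_0^1(1-\theta)^2 f^{(3)}\!\bigl(W_{\tfrac{2j-1}{n}} + \theta(W_{\tfrac{2j}{n}} - W_{\tfrac{2j-1}{n}})\bigr)\,d\theta.$$
By Condition $(0)$ with $\alpha < 2$, together with the boundedness of $s\mapsto{\mathbb E}[W_s^2]$ on $[0,T]$, the weights occurring here are dominated by a nonnegative random variable $Z_j$ with $\sup_{j,n}{\mathbb E}[Z_j^p] < \infty$ for every $p\ge1$. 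Hence
$$|\rho_j|\ \le\ C\,\Gamma_j\,Z_j, \qquad \Gamma_j := \bigl|W_{\tfrac{2j}{n}} - W_{\tfrac{2j-1}{n}}\bigr|^3 + \bigl|W_{\tfrac{2j-1}{n}} - W_{\tfrac{2j-2}{n}}\bigr|^3.$$

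Next, expand ${\mathbb E}\bigl[(R_n(t) - R_n(r))^2\bigr] = \sum_{j,k}{\mathbb E}[\rho_j\rho_k]$ and split the double sum into the range $|j-k|\le2$ and the range $|j-k|\ge3$. For the near--diagonal range, Hölder's inequality separates the increment powers in $\Gamma_j,\Gamma_k$ from the factors $Z_j,Z_k$; since Condition (i) gives ${\mathbb E}\bigl[|W_{\frac{m+1}{n}} - W_{\frac mn}|^{6p}\bigr]\le Cn^{-3p/2}$ for every $p\ge1$, this yields ${\mathbb E}[\Gamma_j\Gamma_kZ_jZ_k]\le Cn^{-3/2}$, and as there are only $O(N)$ pairs with $|j-k|\le2$ this part contributes at most $CNn^{-3/2}$.

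The range $|j-k|\ge3$ is the heart of the proof. There the one--step increments entering $\rho_j$ lie in $[\frac{2j-2}{n},\frac{2j}{n}]$ and those entering $\rho_k$ in $[\frac{2k-2}{n},\frac{2k}{n}]$, intervals separated by more than their common length, so Conditions (ii)--(iv) are available. The underlying mechanism is that, of the three increment factors making up $\Gamma_j$ (and likewise $\Gamma_k$), two are absorbed into their $O(n^{-1/2})$ variances while the third decorrelates across the two blocks: the leading term of Condition (ii) then supplies a factor $|j-k|^{-\alpha}$ with $\alpha>1$, whose series in $k$ converges, and Conditions (iii)--(iv) control the covariances between a one--step increment of one block and the evaluation point $W_{\frac{2k-1}{n}}$ of the other. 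To carry this out one must also decorrelate the non--polynomial weights: the plan is to Taylor--expand the integrated weight to finite order about $W_{\frac{2j-1}{n}}$, writing $\rho_j$ as a finite sum of terms of the form (a power of a local increment)$\,\times f^{(m)}\bigl(W_{\frac{2j-1}{n}}\bigr)$ plus a remainder bounded as above, whereupon each ${\mathbb E}[\rho_j\rho_k]$ reduces to a Gaussian moment that can be evaluated by Wick's theorem (Gaussian integration by parts) and then summed over $\{|j-k|\ge3\}$ using Lemma 4.1, Corollary 4.2 and Conditions (iii)--(iv); this should give $\sum_{j,k:\,|j-k|\ge3}|{\mathbb E}[\rho_j\rho_k]|\le CNn^{-3/2}$, completing the stated estimate. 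The main obstacle is precisely this last step: propagating the covariance decay of the underlying Gaussian increments through the nonlinear weights while retaining enough powers of $n^{-1/2}$ to beat the $O(N)$ number of summands, together with the intricate bookkeeping of which of Conditions (ii)--(iv) governs each pairing in the Wick expansion.

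Finally, taking $r=0$ gives ${\mathbb E}[R_n(t)^2]\le C\,\Ntu\,n^{-3/2}\le Ctn^{-1/2}\to0$, so that $R_n(t)\to0$ in $L^2(\Omega)$, and therefore in probability, as claimed.
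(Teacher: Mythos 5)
Your setup (the change of variables in \req{R_0}--\req{R_1}, the reduction to a sum of cubed one-step increments times integrated weights, and the near-diagonal estimate via H\"older and condition (i)) matches the paper's and is correct. But the heart of the lemma --- the estimate $\sum_{|j-k|\ge 3}|{\mathbb E}[\rho_j\rho_k]|\le CNn^{-3/2}$ --- is not actually proved: you state it as a plan and explicitly flag it as the main obstacle. That is precisely the step where all the work lies, because the crude bound $|\rho_j|\le C\Gamma_jZ_j$ destroys the decorrelation and gives only $N^2n^{-3/2}$, and the route you sketch (Taylor-expand the weight about $W_{\frac{2j-1}{n}}$ and ``reduce to a Gaussian moment evaluated by Wick's theorem'') does not go through as stated: even after that expansion the factors $f^{(m)}(W_{\frac{2j-1}{n}})$ remain nonlinear functionals of the Gaussian field, correlated with the increments of the other block, so ${\mathbb E}[\rho_j\rho_k]$ is not a Gaussian moment and Wick's theorem does not apply to it directly. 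One still needs a systematic integration-by-parts scheme to move the dependence on $f^{(m)}$ out of the way, and that is exactly what is missing.

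The paper supplies this via Malliavin calculus: it writes $\Delta W_{\frac{j}{n}}^3=\delta^3(\partial_{\frac{j}{n}}^{\otimes 3})+3w_j^2\,\delta(\partial_{\frac{j}{n}})$ with $w_j^2=\|\partial_{\frac{j}{n}}\|_\hten^2\le Cn^{-\frac12}$ (the Hermite identity $x^3=H_3(x)+3H_1(x)$), so the cube splits into a third-chaos piece and a first-order piece carrying an extra factor $n^{-\frac12}$. The product of two such Skorohod integrals is then expanded with the multiplication formula \req{multi}, and the duality relationship transfers $D^{6-2r}$ onto the product of weights $G_0(2j)G_0(2k)$; what remains are purely deterministic quantities $\langle\partial_{\frac{2j-1}{n}},\partial_{\frac{2k-1}{n}}\rangle_\hten^r$ and $\langle\varepsilon_\cdot,\partial_\cdot\rangle_\hten$, which Lemma 4.1, Corollary 4.2, Lemma 6.2 and conditions (i)--(iv) control, yielding $C\left(\Ntu-\lfloor\frac{nr}{2}\rfloor\right)n^{-\frac32}$. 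Your plan correctly identifies which covariance conditions must enter and why the counting works, but without this chaos decomposition and the product/duality machinery (or an equivalent explicit Gaussian integration-by-parts bookkeeping, carried out rather than announced), the proof is incomplete. The final deduction that $r=0$ gives ${\mathbb E}[R_n(t)^2]\le Ctn^{-\frac12}\to0$ and hence convergence in probability is fine.
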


%%%%%%Lemma 4.5 B_n, C_n
%
%le4.5 #&#
\begin{lemma}\label{le4.5}
Let $0 \le r < t \le T$. Using the above notation, there
exist constants $C_B, C_C$ such that
\[
{\mathbb E} \bigl[ \bigl(B_n(t) - B_n(r)
\bigr)^2 \bigr] \le C_B \biggl( \biggl\lfloor
\frac{nt}{2} \biggr\rfloor- \biggl\lfloor\frac
{nr}{2} \biggr\rfloor
\biggr)n^{-{3/2}}
\]
and
\[
{\mathbb E} \bigl[ \bigl(C_n(t) - C_n(r)
\bigr)^2 \bigr] \le C_C \biggl( \biggl\lfloor
\frac{nt}{2} \biggr\rfloor- \biggl\lfloor\frac
{nr}{2} \biggr\rfloor
\biggr)n^{-{3/2}}.
\]
It follows that for any $0 \le t \le T$, $B_n(t)$ and $C_n(t)$ converge
to zero in probability as $n \to\infty$.
\end{lemma}
%
%co4.6 #&#
\begin{corollary}\label{co4.6}
Let $Z_n(t):= R_n(t) - \frac{1}{2}B_n(t) - \frac
{1}{2}C_n(t)$. Then given $0 \le t_1 < t <t_2 \le T$, there exists a
positive constant $C$ such that
\[
{\mathbb E} \bigl[ \bigl|Z_n(t) - Z_n(t_1)\bigr| \bigl|Z_n(t_2)
- Z_n(t)\bigr| \bigr] \le C(t_2 - t_1)^{{3/2}}.
\]
\end{corollary}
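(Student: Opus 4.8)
The plan is to reduce the mixed product to the second moment estimates of Lemmas 4.4 and 4.5 by the Cauchy--Schwarz inequality, and then to reconcile the mesh size $1/n$ with the continuous increment $t_2-t_1$ by a short case distinction; this is the only point that needs any thought, since the genuinely technical work is already contained in those two lemmas. First, for $0\le r<t\le T$ I would write
$$Z_n(t)-Z_n(r)=\big(R_n(t)-R_n(r)\big)-\tfrac12\big(B_n(t)-B_n(r)\big)-\tfrac12\big(C_n(t)-C_n(r)\big)$$
and apply the triangle inequality in $L^2(\Omega)$ together with Lemmas 4.4 and 4.5, obtaining
$$ {\mathbb E}\left[(Z_n(t)-Z_n(r))^2\right]\ \le\ C\left(\Ntu-\left\lfloor\tfrac{nr}{2}\right\rfloor\right)n^{-\frac32} $$
for a constant $C$ depending only on $T$ and the constants appearing in those lemmas.

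Next, fix $0\le t_1<t<t_2\le T$ and set $m_1:=\Ntu-\lfloor\tfrac{nt_1}{2}\rfloor\ge0$ and $m_2:=\lfloor\tfrac{nt_2}{2}\rfloor-\Ntu\ge0$, so that $m_1+m_2=\lfloor\tfrac{nt_2}{2}\rfloor-\lfloor\tfrac{nt_1}{2}\rfloor$. By Cauchy--Schwarz and the display above,
$$ {\mathbb E}\left[|Z_n(t)-Z_n(t_1)|\,|Z_n(t_2)-Z_n(t)|\right]\ \le\ \sqrt{{\mathbb E}\left[(Z_n(t)-Z_n(t_1))^2\right]\,{\mathbb E}\left[(Z_n(t_2)-Z_n(t))^2\right]}\ \le\ C\sqrt{m_1m_2}\,n^{-\frac32}. $$
If $m_1=0$ or $m_2=0$ the right-hand side is zero and there is nothing to prove (indeed one of the increments $Z_n(t)-Z_n(t_1)$, $Z_n(t_2)-Z_n(t)$ is then an empty sum). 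Otherwise $m_1,m_2\ge1$, hence $m_1+m_2\ge2$; combined with $m_1+m_2=\lfloor\tfrac{nt_2}{2}\rfloor-\lfloor\tfrac{nt_1}{2}\rfloor\le\tfrac{n(t_2-t_1)}{2}+1$ this forces $n^{-1}\le\tfrac12(t_2-t_1)$. Then, by the arithmetic--geometric mean inequality and $1\le\tfrac{n(t_2-t_1)}{2}$,
$$ \sqrt{m_1m_2}\ \le\ \frac{m_1+m_2}{2}\ \le\ \frac12\left(\frac{n(t_2-t_1)}{2}+1\right)\ \le\ \frac{n(t_2-t_1)}{2}, $$
so that $C\sqrt{m_1m_2}\,n^{-3/2}\le\tfrac{C}{2}(t_2-t_1)\,n^{-1/2}\le\tfrac{C}{2}(t_2-t_1)\big(\tfrac12(t_2-t_1)\big)^{1/2}=\tfrac{C}{2\sqrt2}(t_2-t_1)^{3/2}$, as required.

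The one thing to be careful about is precisely this scale matching: no single estimate covers all ranges of $t_2-t_1$, because for $t_2-t_1$ below the mesh one of the two increments vanishes identically, whereas above the mesh the surplus factor $n^{-1/2}$ is dominated by $(t_2-t_1)^{1/2}$. Apart from that, I expect nothing further is needed. Note that the resulting bilinear second-moment bound, with exponent $\tfrac32>1$, is exactly what is required to apply the standard tightness criterion to the remainder process $Z_n$ in ${\mathbb D}[0,\infty)$, which together with Lemmas 4.4 and 4.5 will give the negligibility of $Z_n$ used in the proof of Theorem 4.3.
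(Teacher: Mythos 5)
Your proof is correct and follows essentially the same route as the paper: the triangle inequality in $L^2$ with Lemmas 4.4 and 4.5, then Cauchy--Schwarz on the mixed product, then conversion of the discrete increment count to $(t_2-t_1)^{3/2}$. The only difference is that you write out explicitly the scale-matching step (the case $m_1=0$ or $m_2=0$, and the inequality $n^{-1}\le\tfrac12(t_2-t_1)$ otherwise) that the paper delegates to a citation of Billingsley, p.~156; your version of that step is complete and accurate.
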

\begin{pf} By Lemmas~\ref{le4.4} and~\ref{le4.5},
\begin{eqnarray*}
{\mathbb E} \bigl[ \bigl(Z_n(t_2) - Z_n(t_1)
\bigr)^2 \bigr] &\le& 3{\mathbb E} \bigl[ \bigl(R_n(t_2)
- R_n(t_1) \bigr)^2 \bigr]\\
&&{} + 2 {\mathbb E}
\bigl[ \bigl(B_n(t_2) - B_n(t_1)
\bigr)^2 \bigr]
\\
&&{} + 2{\mathbb E} \bigl[ \bigl(C_n(t_2) -
C_n(t_1) \bigr)^2 \bigr]
\\
&\le& C \biggl( \biggl\lfloor\frac{nt_2}{2} \biggr\rfloor- \biggl\lfloor
\frac{nt_1}{2} \biggr\rfloor\biggr)n^{-{3/2}}.
\end{eqnarray*}
Then by the Cauchy--Schwarz inequality,
\begin{eqnarray*}
&&
{\mathbb E} \bigl[ \bigl|Z_n(t) - Z_n(t_1)\bigr| \bigl|Z_n(t_2)
- Z_n(t)\bigr| \bigr]\\
&&\qquad\le\bigl( {\mathbb E} \bigl[ \bigl(Z_n(t)
- Z_n(t_1) \bigr)^2 \bigr] {\mathbb E}
\bigl[ \bigl(Z_n(t) - Z_n(t_1)
\bigr)^2 \bigr] \bigr)^{{1/2}}
\\
&&\qquad\le C \biggl( \biggl\lfloor\frac{nt_2}{2} \biggr\rfloor- \biggl\lfloor
\frac{nt_1}{2} \biggr\rfloor\biggr)^{{3/2}}n^{-{3/2}}.
\end{eqnarray*}
This estimate implies the required bound $C(t_2 - t_1)^{{3/2}}$;
see, for example,~\cite{Billingsley}, page~156.
\end{pf}

Next, we will develop a comparable estimate for differences of the form
$F_n(t) - F_n(r)$. In order to prove this estimate, we need a technical
lemma which will be used here and also in Section~\ref{sec6}.

%%%%%%%%Lemma 4.7 D^a f bounded in L^p
%
%le4.7 #&#
\begin{lemma}\label{le4.7}
Suppose $a, b$ are nonnegative integers such that $a + b
\le9$. For fixed $T > 0$ and interval $[t_1, t_2] \subset[0,T]$, let
\[
g_a = \sum_{\ell= \lfloor{nt_1}/{2} \rfloor+1}^{\lfloor
{nt_2}/{2} \rfloor}
f^{(a)}(W_{(2\ell-1)/{n}}) \bigl(\partial
_{(2\ell-1)/{n}}^{\otimes2}
- \partial_{(2\ell-2)/{n}}^{\otimes
2} \bigr).
\]
Then we have for $1 \le p < \infty$
\[
{\mathbb E} \bigl[ \bigl\| D^b g_a \bigr\|^p_{\hten^{\otimes2+b}}
\bigr] \le C \biggl( \biggl\lfloor\frac{nt_2}{2} \biggr\rfloor- \biggl
\lfloor
\frac
{nt_1}{2} \biggr\rfloor\biggr)^{{p/2}}n^{-{p/2}}.
\]
\end{lemma}
\begin{pf} We may assume $t_1 = 0$ with $t_2 \le T$. For each $b$ we
can write
\begin{eqnarray*}
&&{\mathbb E} \bigl[ \bigl(\bigl\| D^b g_a
\bigr\|^2_{\hten
^{\otimes
2+b}} \bigr)^{{p/2}} \bigr]
\\
&&\qquad ={\mathbb E} \Biggl[ \Biggl(\sum_{\ell, m = 1}^{ \lfloor
{nt_2}/{2} \rfloor} f^{(a+b)}(W_{(2\ell
-1)/{n}})f^{(a+b)}(W_{(2m-1)/{n}}) \\
&&\qquad\hspace*{56.2pt}{}\times
\bigl\langle\varepsilon_{(2\ell-1)/{n}}^{\otimes b},
\varepsilon_{(2m-1)/{n}}^{\otimes b} \bigr\rangle_{\hten^{\otimes b}}\\
&&\qquad\hspace*{56.2pt}{}\times
\bigl\langle\partial_{(2\ell-1)/{n}}^{\otimes2} -
\partial_{(2\ell-2)/{n}}^{\otimes2}, \partial_{(2m -1)/{n}}^{\otimes2}
- \partial_{(2m -2)/{n}}^{\otimes2} \bigr\rangle_{\hten^{\otimes2}}
\Biggr)^{{p/2}} \Biggr]
\\
&&\qquad \le{\mathbb E} \Bigl[\sup_{0 \le s \le t} \bigl\llvert f^{(a+b)}(W_s)
\bigr\rrvert^p \Bigr] \Bigl(\sup_{\ell,m} \bigl\llvert\langle
\varepsilon_{(2\ell-1)/{n}}, \varepsilon_{(2m-1)/{n}} \rangle_\hten
\bigr\rrvert^b \Bigr)^{{p/2}} \\
&&\quad\qquad{}\times\Biggl( \sum
_{\ell, m = 1}^{ \lfloor{nt_2}/{2} \rfloor
}\bigl\llvert\bigl\langle
\partial_{(2\ell-1)/{n}}^{\otimes2} - \partial_{(2\ell-2)/{n}}^{\otimes2}, \partial_{(2m -1)/{n}}^{\otimes2} -
\partial_{(2m -2)/{n}}^{\otimes2} \bigr\rangle_{\hten^{\otimes
2}}\bigr\rrvert\Biggr)^{{p/2}}.
\end{eqnarray*}
Recall that condition (0) holds for $f$ and its first 9 derivatives, so
the first two terms are bounded. For the last term, note that by
Corollary~\ref{co4.2} with $r=2$,
\begin{eqnarray*}
&&\sum_{\ell, m = 1}^{ \lfloor{nt_2}/{2}
\rfloor}\bigl\llvert\bigl\langle
\partial_{(2\ell-1)/{n}}^{\otimes2} - \partial_{(2\ell
-2)/{n}}^{\otimes2}, \partial_{(2m -1)/{n}}^{\otimes2} -
\partial_{(2m -2)/{n}}^{\otimes2} \bigr\rangle_{\hten^{\otimes
2}}\bigr\rrvert
\\
&&\qquad = \sum_{\ell, m=1}^{ \lfloor{nt_2}/{2}
\rfloor
}\bigl\llvert
\beta_n(2\ell-1,2m-1)^2 -\beta_n(2
\ell-1,2m-2)^2 \\
&&\qquad\quad\hspace*{27.6pt}{}- \beta_n(2\ell-2,2m-1)^2 +
\beta_n(2\ell-2,2m-2)^2\bigr\rrvert
\\
&&\qquad \le C \biggl\lfloor\frac{nt_2}{2} \biggr\rfloor n^{-1}.
\end{eqnarray*}
\upqed
\end{pf}

%%%%%%Lemma 4.8 F_n is tight
%
%le4.8 #&#
\begin{lemma}\label{le4.8}
For $0 \le s < t \le T$, write
\[
F_n(t) - F_n(s) = \sum_{j=\lfloor{ns}/{2} \rfloor+1}^{
\lfloor{nt/2} \rfloor} \delta^2 \bigl( f''(W_{({2j-1})/{n}})
\bigl(\partial_{ ({2j-1})/{2}}^{\otimes2} -
\partial_{({2j-2})/{n}}^{\otimes2} \bigr) \bigr).
\]
Then given $0\le t_1 < t < t_2 \le T$, there exists a positive constant
$C$ such that
%
%e12 #&#
\begin{equation}
\label{tight} {\mathbb E} \bigl[ \bigl|F_n(t) - F_n(t_1)
\bigr|^2 \bigl|F_n(t_2) - F_n(t)
\bigr|^2 \bigr] \le C (t_2 - t_1)^2.
\end{equation}
\end{lemma}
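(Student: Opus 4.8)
The plan is to estimate the fourth moment $\mathbb E[|F_n(t)-F_n(t_1)|^2|F_n(t_2)-F_n(t)|^2]$ by first bounding each factor in $L^4(\Omega)$ and then invoking a Billingsley-type moment criterion. By Lemma~2.1(3) together with Meyer's inequality \req{Meyer}, each increment $F_n(t)-F_n(s) = \delta^2(g_2)$ with $g_2 = \sum_{j=\lfloor ns/2\rfloor+1}^{\Ntu} f''(W_{\frac{2j-1}{n}})(\partial_{\frac{2j-1}{n}}^{\otimes 2} - \partial_{\frac{2j-2}{n}}^{\otimes 2})$ satisfies
$$
\big\| \delta^2(g_2) \big\|_{L^4(\Omega)} \le c \, \| g_2 \|_{{\mathbb D}^{2,4}(\hten^{\otimes 2})}
= c\Big( {\mathbb E}[\|g_2\|^4_{\hten^{\otimes 2}}] + {\mathbb E}[\|Dg_2\|^4_{\hten^{\otimes 3}}] + {\mathbb E}[\|D^2 g_2\|^4_{\hten^{\otimes 4}}] \Big)^{1/4}.
$$
Now Lemma~4.7, applied with $a=2$ and $b=0,1,2$ (note $a+b\le 4\le 9$) and with $p=4$, gives each of these three terms bounded by $C\big(\Ntu - \lfloor ns/2\rfloor\big)^{2} n^{-2}$. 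Hence $\|F_n(t)-F_n(s)\|_{L^4(\Omega)}^2 \le C\big(\Ntu - \lfloor ns/2\rfloor\big) n^{-1}$.

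Applying this to the two increments over $[t_1,t]$ and $[t,t_2]$ and using the Cauchy–Schwarz inequality,
$$
{\mathbb E}\big[ |F_n(t)-F_n(t_1)|^2 |F_n(t_2)-F_n(t)|^2 \big]
\le \|F_n(t)-F_n(t_1)\|_{L^4(\Omega)}^2 \, \|F_n(t_2)-F_n(t)\|_{L^4(\Omega)}^2
$$
$$
\le C \Big(\Ntu - \Big\lfloor \tfrac{nt_1}{2}\Big\rfloor\Big) n^{-1} \cdot \Big(\Big\lfloor \tfrac{nt_2}{2}\Big\rfloor - \Ntu\Big) n^{-1}
\le C \Big(\Big\lfloor \tfrac{nt_2}{2}\Big\rfloor - \Big\lfloor \tfrac{nt_1}{2}\Big\rfloor\Big)^2 n^{-2},
$$
where the last step uses $xy \le (x+y)^2$ for $x,y\ge 0$. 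Finally, since $\big\lfloor \tfrac{nt_2}{2}\big\rfloor - \big\lfloor \tfrac{nt_1}{2}\big\rfloor \le \tfrac{n}{2}(t_2-t_1) + 1$, when $t_2 - t_1 \ge 1/n$ this is $\le C n (t_2-t_1)$, so $n^{-2}\big(\lfloor \tfrac{nt_2}{2}\rfloor - \lfloor \tfrac{nt_1}{2}\rfloor\big)^2 \le C(t_2-t_1)^2$; when $t_2-t_1 < 1/n$ the quantity $\lfloor \tfrac{nt_2}{2}\rfloor - \lfloor \tfrac{nt_1}{2}\rfloor$ is either $0$ or $1$, and in the latter case one can still dominate by $C(t_2-t_1)^2$ along the lines of the argument in \cite{Billingsley}, p.~156, which is exactly the estimate \req{tight}.

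The only real obstacle is making sure the chain ``Meyer inequality $\Rightarrow$ Lemma~4.7'' produces all three Sobolev-norm pieces with the correct power $(\text{number of blocks})^{p/2} n^{-p/2}$ uniformly in which endpoints appear; this is precisely what Lemma~4.7 was set up to deliver (it is stated for a general difference of Taylor coefficients $g_a$ over an arbitrary subinterval $[t_1,t_2]$ and for all $1\le p<\infty$), so the argument is essentially bookkeeping once that lemma is in hand. A minor point to check is the passage from a bound in terms of $\lfloor nt_2/2\rfloor - \lfloor nt_1/2\rfloor$ to one in terms of $(t_2-t_1)^{2}$, handled by the standard tightness lemma in \cite{Billingsley} as was already done in the proof of Corollary~4.6.
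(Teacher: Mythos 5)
Your proposal is correct and follows essentially the same route as the paper: the paper likewise bounds ${\mathbb E}[(F_n(t_2)-F_n(t_1))^4]$ via the Meyer inequality \req{Meyer} combined with Lemma~4.7 (with $p=4$), then applies H\"older/Cauchy--Schwarz to the product of squared increments and passes from $\left(\left\lfloor \frac{nt_2}{2}\right\rfloor-\left\lfloor \frac{nt_1}{2}\right\rfloor\right)^2 n^{-2}$ to $C(t_2-t_1)^2$ by the standard argument of \cite{Billingsley}, p.~156, exactly as in Corollary~4.6.
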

\begin{pf}
First, for each $n \ge1$, we want to show that there is a $C$ such that
\[
{\mathbb E} \bigl[ \bigl( F_n(t_2) -
F_n(t_1) \bigr)^4 \bigr] \le C \biggl(
\biggl\lfloor\frac{nt_2}{2} \biggr\rfloor- \biggl\lfloor\frac
{nt_1}{2}
\biggr\rfloor\biggr)^2n^{-2}.
\]

By the Meyer inequality (\ref{Meyer}) there exists a constant $c_{2,4}$
such that
\[
{\mathbb E} \bigl\llvert\bigl(\delta^2(u_n)
\bigr)^4 \bigr\rrvert\le c_{2,4} \| u_n
\|^4_{{\mathbb D}^{2,4}(\hten^{\otimes2})},
\]
where in this case
\[
u_n = \sum_{j=\lfloor\frac{nt_1}{2} \rfloor+1}^{\lfloor
{nt_2}/{2} \rfloor}
f''(W_{({2j-1})/{n}}) \bigl( \partial_{(2j-1)/{n}}^{\otimes2}
- \partial_{({2j-2})/{n}}^{\otimes2} \bigr)
\]
and
\[
\|u_n\|_{{\mathbb D}^{2,4}(\hten^{\otimes2})}^4 = {\mathbb E}\|
u_n \|_{\hten^{\otimes2}}^4 +{\mathbb E}\| Du_n
\|_{\hten^{\otimes
3}}^4+{\mathbb E}\bigl\| D^2u_n
\bigr\|_{\hten^{\otimes4}}^4.
\]
From Lemma~\ref{le4.7} we have ${\mathbb E}\| u_n\|^4_{\hten^{\otimes2}},
{\mathbb E}\|Du_n\|^4_{\hten^{\otimes3}}, {\mathbb E}\|D^2u_n \|_{\hten
^{\otimes4}}^4 \le C ( \lfloor\frac{nt_2}{2}
\rfloor- \lfloor\frac{nt_1}{2} \rfloor)^2 n^{-2}$,
and so it follows that
\[
{\mathbb E} \bigl[ \bigl( \delta^2 (u_n)
\bigr)^4 \bigr] \le C \biggl( \biggl\lfloor\frac{nt_2}{2} \biggr
\rfloor- \biggl\lfloor\frac{nt_1}{2} \biggr\rfloor\biggr)^2
n^{-2}.
\]

From this result, given $0 \le t_1 < t < t_2$, it follows from the H\"
{o}lder inequality that
\begin{eqnarray*}
&&
{\mathbb E} \bigl[ \bigl|F_n(t) - F_n(t_1)
\bigr|^2 \bigl|F_n(t_2) - F_n(t)\bigr|^2
\bigr] \\
&&\qquad\le \bigl( {\mathbb E} \bigl[ \bigl|F_n(t) - F_n(t_1)\bigr|^4
\bigr] \bigr)^{{1/2}} \bigl( {\mathbb E} \bigl[ \bigl|F_n(t_2)
- F_n(t)\bigr|^4 \bigr] \bigr)^{{1/2}}
\\
&&\qquad\le C \biggl( \biggl\lfloor\frac{nt_2}{2} \biggr\rfloor-
\biggl\lfloor \frac{nt_1}{2} \biggr\rfloor\biggr)^2n^{-2}.
\end{eqnarray*}
As in Corollary~\ref{co4.6}, this implies the required bound $C(t_2 - t_1)^2$.
\end{pf}

By Corollary~\ref{co4.6} and Lemma~\ref{le4.8}, it follows that $\Delta_n(t) = f(W_t) -
f(W_0) -\frac{1}{2}F_n(t) + Z_n(t)$ is tight, since both sequential
parts $F_n(t), Z_n(t)$ are tight. Further, we have that $Z_n(t)$ tends
to zero in probability, and $F_n(t)$ is in a form suitable for Theorem
\ref{th3.2}. In the next lemma, we show that the conditions of Theorem
\ref{th3.2} are
satisfied by $F_n(t)$ evaluated at a finite set of points.

%%%%%%%%%%%%Lemma 4.9 Thm 3.2 conditions
%
%le4.9 #&#
\begin{lemma}\label{le4.9}
Fix $0=t_0 < t_1 <t_2 < \cdots< t_d$. Set $F_n^i = F_n(t_i) -
F_n(t_{i-1})$ for $i = 1,\ldots, d$, and let $F_n = (F_n^i,\ldots,
F_n^d)$. Then under conditions (0), and~\textup{(i)--(v)}, $F_n$ satisfies\vadjust{\goodbreak}
conditions \textup{(a)} and \textup{(b)} of Theorem~\ref{th3.2}, and so given $W$, $F_n$
converges stably as $n \to\infty$ to a random variable $\xi=
(\xi_1,\ldots, \xi_d)$ with distribution ${\cal N}(0, \Sigma)$, where
$\Sigma$ is a diagonal $d \times d$ matrix with the following entries:
\[
s_i^2 = \int_{t_{i-1}}^{t_i}
f''(W_s)^2\eta(ds),
\]
where $\eta(t) = \eta^+(n) - \eta^-(t)$ is as defined in condition \textup{(v)}.
\end{lemma}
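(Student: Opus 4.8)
The plan is to verify conditions (a) and (b) of Theorem 3.2 for the $\mathbb{R}^d$-valued random variable $F_n = (F_n^1,\dots,F_n^d)$ with $F_n^i = \delta^2(u_n^i)$, where
$$u_n^i = \sum_{j=\lfloor \frac{nt_{i-1}}{2}\rfloor +1}^{\lfloor \frac{nt_i}{2}\rfloor} f''(W_{\frac{2j-1}{n}})\left(\partial_{\frac{2j-1}{n}}^{\otimes 2} - \partial_{\frac{2j-2}{n}}^{\otimes 2}\right),$$
so $q=2$ here. First I would record the boundedness in $L^1(\Omega,\hten)$: this is immediate from Lemma 4.7 with $a=2$, $b=0$, $p=1$. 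For condition (a) with $q=2$, Remark 3.3 reduces matters to three types of scalar products (as spelled out in Remark 3.4): $\langle u_n^i, h_1\otimes h_2\rangle$, $\langle u_n^i, DF_n^j\otimes h\rangle$, and $\langle u_n^i, DF_n^j\otimes DF_n^k\rangle$. The first of these is handled by expanding $u_n^i$ and using that $\langle \partial_{\frac{2j-1}{n}}^{\otimes 2}-\partial_{\frac{2j-2}{n}}^{\otimes 2}, h_1\otimes h_2\rangle_{\hten^{\otimes 2}} = \beta$-type products that are $O(n^{-1})$ summed against $\lfloor nt/2\rfloor$ terms, hence $O(n^{-1/2})\to 0$; here one uses Remark 3.5 to take $h_1,h_2$ in the total set of indicators $\{\varepsilon_s\}$ and applies condition (iv). For the terms involving $DF_n^j$, I would first use Lemma 2.1(2) to compute $DF_n^j = D\delta^2(u_n^j) = \delta^2(Du_n^j) + 2\delta(u_n^j)$, and then reduce the scalar products to sums over the partition that are controlled by conditions (i)--(iv) and Corollary 4.2; each such sum should again carry a net negative power of $n$. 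I expect these estimates to be the main technical obstacle, and they are presumably the content of one of the lemmas deferred to Section 6.

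For condition (b), I would show that $\langle u_n^i, D^2 F_n^j\rangle_{\hten^{\otimes 2}}$ converges in $L^1(\Omega,\hten)$ to $s_{ij}$, where $s_{ij} = 0$ for $i\neq j$ and $s_{ii} = \int_{t_{i-1}}^{t_i} f''(W_s)^2\,\eta(ds)$. Using Lemma 2.1(2) again, $D^2F_n^j = D^2\delta^2(u_n^j) = \delta^2(D^2u_n^j) + 4\delta(Du_n^j) + 2u_n^j$, and as noted in Remark 3.4 the dominant contribution comes from the $2u_n^j$ term, the other two being negligible by the same kind of estimates as in (a). Thus the limit of $\langle u_n^i, D^2F_n^j\rangle_{\hten^{\otimes 2}}$ agrees with that of $2\langle u_n^i, u_n^j\rangle_{\hten^{\otimes 2}}$. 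Expanding this inner product,
$$2\langle u_n^i, u_n^j\rangle_{\hten^{\otimes 2}} = 2\sum_{\ell,m} f''(W_{\frac{2\ell-1}{n}}) f''(W_{\frac{2m-1}{n}})\left(\beta_n(2\ell-1,2m-1)^2 - \beta_n(2\ell-1,2m-2)^2 - \beta_n(2\ell-2,2m-1)^2 + \beta_n(2\ell-2,2m-2)^2\right),$$
with $\ell$ ranging over the block for $t_i$ and $m$ over the block for $t_j$. When $i\neq j$ the index ranges are disjoint, and conditions (ii) and (iii) force this cross term to zero. When $i=j$, the diagonal and near-diagonal terms dominate: writing the sum of four squared $\beta_n$'s as $(\beta_n(2\ell-1,2m-1)^2+\beta_n(2\ell-2,2m-2)^2) - (\beta_n(2\ell-1,2m-2)^2+\beta_n(2\ell-2,2m-1)^2)$ and recognizing these as the increments defining $\eta_n^+$ and $\eta_n^-$ in condition (v), I would localize the $f''$ factors (replacing $f''(W_{\frac{2\ell-1}{n}})f''(W_{\frac{2m-1}{n}})$ by $f''(W_{\frac{2\ell-1}{n}})^2$ up to a negligible error, using continuity of $f''(W_\cdot)$ and the $O(n^{-1})$ decay of off-diagonal $\beta_n^2$ from Corollary 4.2) to obtain convergence of $2\langle u_n^i,u_n^i\rangle$ to $\int_{t_{i-1}}^{t_i} f''(W_s)^2(\eta^+ - \eta^-)(ds) = \int_{t_{i-1}}^{t_i} f''(W_s)^2\,\eta(ds)$. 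Since $\Sigma$ is then diagonal with nonnegative entries (using $\eta^+\ge\eta^-$ pointwise, which follows from nonnegativity of $s_i^2$ as a variance limit), it is nonnegative definite.

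With (a) and (b) verified, Theorem 3.2 applies directly and yields that $F_n$ converges stably, given $W$, to $\mathcal{N}(0,\Sigma)$ with the stated diagonal $\Sigma$. The one genuinely delicate point, which I would isolate as a lemma proved in Section 6, is the collection of $o(1)$ estimates in condition (a) and the negligibility of the lower-order terms $\delta^2(D^2u_n^j)$, $\delta(Du_n^j)$ in $D^2F_n^j$; these require carefully bounding multi-index sums of products of $\beta_n$'s and covariances $\langle\varepsilon_\cdot,\partial_\cdot\rangle$ using the full strength of conditions (i)--(iv) together with Lemma 4.1 and Corollary 4.2. The remaining work — the convergence of the diagonal terms to the $\eta$-integral — is essentially a Riemann-sum argument once condition (v) is in hand.
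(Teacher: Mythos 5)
Your proposal follows essentially the same route as the paper's proof (given in Section 6.3): the same reduction of condition (a) to the three scalar products (a.1)--(a.3), the same expansions of $DF_n^j$ and $D^2F_n^j$ via Lemma 2.1, negligibility of everything except $\left< u_n^i, u_n^j\right>_{\hten^{\otimes 2}}$, disjointness of the index blocks forcing the off-diagonal entries to zero, and a Riemann-sum argument from condition (v) for the diagonal limit. The only differences are cosmetic: the paper phrases the diagonal limit via weak convergence of discrete measures and the Portmanteau theorem rather than your direct localization of $f''(W_{\frac{2\ell-1}{n}})f''(W_{\frac{2m-1}{n}})$ to the diagonal, and both treatments are equally casual about the constant in front of $\left< u_n^i, u_n^i\right>_{\hten^{\otimes 2}}$ coming from the expansion of $D^2F_n^i$.
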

%
%re4.10 #&#
\begin{remark}\label{re4.10} As we will see later, $\eta(t)$ is continuous,
nonnegative and nondecreasing.
\end{remark}

It follows from the structure of $\Sigma$ that, given $W$, $F_n$
converges stably to a $d$-dimensional vector with conditionally
independent components of the form
\[
F_\infty^i = \zeta_i \sqrt{\int
_{t_{i-1}}^{t_i} f''(W_s)^2
\eta(ds)},
\]
where each $\zeta_i \sim{\cal N}(0,1)$. Thus, we may conclude that for
each $i$,
\[
F_n^i \law\int_{t_{i-1}}^{t_i}
f''(W_s) \,dB_s
\]
for a scaled Brownian motion $B = \{B_t, t\ge0\}$ that is independent
of $W_t$, with ${\mathbb E} [ B_t^2 ] = \eta(t)$.

\begin{pf*}{Proof of Theorem~\ref{th4.3}}
To prove Theorem~\ref{th4.3}, it is enough to show that for any finite set of
times $0 = t_0<t_1 < t_2 < \cdots< t_d$, we have
\begin{eqnarray*}
&&
\bigl(\Delta_n(t_1), \Delta_n(t_2)
- \Delta_n(t_1),\ldots, \Delta_n(t_d)-
\Delta_n(t_{d-1}) \bigr) \\
&&\qquad\law\bigl(\Delta(t_1),
\Delta(t_2) - \Delta(t_1),\ldots, \Delta(t_d)-
\Delta(t_{d-1}) \bigr)
\end{eqnarray*}
as $n \to\infty$; and that $\Delta_n(t)$ satisfies the tightness condition
%
%e13 #&#
\begin{equation}
\label{tightness} {\mathbb E} \bigl[ \bigl\llvert\Delta_n(t) -
\Delta_n(t_1)\bigr\rrvert^\gamma\bigl\llvert
\Delta_n(t_2) - \Delta_n(t)\bigr
\rrvert^\gamma\bigr] \le C(t_2-t_1)^\alpha
\end{equation}
for $0\le t_1 < t < t_2 < \infty$, $\gamma> 0$ and $\alpha> 1$.

For $\Delta_n(t) = f(W_t) - f(W_0) - \frac{1}{2}F_n(t) + Z_n(t)$, we
have shown in Lemmas~\ref{le4.4} and~\ref{le4.5} that
\[
Z_n(t) = R_n(t) -\tfrac{1}{2} \bigl(
B_n(t) + C_n(t) \bigr) \stackrel{\cal P} {
\longrightarrow} 0
\]
for each $0 \le t\le T$, and hence $Z_n(t_i) - Z_n(t_{i-1}) \stackrel
{\cal P}{\longrightarrow} 0$ for each $t_i$, $1 \le i \le d$. By Lemma
\ref{le4.9}, the pair $(W, F_n)$ converges in law to $(W, F_\infty)$, where
$F_\infty$ is a $d$-dimensional random vector with conditional Gaussian
law and whose covariance matrix is diagonal with entries
\[
s_i^2 = \int_{t_{i-1}}^{t_i}
f''(W_s)^2 \eta(ds).\vadjust{\goodbreak}
\]
It follows that, conditioned on $W$, each component may be expressed as
an independent Gaussian random variable, equivalent in law to
\[
\int_{t_{i-1}}^{t_i} f''(W_s)
\,dB_s,
\]
where $B=\{B_t, t \ge0\}$ is a scaled Brownian motion independent of
$W$ with ${\mathbb E} [B_t^2 ] = \eta(t)$. Finally, tightness
follows from Lemma~\ref{le4.8} and Corollary~\ref{co4.6}. Theorem~\ref{th4.3} is proved.
\end{pf*}

%%%%%%%%%%%%%%%%%%%%%Section 5
%s5 #&#
\section{Examples}\label{sec5}

%s5.1 #&#
\subsection{Bifractional Brownian motion}\label{sec5.1}
The bifractional Brownian motion is a generalization of fractional
Brownian motion, first introduced by Houdr\'e and Villa~\cite{Houdre}.
It is defined as a centered Gaussian process $B^{H,K} = \{B^{H,K}(t),\break t
\ge0\}$, with covariance defined by
\[
{\mathbb E} \bigl[B_t^{H,K} B^{H,K}_s
\bigr] = \frac{1}{2^K} \bigl( t^{2H} + s^{2H}
\bigr)^K + \frac{1}{2^K}|t-s|^{2HK},
\]
where $H \in(0,1)$, $K\in(0,1]$. (Note that the case $K=1$ corresponds
to fractional Brownian motion with Hurst parameter $H$.) The reader
may refer to~\cite{Lei} and its references for further discussion of
properties.

In this section, we show that the results of Section~\ref{sec4} are valid for
bifractional Brownian motion with parameter values $H$, $K$ such that
$H \le1/2$ and $2HK = 1/2$. In particular, this includes the endpoint
cases $H = 1/4$, $K=1$ studied in~\cite{NoRev}, and $H=1/2, K=1/2$
studied in~\cite{Swanson}.
%
%pr5.1 #&#
\begin{proposition}\label{pr5.1}
Let $ \{B^{H,K}_t, t \ge0 \} $ denote a bifractional Brownian motion.
The covariance conditions \textup{(i)--(iv)} of Section~\ref{sec4} are
satisfied for values of $0 < H \le1/2 $ and $0 < K \le1$ such that $2HK
= 1/2$.
\end{proposition}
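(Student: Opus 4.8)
The plan is to verify conditions (i)--(iv) directly from the explicit covariance
\[
R(s,t) = {\mathbb E}[B^{H,K}_s B^{H,K}_t] = \frac{1}{2^K}\left(s^{2H}+t^{2H}\right)^K + \frac{1}{2^K}|t-s|^{2HK}
\]
under the hypotheses $0<H\le 1/2$, $0<K\le 1$, $2HK=1/2$. Note first that the second term of $R$ is exactly $\frac{1}{2^K}|t-s|^{1/2}$ (since $2HK=1/2$), so it contributes the "$\sqrt{s}$-type" behavior, while the first term $G(s,t):=\frac{1}{2^K}(s^{2H}+t^{2H})^K$ is smooth away from the origin and will be controlled by Taylor expansion / mean value estimates on its partial derivatives. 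The split $R = G + \frac{1}{2^K}|t-s|^{1/2}$ will be used throughout: the singular part is handled by elementary identities for increments of $x\mapsto |x|^{1/2}$, and the smooth part by differentiating $G$.

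First I would treat condition (i). For the increment variance, ${\mathbb E}[(B_t-B_{t-s})^2] = R(t,t) - 2R(t,t-s) + R(t-s,t-s)$. The $|t-s|^{1/2}$ part of $R$ contributes exactly $\frac{1}{2^K}(0 - 2 s^{1/2} + 0)\cdot(-1)$, i.e. a term of order $s^{1/2}$; actually one computes it contributes $\frac{2}{2^K}s^{1/2}$. For the smooth part $G$, I would use that $G(t,t)-2G(t,t-s)+G(t-s,t-s)$ is a second difference and bound it by $s^2 \sup |\partial^2 G|$ over the relevant region, or more simply observe $|G(t,t)-G(t,t-s)| \le C s$ near a fixed $t$ and near $0$ use $2H\le 1$ directly; in all cases the smooth part is $O(s^{2H}) = O(s^{1/2}\cdot s^{1/2-?})$ — one must be slightly careful at $t$ near $0$ but $2HK=1/2$ with $2H\le 1$ forces $K\ge 1/2$, and $(s^{2H}+t^{2H})^K - (t^{2H})^K \le C\, s^{2H\cdot ?}$; the upshot is the $C_1 s^{1/2}$ bound. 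For conditions (ii)--(iv), which all concern covariances of increments $W_t-W_{t-s}$, $W_{r\pm s}-2W_r+W_{r-s}$, etc., with $|t-r|\ge 2s$, the strategy is: the $|t-r|^{1/2}$ part of $R$, being a function of $t-r$ alone and smooth on $|t-r|\ge 2s>0$, produces (via finite differences in both variables, which for a function of $t-r$ become a fourth-order difference, or second order for the mixed terms) contributions of size $s^2 |t-r|^{1/2 - 2} = s^2|t-r|^{-3/2}$ — this is precisely the second term allowed in (ii) and (iii). The smooth part $G$: here I would compute $\partial_s\partial_r G$ and higher mixed derivatives and show they decay like a negative power of $r$ (since $\partial_t^2[(s^{2H}+t^{2H})^K]$ involves $t^{2H-2}$-type factors), yielding the $s^2|t-r|^{-\alpha}(t\wedge r - s)^{-\beta}$ term with an appropriate choice of $\alpha\in(1,3/2]$, $\alpha+\beta=3/2$; for bifractional Brownian motion the natural choice is $\alpha=3/2$, $\beta=0$, since $G$'s mixed second derivative is already summable-order. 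The $\gamma$-estimate at the end of (iv), ${\mathbb E}[W_s(W_t-W_{t-s})]$, is again split: the $|t-r|^{1/2}$ part gives $s\cdot$(difference of $(t-s)^{1/2}$, $(t-2s)^{1/2}$ shifted by $s$ from $s$) which one bounds by $s^{1/2+\gamma}(t-2s)^{-\gamma}$ for suitable small $\gamma$ using concavity of $x^{1/2}$, and the $G$-part is lower order.

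The main obstacle I anticipate is bookkeeping the behavior near the time origin: the smooth part $G(s,t) = \frac{1}{2^K}(s^{2H}+t^{2H})^K$ is not $C^2$ up to $0$ when $2H<1$ (its derivatives blow up like $t^{2H-1}$ near $0$), so the "bound by $\sup$ of second derivative" argument fails when $r-s$ or $t\wedge r - s$ is small. This is exactly why conditions (ii)--(iv) have a separate case for $r<2s$ (handled crudely by Cauchy--Schwarz and (i)) and why the bounds in the main regime carry the weight $(t\wedge r - s)^{-\beta}$: one must match the singularity of $\partial_t G \sim t^{2H-1}$ against the allowed weight, which works because $2H-1 = -( 1 - 2H)$ and $1-2H \le \beta$ can be arranged given $2HK = 1/2$. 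So the real work is: (1) obtain clean estimates $|\partial_s^a \partial_t^b G(s,t)| \le C (s\wedge t)^{2HK - a - b}$-type bounds, then (2) carefully verify that plugging the finite-difference expressions into these gives terms dominated by the allowed right-hand sides with $1<\alpha\le 3/2$, $\alpha+\beta=3/2$, and some $\gamma>0$. The $|t-r|^{1/2}$-part computations, by contrast, are routine: everything reduces to one-dimensional estimates on differences of the concave function $x\mapsto x^{1/2}$ on intervals bounded away from $0$. I would organize the proof as a sequence of sublemmas, one per condition, each beginning with the $R = G + \frac{1}{2^K}|\cdot|^{1/2}$ decomposition.
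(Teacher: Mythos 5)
Your decomposition of $R$ into the smooth ``position'' part $G(s,t)=2^{-K}\left(s^{2H}+t^{2H}\right)^K$ and the ``distance'' part $2^{-K}|t-s|^{1/2}$ is exactly the decomposition the paper uses (there denoted $\varphi$ and $\psi$), and your overall plan --- double-integral/mean-value bounds on the mixed derivatives of $G$, one-dimensional second differences of $x\mapsto x^{1/2}$ for the distance part, and crude Cauchy--Schwarz plus condition (i) in the near-diagonal and near-origin cases --- matches the paper's proof step for step. One correction, though: your parenthetical claim that the ``natural choice'' for the position term is $\alpha=3/2$, $\beta=0$ is wrong. The mixed second derivative satisfies $\partial_t\partial_r G \asymp (t^{2H}+r^{2H})^{K-2}\,t^{2H-1}r^{2H-1}$, which blows up like $r^{2H-1}$ as $r\downarrow 0$ when $H<1/2$ and is therefore not dominated by $|t-r|^{-3/2}$ alone; the weight $(t\wedge r-s)^{-\beta}$ is genuinely needed, and the correct (essentially forced) choice is $\beta=1-2H$, $\alpha=\tfrac12+2H$ --- which is what your own second paragraph arrives at via the constraint $1-2H\le\beta$, so take that as the actual choice and discard the $\beta=0$ remark. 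Note finally that $\alpha=\tfrac12+2H>1$ requires $H>1/4$; at the endpoint $H=1/4$ one has $K=1$, and you must observe that then $G(s,t)=\tfrac12\left(s^{1/2}+t^{1/2}\right)$ is additively separable, so the position term vanishes identically --- the paper makes exactly this observation to dispose of that case, and without it your choice of $(\alpha,\beta)$ would fail the strict inequality $\alpha>1$ there.
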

\begin{pf}
\textit{Condition} (i).
\begin{eqnarray*}
&&
{\mathbb E} \bigl[ \bigl(B_t^{H,K} - B_{t-s}^{H,K}
\bigr)^2 \bigr] \\
&&\qquad= t^{2HK} + \frac{2}{2^K}(t-s)^{2HK}
- \bigl[t^{2H} + (t-s)^{2H} \bigr]^K -
\frac{2}{2^K}s^{2HK}
\\
&&\qquad\le \biggl[ \biggl\llvert\sqrt{t} - \frac{1}{2^K} \bigl(
t^{2H} +(t-s)^{2H} \bigr)^K\biggr\rrvert\\
&&\qquad\hspace*{15.5pt}{} +
\biggl\llvert\sqrt{t-s} - \frac{1}{2^K} \bigl( t^{2H}
+(t-s)^{2H} \bigr)^K\biggr\rrvert+ \frac{1}{2^K}s^{{1/2}}
\biggr]
\\
&&\qquad\le Cs^{{1/2}},
\end{eqnarray*}
where we used the inequality $a^m - b^m \le(a-b)^m$ for $a > b > 0$
and $m<1$.

\textit{Condition} (ii).
%delta-delta
%
\begin{eqnarray*}
&&{\mathbb E} \bigl[ \bigl(B^{H,K}_t - B^{H,K}_{t-s}
\bigr) \bigl(B^{H,K}_r - B^{H,K}_{r-s}
\bigr) \bigr]
\\
&&\qquad = \frac{1}{2^K} \bigl( \bigl[t^{2H} + r^{2H}
\bigr]^K - \bigl[t^{2H} + (r-s)^{2H}
\bigr]^K\\
&&\qquad\quad\hspace*{18.5pt}{} - \bigl[(t-s)^{2H} + r^{2H}
\bigr]^K + \bigl[(t-s)^{2H} + (r-s)^{2H}
\bigr]^K \bigr)
\\
&&\qquad\quad{} +\frac{1}{2^K} \bigl(|t-r+s|^{2HK} - 2 |t-r|^{2HK} +
|t-r-s|^{2HK} \bigr).
\end{eqnarray*}

This can be interpreted as the sum of a position term, $\frac
{1}{2^K}\varphi(t, r, s)$, and a distance term, $\frac{1}{2^K}\psi(t-r,
s)$, where
\begin{eqnarray*}
\varphi(t, r, s) &=& \bigl[t^{2H} + r^{2H}
\bigr]^K - \bigl[t^{2H} + (r-s)^{2H}
\bigr]^K- \bigl[(t-s)^{2H} + r^{2H}
\bigr]^K\\
&&{} + \bigl[(t-s)^{2H} + (r-s)^{2H}
\bigr]^K
\end{eqnarray*}
and
\[
\psi(t-r, s) = |t-r+s|^{2HK} - 2 |t-r|^{2HK} +
|t-r-s|^{2HK}.
\]
We begin with the position term. Note that if $K=1$, then $\varphi
(t,r,s) = 0$, so we may assume $K < 1$ and $H > \frac{1}{4}$. Assume
$0< s \le r \le t$, and let $p:= t-r$. By the fundamental theorem of
calculus, we can write $\varphi(t, t-p, s)$ as
\begin{eqnarray*}
&&
2HK \int_0^s \bigl[ t^{2H} + (t-p-
\xi)^{2H} \bigr]^{K-1} (t-p-\xi)^{2H-1} \\
&&\quad{}- \bigl[
(t-s)^{2H} + (t-p-\xi)^{2H} \bigr]^{K-1} (t-p-
\xi)^{2H-1} \,d\xi
\\
&&\qquad=\int_0^s \int_0^s
4H^2K(1-K) \bigl[(t-\eta)^{2H} + (t-p-\xi
)^{2H} \bigr]^{K-2}\\
&&\qquad\hspace*{38pt}{}\times (t-\eta)^{2H-1}(t-p-
\xi)^{2H-1}\,d\xi\,d\eta
\\
&&\qquad\le4H^2K(1-K)s^2 \bigl[ (t-r)^{2H} +
(r-s)^{2H} \bigr]^{K-2} (t-r)^{2H-1}(r-s)^{2H-1}
\\
&&\qquad\le Cs^2(t-r)^{2HK-2H-1}(r-s)^{2H-1}.
\end{eqnarray*}
This implies condition (ii) for the position term taking $\alpha=
\frac
{1}{2} + 2H >1$ and $\beta= 1-2H$.

Next, consider the distance term $\psi(t-r, s)$. Without loss of
generality, assume $r<t$.
Again using an integral representation, we have
\begin{eqnarray*}
\psi(t-r,s)&=& |t-r+s|^{2HK} - 2|t-r|^{2HK} +
|t-r-s|^{2HK}
\\
&=&\int_0^s 2HK \bigl[ (t-r+
\xi)^{2HK-1} - (t-r-\xi)^{2HK-1} \bigr]\,d\xi
\\
&=&\int_0^s \int_{-\xi}^{\xi}
2HK(2HK-1) [ t-r+\eta]^{2HK-2} \,d\eta\,d\xi
\\
&\le& Cs^2(t-r-s)^{2HK-2} \le Cs^2|t-r|^{-{3/2}},
\end{eqnarray*}
since $|t-r| \ge2s$ implies $(t-r-s)^{-{3/2}} \le2^{{3/2}}|t-r|^{-{3/2}}$.

%Condition (iii) Q(t,r)
\textit{Condition} (iii).
\begin{eqnarray*}
&&\bigl\llvert{\mathbb E} \bigl[ B^{H,K}_t
\bigl(B_{r+s}^{H,K} - 2B_r^{H,K} +
B_{r-s}^{H,K}\bigr) \bigr] \bigr\rrvert
\\
&&\qquad = \frac{1}{2^K}\biggl| \bigl[ t^{2H} + (r+s)^{2H}
\bigr]^K - 2\bigl[ t^{2H} + r^{2H}
\bigr]^K + \bigl[ t^{2H} + (r-s)^{2H}
\bigr]^K
\\
&&\qquad\quad\hspace*{41.5pt}{} - \frac{1}{2^K} \bigl[ |t-r+s|^{2HK} -2 |t-r|^{2HK} +
|t-r-s|^{2HK} \bigr] \biggr|.
\end{eqnarray*}
Take first the term, $\varphi(t,r,s)$. If $r < 2s$, then
\[
\bigl| \bigl[ t^{2H} + (r+s)^{2H} \bigr]^K - 2
\bigl[ t^{2H} + r^{2H} \bigr]^K + \bigl[
t^{2H} + (r-s)^{2H} \bigr]^K\bigr| \le
Cs^{2HK} = Cs^{{1/2}},
\]
based on the inequality $a^K - b^K \le(a-b)^K$ for $a>b>0$ and $K<1$.
Hence, we will assume $r \ge2s$.
If $K=1$, then $H=\frac{1}{4}$, and we have
\begin{eqnarray*}
\llvert\sqrt{r+s}-2\sqrt{r}+\sqrt{r-s}\rrvert&=&\biggl\llvert\int
_0^s \frac{1}{2\sqrt{r+x}}\,dx - \int
_0^s \frac{1}{2\sqrt{r-s+x}}\,dx\biggr\rrvert
\\
&=&\frac{1}{4}\int_0^s \int
_0^s \frac{1}{(r-s+x+y)^{{3/2}}}\,dy\,dx
\\
&\le&\frac{1}{4}s^2(r-s)^{-{3/2}};
\end{eqnarray*}
and if $K < 1$,
\begin{eqnarray*}
&&\bigl\llvert\varphi(t,r,s)\bigr\rrvert
\\[-1pt]
&&\qquad =\biggl\llvert\int_0^s 2HK
\bigl[t^{2H} + (r+x)^{2H}\bigr]^{K-1}(r+x)^{2H-1}\,dx\\[-1pt]
&&\qquad\hspace*{12.5pt}{}
- \int_0^s 2HK\bigl[t^{2H} +
(r-s+x)^{2H}\bigr]^{K-1}(r-s+x)^{2H-1}\,dx\biggr
\rrvert
\\[-1pt]
&&\qquad \le\biggl\llvert\int_0^s \int
_0^s 4H^2K(K-1)\\[-1pt]
&&\qquad\quad\hspace*{30.5pt}{}\times
\bigl[t^{2H} + (r-s+x+y)^{2H}\bigr]^{K-2}(r-s+x+y)^{4H-2}\,dy\,dx
\biggr\rrvert
\\[-1pt]
&&\qquad\quad{} + \biggl\llvert\int_0^s \int
_0^s 2H(2H-1)K\bigl[t^{2H} +
(r-s+x+y)^{2H}\bigr]^{K-1}\\[-1pt]
&&\qquad\quad\hspace*{117.5pt}{}\times (r-s+x+y)^{2H-2}\,dy\,dx
\biggr\rrvert
\\[-1pt]
&&\qquad \le4H^2K(1-K)s^2(r-s)^{2HK-2} +
2H(1-2H)Ks^2(r-s)^{2HK-2} \\[-1pt]
&&\qquad\le Cs^2(r-s)^{-{3/2}}.
\end{eqnarray*}
This bound for $\varphi(t,r,s)$ also holds in the case $|t-r| < 2s$, so
the bound of $Cs^{{1/2}}$ is valid for this case.
Next for the second term. Note that if $|t-r| < 2s$, then
\[
\biggl\llvert\frac{1}{2^K} \bigl(|t-r+s|^{2HK}-2|t-r|^{2HK}
+ |t-r-s|^{2HK} \bigr)\biggr\rrvert\le2(3s)^{2HK} \le
Cs^{{1/2}}.
\]
If $|t-r|\ge2s$, then we have
\begin{eqnarray*}
&&
\bigl\llvert\sqrt{|t-r|+s} - 2\sqrt{|t-r|} + \sqrt{|t-r|-s}\bigr\rrvert\\[-1pt]
&&\qquad= \biggl
\llvert
\int_0^s \frac{1}{2\sqrt{|t-r|+x}}\,dx - \int
_0^s \frac
{1}{2\sqrt{|t-r|-s+x}}\,dx\biggr\rrvert
\\[-1pt]
&&\qquad=\int_0^s \int_0^s
\frac{1}{(|t-r|-s+x+y)^{3/2}}\,dy\,dx
\\[-1pt]
&&\qquad\le\frac{s^2}{4(|t-r|-s)^{{3/2}}} \le\frac
{s^2}{2|t-r|^{{3/2}}},
\end{eqnarray*}
using the inequality $\frac{1}{|t-r|-s} \le\frac{2}{|t-r|}$ for $|t-r|
\ge2s$. This bound for $\psi(t-r,s)$ holds even in the case $r < 2s$,
so the bound of $Cs^{{1/2}}$ when $r < 2s$ is verified as well.

%Condition iv Symmetric Diff
\textit{Condition} (iv).
For the first part, we have for all $t \ge s$,
\begin{eqnarray*}
&&\bigl\llvert{\mathbb E} \bigl[ B_t^{H,K}
\bigl(B_{t+s}^{H,K} - B_{t-s}^{H,K} \bigr)
\bigr] \bigr\rrvert\\[-1pt]
&&\qquad=\biggl\llvert\frac{1}{2^K} \bigl[ t^{2H} +
(t+s)^{2H} \bigr]^K - \frac
{1}{2^K} \bigl[
t^{2H} + (t-s)^{2H} \bigr]^K\biggr\rrvert.
\end{eqnarray*}
This is bounded by $Cs^{{1/2}}$ if $t < 2s$. On the other hand,
if $t \ge2s$,
\begin{eqnarray*}
&&
\biggl\llvert\frac{1}{2^K} \bigl[ t^{2H} + (t+s)^{2H}
\bigr]^K - \frac
{1}{2^K} \bigl[ t^{2H} +
(t-s)^{2H} \bigr]^K\biggr\rrvert\\[-1pt]
&&\qquad = \biggl\llvert
\frac{1}{2^K} \int_{-s}^s 2HK \bigl[
t^{2H} + (t+x)^{2H} \bigr]^{K-1}(t+x)^{2H-1}\,dx
\biggr\rrvert
\\[-1pt]
&&\qquad\le Cs(t-s)^{2HK-1}\\[-1pt]
&&\qquad=Cs(t-s)^{-{1/2}}.
\end{eqnarray*}
For $0 < s \le r \le T$ with $t \ge2s$ and $|t-r| \ge2s$,
\begin{eqnarray*}
&&
\bigl\llvert{\mathbb E} \bigl[ B_r^{H,K}
\bigl(B_{t+s}^{H,K} - B_{t-s}^{H,K} \bigr)
\bigr] \bigr\rrvert\\[-1pt]
&&\qquad \le\biggl\llvert\frac{1}{2^K} \bigl[
r^{2H} + (t+s)^{2H} \bigr]^K -
\frac
{1}{2^K} \bigl[ r^{2H} + (t-s)^{2H}
\bigr]^K\biggr\rrvert
\\[-1pt]
&&\qquad\quad{} +\biggl\llvert\frac{1}{2^K}|r-t+s|^{2HK} -
\frac
{1}{2^K}|r-t-s|^{2HK}\biggr\rrvert
\\[-1pt]
&&\qquad\le Cs(t-s)^{-{1/2}} + Cs|r-t|^{-{1/2}}.
\end{eqnarray*}
If $t < 2s$ or $|t-r| < 2s$, then we have an upper bound of $Cs^{
{1/2}}$ by condition (i) and Cauchy--Schwarz.

For the third bound, if $t > 2s$,
\begin{eqnarray*}
&&
\bigl\llvert{\mathbb E} \bigl[ B_s^{H,K}
\bigl(B_{t}^{H,K} - B_{t-s}^{H,K} \bigr)
\bigr]\bigr\rrvert\\[-2pt]
&&\qquad\le\biggl\llvert\frac{1}{2^K} \bigl[ s^{2H}
+ t^{2H} \bigr]^K - \frac
{1}{2^K} \bigl[
s^{2H} + (t-s)^{2H} \bigr]^K\biggr\rrvert
\\[-2pt]
&&\qquad\quad{} +\biggl\llvert\frac{1}{2^K}(t-s)^{2HK} - \frac
{1}{2^K}(t-2s)^{2HK}
\biggr\rrvert
\\[-2pt]
&&\qquad\le\frac{2}{2^K}\int_0^s HK \bigl[
s^{2H}+(t-s+x)^{2H} \bigr]^K(t-s+x)^{2H-1}\,dx
\\[-2pt]
&&\qquad\quad{} + \frac{1}{2^{K+1}}\int_0^s
(t-2s+x)^{-{1/2}}\,dx
\\[-2pt]
&&\qquad\le Cs(t-2s)^{-{1/2}} = Cs^{{1/2}+\gamma
}(t-2s)^{-\gamma}
\end{eqnarray*}
for $\gamma= \frac{1}{2}$.\vspace*{-2pt}
\end{pf}
%
%pr5.2 #&#
\begin{proposition}\label{pr5.2}
%%%%%Prop 5.2 bifractional condition (iv)
Let $B^{H,K}$ be a bifractional Brownian motion with parameters $H \le
1/2$ and $HK = 1/4$. Then condition \textup{(v)} of Section~\ref{sec4} holds, with the
functions $\eta^+(t) = 2C_K^+t$ and $\eta^-(t) = 2C_K^-t$, where
\begin{eqnarray*}
C_K^+ &=& \frac{1}{4^K} \Biggl( 2+\sum
_{m=1}^\infty(\sqrt{2m+1}-2\sqrt{2m}+\sqrt{2m-1}
)^2 \Biggr),
\\[-2pt]
C_K^- &=& \frac{(2-\sqrt{2})^2}{2^{2K}}+\frac{1}{4^K}\sum
_{m=1}^\infty(\sqrt{2m+2}-2\sqrt{2m+1}+\sqrt{2m}
)^2.\vspace*{-2pt}
\end{eqnarray*}
\end{proposition}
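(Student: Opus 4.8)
The plan is to split each increment covariance $\beta_n(j,k)$ into the contribution of the ``distance'' term $-2^{-K}|t-s|^{2HK}=-2^{-K}|t-s|^{1/2}$ and that of the ``position'' term $2^{-K}(t^{2H}+s^{2H})^{K}$ of the covariance, and to show that only the distance term survives in the limits $\eta^{\pm}(t)$. Write $\beta_n=\beta_n^{d}+\beta_n^{p}$ accordingly. The second mixed difference of $|t-s|^{1/2}$ is computed directly and gives
$$\beta_n^{d}(j,k)=-\frac{1}{2^{K}\sqrt{n}}\,\rho(j-k),\qquad \rho(m):=|m+1|^{1/2}-2|m|^{1/2}+|m-1|^{1/2},$$
so $\rho$ is the discrete second derivative of $x\mapsto|x|^{1/2}$; note $\rho(0)=2$, $\rho(\pm1)=\sqrt2-2$, $\rho(2m)^{2}=(\sqrt{2m+1}-2\sqrt{2m}+\sqrt{2m-1})^{2}$ and $\rho(2m+1)^{2}=(\sqrt{2m+2}-2\sqrt{2m+1}+\sqrt{2m})^{2}$ for $m\ge1$, and $|\rho(m)|\sim\tfrac14|m|^{-3/2}$ as $|m|\to\infty$, so $\sum_{m\in\mathbb Z}\rho(m)^{2}<\infty$ and $\sum_{m\in\mathbb Z}|m|\,\rho(m)^{2}<\infty$. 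For the position term, $\beta_n^{p}\equiv0$ when $K=1$; when $K<1$ (so $H>1/4$), using $\partial_t\partial_s(t^{2H}+s^{2H})^{K}=4H^{2}K(K-1)(t^{2H}+s^{2H})^{K-2}t^{2H-1}s^{2H-1}$ together with the bounds already established in the proof of Proposition 5.1, one obtains, for $1\le j\le k$,
$$|\beta_n^{p}(j,k)|\le C\,n^{-1/2}\,j^{2H-1}\,k^{2HK-2H-1}=C\,n^{-1/2}\,j^{2H-1}\,k^{-1/2-2H},$$
the symmetric bound for $k\le j$, and a separate estimate when $j=0$ (where $t^{2H-1}$ must be integrated down to the origin).

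Next I would show the position term is asymptotically negligible in all four sums defining $\eta_n^{\pm}$. Since $H>1/4$ we have $4H-2>-1$, so $\sum_{j=1}^{k}j^{4H-2}\le Ck^{4H-1}$, whence
$$\sum_{1\le j\le k\le N}\beta_n^{p}(j,k)^{2}\le C\,n^{-1}\sum_{k=1}^{N}k^{-1-4H}\sum_{j=1}^{k}j^{4H-2}\le C\,n^{-1}\sum_{k\ge1}k^{-2}=O(n^{-1}),$$
and therefore $\sum_{j,k}\beta_n^{p}(j,k)^{2}\to0$, and a fortiori so does each restricted sum in $\eta_n^{+}$ and $\eta_n^{-}$. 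Since $\sum_{j,k}(\beta_n^{d})^{2}$ remains bounded (it converges, as shown below), the Cauchy--Schwarz inequality kills the cross terms, $\sum_{j,k}|\beta_n^{d}\beta_n^{p}|\le\big(\sum_{j,k}(\beta_n^{d})^{2}\big)^{1/2}\big(\sum_{j,k}(\beta_n^{p})^{2}\big)^{1/2}\to0$, so each $\eta_n^{\pm}(t)$ equals the corresponding sum of squares of $\beta_n^{d}$'s up to an error going to zero.

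It remains to evaluate the distance part. Putting $\ell=j-k$ and counting multiplicities, $\sum_{j,k=1}^{N}\rho(2\ell+c)^{2}=\sum_{|\ell|<N}(N-|\ell|)\rho(2\ell+c)^{2}=N\sum_{\ell\in\mathbb Z}\rho(2\ell+c)^{2}+O(1)$, the $O(1)$ coming from $\sum_{\ell}|\ell|\rho(2\ell+c)^{2}<\infty$. For $\eta_n^{+}$, both $\beta_n(2j-1,2k-1)$ and $\beta_n(2j-2,2k-2)$ have distance part $-2^{-K}n^{-1/2}\rho(2(j-k))$, so with $N=\lfloor nt/2\rfloor$ the distance contribution is $\tfrac{2}{4^{K}n}\sum_{j,k=1}^{N}\rho(2(j-k))^{2}\to\tfrac{t}{4^{K}}\big(\rho(0)^{2}+2\sum_{m\ge1}\rho(2m)^{2}\big)=2C_K^{+}t$. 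For $\eta_n^{-}$, the distance parts of $\beta_n(2j-2,2k-1)$ and $\beta_n(2j-1,2k-2)$ are $-2^{-K}n^{-1/2}\rho(2(j-k)-1)$ and $-2^{-K}n^{-1/2}\rho(2(j-k)+1)$; using $\sum_{\ell\in\mathbb Z}\rho(2\ell-1)^{2}=\sum_{\ell\in\mathbb Z}\rho(2\ell+1)^{2}=2\sum_{m\ge0}\rho(2m+1)^{2}$, the distance contribution converges to $\tfrac{t}{4^{K}}\cdot2\cdot2\sum_{m\ge0}\rho(2m+1)^{2}$, and isolating the $m=0$ term $\rho(1)^{2}=(2-\sqrt2)^{2}$ this is $2C_K^{-}t$. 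The functions $\eta^{\pm}(t)=2C_K^{\pm}t$ exist, are nonnegative and nondecreasing, so condition (v) holds with them.

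The delicate point is the second step, showing that the position term leaves no trace in the limits. This requires the pointwise bound on $\beta_n^{p}(j,k)$ with precisely the right powers of $j$ and $k$ so that, since both indices run up to $O(n)$, the double sum of squares is $O(n^{-1})$ rather than $O(1)$; the identity $2HK=1/2$ is what makes the exponents come out summable; and the boundary indices $j$ or $k$ equal to $0$ need a separate, slightly fussy treatment since there $t^{2H-1}$ is singular at the origin (though still integrable, as $2H-1>-1$).
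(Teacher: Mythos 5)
Your strategy coincides with the paper's: split $\beta_n$ into the ``position'' and ``distance'' parts of the bifractional covariance, show the position part contributes nothing to $\eta^{\pm}$, and evaluate the distance part by counting the multiplicity of each offset $\ell=j-k$. Your handling of the position term is actually cleaner than the paper's: by placing the exponent $-\frac12-2H$ on the larger index you get $\sum_{j,k}(\beta_n^{p})^2=O(n^{-1})$ uniformly for all $H\in(1/4,1/2]$, with no case split between $H<1/2$ and $H=1/2$, and you make explicit both the Cauchy--Schwarz elimination of the cross terms and the boundary index $0$ (which does occur in $\eta_n^{\pm}$ through $\beta_n(2j-2,\cdot)$ with $j=1$ and which the paper passes over). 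Your $\eta^{+}$ computation is correct and reproduces $2C_K^{+}t$.

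The problem is the last step for $\eta^{-}$. With your own (correct) counting, $\sum_{j,k=1}^{N}\rho(2(j-k)\pm1)^2=N\sum_{\ell\in\mathbb Z}\rho(2\ell\pm1)^2+O(1)$, so the distance contribution to $\eta_n^{-}$ is $\frac{N}{4^{K}n}\cdot4\sum_{m\ge0}\rho(2m+1)^2\to\frac{2t}{4^{K}}\sum_{m\ge0}\rho(2m+1)^2$; your displayed limit $\frac{t}{4^{K}}\cdot2\cdot2\sum_{m\ge0}\rho(2m+1)^2$ has dropped the factor $N/n\to t/2$ that you did apply in the $\eta^{+}$ line. More importantly, even the corrected limit $\frac{2t}{4^{K}}\bigl((2-\sqrt2)^2+\sum_{m\ge1}(\sqrt{2m+2}-2\sqrt{2m+1}+\sqrt{2m})^2\bigr)$ is not equal to $2C_K^{-}t$ as stated: the stated constant gives the term $(2-\sqrt2)^2$ the weight $\frac{1}{2\cdot4^{K}}$ per unit $t$, half of what your count produces, so the asserted identification ``this is $2C_K^{-}t$'' does not go through. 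The discrepancy is traceable to the paper's own derivation, which replaces the off-diagonal part of $\sum_{j,k}\psi(2j-2k+1,1)^2$ by twice the sum over $k<j$; the summand is not symmetric under $j\leftrightarrow k$, and the band $k=j+1$ (argument $-1$, contributing $(2-\sqrt2)^2$ again) is exactly what your $\mathbb Z$-indexed count retains and that step discards. As written, your proof therefore does not establish the Proposition's formula for $C_K^{-}$: you must either justify discarding that band (you cannot) or conclude that the method yields $\frac{(2-\sqrt2)^2}{2^{2K}}$ in place of $\frac{(2-\sqrt2)^2}{2^{2K+1}}$ as the first term of $C_K^{-}$.
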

\begin{pf}
As in Proposition~\ref{pr5.1}, we use the decomposition
\begin{eqnarray*}
\beta_n (j,k) &=& \frac{1}{2^K}\varphi\biggl(\frac{j}{n},
\frac{k}{n}, \frac{1}{n} \biggr) + \frac{1}{2^K}\psi\biggl(
\frac{j-k}{n}, \frac
{1}{n} \biggr) \\[-2pt]
&=& 2^{-K}n^{-{1/2}}
\varphi(j,k,1) +2^{-K}n^{-{1/2}}\psi(j-k,1).
\end{eqnarray*}
The first task is to show that
%
%e14 #&#
\begin{equation}
\label{phito0} \lim_{n \to\infty} \sum
_{j,k=1}^{\lfloor nt \rfloor} n^{-1} \varphi(j,k,1)^2
=0.\vspace*{-2pt}
\end{equation}

\textit{Proof of} (\ref{phito0}). We consider two cases, based on the
value of $H$. First, assume $H < \frac{1}{2}$. Then
\begin{eqnarray*}
\varphi(j,k,1) &=& \bigl[ (j+1)^{2H} + (k+1)^{2H}
\bigr]^K - \bigl[ (j+1)^{2H} + k^{2H}
\bigr]^K
\\[-2pt]
&&{} - \bigl[ j^{2H} + (k+1)^{2H} \bigr]^K +
\bigl[ j^{2H} + k^{2H} \bigr]^K
\\[-2pt]
&=& \int_0^1 2HK \bigl[ (j+1)^{2H}
+ (k+x)^{2H} \bigr]^{K-1}(k+x)^{2H-1} \,dx
\\[-2pt]
&&{} -\int_0^1 2HK \bigl[ j^{2H} +
(k+x)^{2H} \bigr]^{K-1}(k+x)^{2H-1} \,dx
\\[-2pt]
&=& \int_0^1\int_0^1
{4H^2K(1-K)} \bigl[ (j+y)^{2H} + (k+x)^{2H}
\bigr]^{K-2}\\[-2pt]
&&\hspace*{28.1pt}{}\times(k+x)^{2H-1}(j+y)^{2H-1}\,dy\,dx
\\[-2pt]
&\le& Ck^{2HK-2H-1}j^{2H-1} = Ck^{-{1/2} - 2H}j^{2H-1}.
\end{eqnarray*}
With this bound, it follows that
\begin{eqnarray*}
\frac{1}{n}\sum_{j,k=1}^{\lfloor nt \rfloor}
\varphi(j,k,1)^2 &\le& \frac
{C}{n}\sum
_{j=1}^{\lfloor nt \rfloor}j^{4H-2}\sum
_{k=1}^\infty k^{-1-4H}
\\[-2pt]
&\le&\frac{C}{n}\lfloor nt \rfloor^{4H-1} \\[-2pt]
&\le&
Ctn^{4H-2},
\end{eqnarray*}
which tends to zero as $n \to\infty$ because $H < \frac{1}{2}$.

Next, we have the case $H = \frac{1}{2}$. Note that this implies $K =
\frac{1}{2}$, and we have
\begin{eqnarray*}
\bigl\llvert\varphi(j,k,1)\bigr\rrvert&=& \llvert\sqrt{j+k+2} - 2\sqrt{j+k+1}+
\sqrt{j+k}\rrvert\\[-2pt]
&\le& C(j+k)^{-{3/2}}.
\end{eqnarray*}
So with this bound,
\begin{eqnarray*}
\sum_{j,k=1}^{\lfloor nt \rfloor} n^{-1}
\varphi(j,k,1)^2 &\le&\frac
{C}{n}\sum
_{j,k=1}^{\lfloor nt \rfloor} (j+k)^{-3}
\\[-2pt]
&\le&\frac{C}{n}\sum_{j=1}^{\lfloor nt \rfloor}
\sum_{m=j+1}^\infty m^{-3} \\[-2pt]
&\le&
\frac{C}{n}\sum_{j=1}^{\lfloor nt \rfloor}
j^{-2},
\end{eqnarray*}
which tends to zero as $n \to\infty$ because $j^{-2}$ is summable.
Hence, (\ref{phito0}) is proved.

From (\ref{phito0}), it follows that %for sums of this form we can
%write,
%&=\lim_{n \to\infty} \frac{1}{n}\sum_{j,k=1}^{\lfloor nt \rfloor}
%where we can neglect the middle term by Cauchy-Schwarz. Hence,
to investigate the limit behavior of $\eta_n^+(t),\eta_n^-(t)$, it is
enough to consider
\[
\frac{1}{n}\sum_{j,k=1}^{ \lfloor{nt/2} \rfloor}
\psi(2j-2k,1)^2 + \psi(2j-2k,1)^2 = \frac{2}{n}\sum
_{j,k=1}^{ \lfloor{nt/2} \rfloor}\psi(2j-2k,1)^2\vadjust{\goodbreak}
\]
and
\begin{eqnarray*}
&&
\frac{1}{n}\sum_{j,k=1}^{ \lfloor{nt/2} \rfloor}
\psi(2j-2k+1,1)^2 + \psi(2j-2k-1,1)^2\\
&&\qquad =
\frac{2}{n}\sum_{j,k=1}^{ \lfloor{nt/2} \rfloor}
\psi(2j-2k+1,1)^2;
\end{eqnarray*}
since the sums of $\psi(2j-2k+1,1)^2$ and $\psi(2j-2k-1,1)^2$ are equal
by symmetry. We start with
\begin{eqnarray*}
&&\frac{1}{n}\sum_{j,k=1}^{ \lfloor{nt/2} \rfloor}
\psi(2j-2k,1)^2
\\
&&\qquad = \frac{1}{4^Kn}\sum_{j,k=1}^{ \lfloor{nt/2} \rfloor} \bigl(
\sqrt{|2j-2k+1|} - 2\sqrt{|2j-2k|} + \sqrt{|2j-2k-1|} \bigr)^2
\\
&&\qquad = \frac{1}{4^Kn}\sum_{j=1}^{ \lfloor{nt/2} \rfloor} 4 \\
&&\qquad\quad{} +
\frac{2}{4^Kn}\sum_{j=1}^{ \lfloor{nt/2} \rfloor}\sum
_{k=1}^{j-1} (\sqrt{2j-2k+1}-2\sqrt{2j-2k} +
\sqrt{2j-2k-1} )^2
\\
&&\qquad = \frac{4 \lfloor{nt}/{2} \rfloor}{4^Kn} + \frac{2}{4^Kn}\sum
_{j=1}^{ \lfloor{nt/2} \rfloor}
\sum_{m=1}^{j-1} (\sqrt{2m+1}-2\sqrt{2m} +
\sqrt{2m-1} )^2
\\
&&\qquad = \frac{4 \lfloor{nt}/{2} \rfloor}{4^Kn} + \frac{2}{4^Kn}\sum
_{j=1}^{ \lfloor{nt/2} \rfloor}
\sum_{m=1}^{\infty} (\sqrt{2m+1}-2\sqrt{2m} +
\sqrt{2m-1} )^2
\\
&&\qquad\quad{} - \frac{2}{4^Kn}\sum_{j=1}^{ \lfloor{nt/2} \rfloor}\sum
_{m=j}^{\infty} (\sqrt{2m+1}-2\sqrt{2m} +
\sqrt{2m-1} )^2,
\end{eqnarray*}
where the last term tends to zero since
\[
\sum_{m=j}^{\infty} (\sqrt{2m+1}-2\sqrt{2m} +
\sqrt{2m-1} )^2 \le\sum_{m=j}^{\infty}(2m-1)^{-3}
\le C(2j-1)^{-2}
\]
and
\[
\frac{C}{n}\sum_{j=1}^{ \lfloor{nt/2} \rfloor}
(2j-1)^{-2} \longrightarrow0
\]
as $n \to\infty$. We therefore conclude that
\begin{eqnarray*}
\eta^+(t) &=& \lim_{n\to\infty}\sum_{j,k=1}^{
\lfloor{nt}/{2} \rfloor
}
\bigl(\beta_n(2j-1,2k-1)^2 + \beta_n(2j-2,2k-2)^2
\bigr)
\\
& = &\lim_{n\to\infty} \frac{2}{n}\sum_{j,k=1}^{ \lfloor{nt/2} \rfloor}
\psi(2j-2k,1)^2 = 2C_K^+t,
\end{eqnarray*}
where
\[
C_K^+ %&= \lim_{n\to\infty}\frac{4\Ntu}{4^Kn} + \frac{2\Ntu}{4^Kn}
= \frac{1}{4^K} \Biggl( 2 + \sum
_{m=1}^\infty(\sqrt{2m+1}-2\sqrt{2m}+
\sqrt{2m-1} )^2 \Biggr).
\]
For the other term,
\begin{eqnarray*}
&&\frac{1}{n}\sum_{j,k=1}^{ \lfloor{nt/2} \rfloor}
\psi(2j-2k+1,1)^2
\\
&&\qquad = \frac{1}{4^Kn}\sum_{j=1}^{ \lfloor{nt/2} \rfloor} (2-
\sqrt{2})^2\\
&&\qquad\quad{} +\frac
{2}{4^Kn}\sum_{j=1}^{ \lfloor{nt/2} \rfloor}
\sum_{k=1}^{j-1} (\sqrt{2j-2k+2}-2\sqrt
{2j-2k+1}-\sqrt{2j-2k} )^2.
\end{eqnarray*}
Hence, by a similar computation,
\[
\eta^-(t) = \lim_{n\to\infty}\sum_{j,k=1}^{ \lfloor{nt/2} \rfloor}
\beta_n(2j-1,2k-2)^2 + \beta_n(2j-2,2k-1)^2
= 2C_K^-t,
\]
where
\[
C_K^- = \frac{(2-\sqrt{2})^2}{2^{2K}}+\frac{1}{4^K}\sum
_{m=1}^\infty(\sqrt{2m+2}-2\sqrt{2m+1}+\sqrt{2m}
)^2.
\]
\upqed
\end{pf}

As a concluding remark, it is easy to show that $C_K^+ > C_K^-$, and in
general we have $\eta^+(t) \ge\eta^-(t)$.

%s5.2 #&#
\subsection{A Gaussian process with differentiable covariance function}\label{sec5.2}
Consider the following class of Gaussian processes. Let $\{F_t, 0 \le t
\le T\}$ be a mean-zero Gaussian process with covariance defined by
%
%e15 #&#
\begin{equation}
\label{CovFdef} {\mathbb E} [ F_r F_t ] =
r\phi\biggl(\frac
{t}{r} \biggr),\qquad t \ge r,
\end{equation}
where $\phi\dvtx  [1, \infty) \to{\mathbb R}$ is twice-differentiable on
$(1,\infty)$ and satisfies the following:
\begin{longlist}[($\phi.1$)]
\item[($\phi.1$)] $\|\phi\|_\infty:= {\sup_{x\ge1}}|\phi(x)| \le c_{\phi,0} <
\infty$.
\item[($\phi.2$)] For $1 < x < \infty$,
\[
\bigl| \phi'(x) \bigr| \le\frac{c_{\phi,1}}{\sqrt{x-1}}.
\]
\item[($\phi.3$)] For $1 < x < \infty$,
\[
\bigl| \phi''(x) \bigr| \le c_{\phi,2}x^{-{1/2}}(x-1)^{-{3/2}},
\]
%
%$$\left| \phi(1+s) - \phi(1) \right| \le c_{\phi,3}s^{{1/2}},$$
\end{longlist}
where $c_{\phi,j}$, $j = 0,1,2$ are nonnegative constants.

%%%%%%%%%%%%%%%%%phi satisfies (i) - (iv)
%
%pr5.3 #&#
\begin{proposition}\label{pr5.3}
The process $\{ F_t, 0\le t\le T\}$ described above satisfies
conditions \textup{(i)--(iv)} of Section~\ref{sec4}.
\end{proposition}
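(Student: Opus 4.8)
The plan is to verify conditions (i)--(iv) directly from the explicit covariance $R(r,t)=r\,\phi(t/r)$, valid for $t\ge r$, by expanding differences of $R$ through the fundamental theorem of calculus and then invoking the three hypotheses on $\phi,\phi',\phi''$. The computational backbone is a pair of identities: if $0<r\le t$ and we set, for $w$ near $r$ with $w\le t$, $g(w)=w\,\phi(t/w)={\mathbb E}[F_w F_t]$, then differentiation gives $g'(w)=\phi(t/w)-(t/w)\phi'(t/w)$ and, after simplification, $g''(w)=(t^2/w^3)\,\phi''(t/w)$; symmetrically, for $v\ge t$, $h(v)=t\,\phi(v/t)={\mathbb E}[F_t F_v]$ satisfies $h'(v)=\phi'(v/t)$ and $h''(v)=t^{-1}\phi''(v/t)$. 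These convert second-order increments of $F$ into iterated integrals of $\phi''$, to which ($\phi$.3) applies, and first-order increments into single integrals of $\phi'$, handled by ($\phi$.2); when an argument of $\phi$ is far from $1$, one uses instead the boundedness ($\phi$.1).

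For condition (i) one computes ${\mathbb E}[(F_t-F_{t-s})^2]=s\phi(1)-2(t-s)\big[\phi(t/(t-s))-\phi(1)\big]$, and since $|\phi(t/(t-s))-\phi(1)|\le 2c_{\phi,1}\sqrt{s/(t-s)}$ by ($\phi$.2), this is at most $s\phi(1)+4c_{\phi,1}\sqrt{s(t-s)}\le Cs^{1/2}$. For condition (ii), assuming $r<t$, the identities above give the clean formula
\[
{\mathbb E}\!\left[(F_t-F_{t-s})(F_r-F_{r-s})\right]=-\int_0^s\!\!\int_0^s \phi''\!\left(\frac{t-s+\xi}{r-\eta}\right)\frac{t-s+\xi}{(r-\eta)^2}\,d\eta\,d\xi ,
\]
and inserting $|\phi''(x)|\le c_{\phi,2}\,x^{-1/2}(x-1)^{-3/2}$ makes the Jacobian powers of $(r-\eta)$ cancel exactly, leaving $c_{\phi,2}\int_0^s\!\int_0^s (t-s+\xi)^{1/2}\big((t-r)-s+\xi+\eta\big)^{-3/2}\,d\eta\,d\xi$, which is $\le Cs^2|t-r|^{-3/2}$ since $(t-r)-s+\xi+\eta\ge(t-r)/2$ whenever $|t-r|\ge 2s$; this already gives (ii).

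Conditions (iii) and (iv) follow the same scheme, split into a non-degenerate regime — in which the quantities among $r$, $t$, $|t-r|$ appearing in the case hypothesis are $\ge 2s$ — and a degenerate one. In the non-degenerate regime one uses the $\phi''$- or $\phi'$-integral representations and exploits elementary inequalities such as $t\le 2\max(r,|t-r|)$ and $t\le 2(r-s)$ to trade a spurious power of $t$ for the required powers of $r-s$ and $|t-r|$; for example, the second difference in $r$ of condition (iii) is, via the $g''$-identity, a double integral of $(t^2/w^3)\phi''(t/w)$, bounded by $Ct^{3/2}(r-s)^{-1}s^2|t-r|^{-3/2}$, which is $\le Cs^2\big((r-s)^{-3/2}+|t-r|^{-3/2}\big)$ after separating the cases $|t-r|\le r$ and $|t-r|>r$. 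In the degenerate regime ($r<2s$, or $|t-r|<2s$, or $t<2s$) one argues crudely: bound terms $R(r,v)$ with $v=O(s)$ simply by $v\|\phi\|_\infty$, or apply Cauchy-Schwarz against the already-proved condition (i) — writing $F_t=F_r+(F_t-F_r)$ when $t$ is near $r$ and using that the second difference $F_{r+s}-2F_r+F_{r-s}$ has $L^2$-norm $O(s^{1/4})$, which itself comes from the $\phi(\cdot)-\phi(1)$ expansion and ($\phi$.1)--($\phi$.2). For the last bound of (iv), $|{\mathbb E}[F_s(F_t-F_{t-s})]|=\big|\int_0^s \phi'\big((t-s+\xi)/s\big)\,d\xi\big|\le c_{\phi,1}\int_0^s \sqrt{s}\,(t-2s+\xi)^{-1/2}\,d\xi\le c_{\phi,1}s^{3/2}(t-2s)^{-1/2}$, which has the required form $Cs^{1/2+\gamma}(t-2s)^{-\gamma}$ with $\gamma=1/2$.

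The main obstacle is the bookkeeping in the degenerate and mixed cases, together with two traps to avoid. First, one must not apply the fundamental theorem of calculus with ($\phi$.2) or ($\phi$.3) over an interval $[x_1,x_2]$ whose endpoints lie far from $1$: since $\int_1^X u^{-1/2}\,du$ grows like $X^{1/2}$, that estimate is useless as soon as $t/r$ or $r/t$ is large, and such contributions must be controlled using only $\|\phi\|_\infty$. Second, when estimating $\sqrt{x_2}-\sqrt{x_1}$ one should use the sharp bound $\sqrt{x_2}-\sqrt{x_1}\le (x_2-x_1)/(2\sqrt{x_1})$ rather than the lossy $\sqrt{x_2-x_1}$. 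With these precautions each sub-case reduces to elementary inequalities among $r$, $r-s$, $t$ and $|t-r|$ on the relevant region, and the verification, though long, is routine.
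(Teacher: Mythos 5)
Your proposal is correct and follows essentially the same route as the paper: direct verification of (i)--(iv) from $R(r,t)=r\,\phi(t/r)$, converting first and second covariance differences into integrals of $\phi'$ and $\phi''$ (the paper uses the mean value theorem with suprema where you use explicit iterated integrals with the Jacobian cancellation, but the resulting estimates are identical), with the same split between the regime $r,\,|t-r|\ge 2s$ and the degenerate regimes handled via $\|\phi\|_\infty$ and condition (i). The two ``traps'' you flag are precisely the points where the paper also takes care, e.g.\ the identity $\tfrac{t}{t-s}-\tfrac{t+s}{t}=\tfrac{s^2}{t(t-s)}$ that produces the cancellation needed for the first bound of condition (iv).
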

\begin{pf}
\textit{Condition} (i). By conditions $(\phi.1)$ and $(\phi.2)$,
\begin{eqnarray*}
{\mathbb E} \bigl[ (F_t - F_{t-s} )^2 \bigr]&=&t
\phi(1) + (t-s)\phi(1)-2(t-s)\phi\biggl(1+\frac{s}{t-s} \biggr)
\\
&\le&2(t-s)\biggl\llvert\phi\biggl(1+\frac{s}{t-s} \biggr)-\phi
(1)\biggr
\rrvert+ s\bigl\llvert\phi(1)\bigr\rrvert
\\
&\le&2(t-s)\biggl\llvert\int_1^{1+{s}/({t-s})}
\phi'(x)\,dx\biggr\rrvert+ s\| \phi\|_\infty
\\
&\le&2(t-s)\int_1^{1+{s}/({t-s})} \frac{c_{\phi,1}}{\sqrt{x-1}}\,dx + s
\|\phi\|_\infty
\\
&\le& Cs^{{1/2}}\sqrt{t-s}+ s\|\phi\|_\infty
\\
&\le& Cs^{{1/2}},
\end{eqnarray*}
where the constant $C$ depends on $\max\{ \sqrt{T}, \|\phi\|_\infty\}$.

\textit{Condition} (ii). For $2s \le r \le t-2s$ we have by the
mean value theorem
\begin{eqnarray*}
&&
\bigl\llvert{\mathbb E} [ F_tF_r -
F_{t-s}F_r - F_t F_{r-s} +
F_{t-s}F_{r-s} ] \bigr\rrvert\\
&&\qquad= \biggl\llvert r \biggl[
\phi\biggl(\frac{t}{r} \biggr) - \phi\biggl(\frac
{t-s}{r} \biggr)
\biggr] - (r-s) \biggl[ \phi\biggl(\frac
{t}{r-s} \biggr) - \phi\biggl(
\frac{t-s}{r-s} \biggr) \biggr]\biggr\rrvert
\\
&&\qquad\le s\sup_{ [({t-s})/{r}, {t}/({r-s}) ]} \bigl\llvert\phi''(x)
\bigr\rrvert\biggl( \frac{t}{r-s} - \frac{t-s}{r} \biggr)
\\
&&\qquad\le c_{\phi,2}s \biggl(\frac{t-s}{r} \biggr)^{-{1/2}} \biggl(
\frac
{t-s}{r} -1 \biggr)^{-{3/2}} \biggl(\frac{ts}{r(r-s)} \biggr)
\\
&&\qquad\le\frac{C\sqrt{T}s^2}{(t-r)^{{3/2}}} = C\sqrt{T}s^2
|t-r|^{-{3/2}}.
\end{eqnarray*}

\textit{Condition} (iii). By symmetry we can assume $r\le t$.
Consider the following cases: First, suppose $2s \le r \le t-2s$. Then
we have
\begin{eqnarray*}
&&
\bigl\llvert{\mathbb E} \bigl[ F_t (F_{r+s} -
2F_r +F_{r-s}) \bigr] \bigr\rrvert\\
&&\qquad= \biggl\llvert(r+s)
\phi\biggl(\frac{t}{r+s} \biggr) - 2r\phi\biggl(\frac
{t}{r} \biggr)
+ (r-s)\phi\biggl(\frac{t}{r-s} \biggr)\biggr\rrvert
\\
&&\qquad= \biggl\llvert(r+s) \biggl[\phi\biggl(\frac{t}{r+s} \biggr) -\phi
\biggl(\frac
{t}{r} \biggr) \biggr] - (r-s) \biggl[\phi\biggl(
\frac{t}{r} \biggr) -\phi\biggl(\frac{t}{r-s} \biggr) \biggr]\biggr
\rrvert
\\
&&\qquad\le\frac{st}{r} \sup_{ [ {t}/({r+s}), {t}/({r-s})
]}\bigl\llvert\phi''(x)
\bigr\rrvert\biggl( \frac{t}{r-s} - \frac
{t}{r+s} \biggr)
\\
&&\qquad\le\frac{2s^2t^2c_{\phi,2}}{r(r-s)(r+s)} \biggl(\frac{r+s}{t} \biggr
)^{{1/2}} \biggl(
\frac{r+s}{t-r-s} \biggr)^{{3/2}}
\\
&&\qquad\le\frac{Cs^2t^{{3/2}}}{r(t-r)^{{3/2}}}.
\end{eqnarray*}
There are two possibilities, depending on the value of $r$. If $r \ge
\frac{t}{2}$, then $\frac{t}{r} \le2$, and we have a bound of
\[
Cs^2 \biggl(\frac{t}{r} \biggr) \biggl(\frac{\sqrt{T}}{(t-r)^{{3/2}}}
\biggr) \le2C\sqrt{T}s^2|t-r|^{-{3/2}}.
\]
On the other hand, if $r < \frac{t}{2}$, then $\frac{t}{t-r} \le2$ and
$r< t-r$. Then the bound is
\[
Cs^2 \biggl(\frac{t}{t-r} \biggr) \biggl(\frac{\sqrt{T}}{r\sqrt{t-r}}
\biggr) \le{2C\sqrt{T}s^2} \bigl[ (r-s)^{-{3/2}} +
|t-r|^{-{3/2}} \bigr].
\]

For the case $|t-r| < 2s$, assume that $t = r+ks$ for some $0 \le k <
2$. Then
\begin{eqnarray*}
&&\bigl\llvert{\mathbb E} \bigl[ F_t (F_{r+s}-2F_r+F_{r-s}
) \bigr]\bigr\rrvert
\\
&&\qquad =\biggl\llvert\bigl(t \wedge(r_s) \bigr)\phi\biggl(
\frac{t\vee
(r+s)}{t\wedge(r+s)} \biggr)  - 2 r \phi\biggl(\frac{t}{r} \biggr) + (r-s)
\phi\biggl(\frac{t}{r-s} \biggr)\biggr\rrvert
\\
&&\qquad =\biggl\llvert\bigl(t \wedge(r_s) \bigr)\phi\biggl(
\frac{t\vee
(r+s)}{t\wedge(r+s)} \biggr) -(r+s)\phi(1)\\
&&\qquad\hspace*{12pt}{} - 2 r \phi\biggl(\frac
{t}{r}
\biggr) +2r\phi(1) + (r-s)\phi\biggl(\frac{t}{r-s} \biggr)- (r-s)\phi(1)
\biggr\rrvert
\\
&&\qquad \le3(r+s)\biggl\llvert\phi\biggl( 1 + \frac{(k+1)s}{r-s} \biggr) -
\phi(1)
\biggr\rrvert\\
&&\qquad\le3(r+s)\biggl\llvert\int_1^{1+{(k+1)s}/({r-s})}
\phi'(x)\,dx\biggr\rrvert
\\
&&\qquad \le3(r+s) \int_1^{1+{(k+1)s}/({r-s})} \frac{c_{\phi,1}}{\sqrt{x-1}}
\,dx \\
&&\qquad\le C\sqrt{T}s^{{1/2}}.
\end{eqnarray*}

For the last case, note that if $t\wedge r < 2s$, then we have an upper
bound of $Cs^{{1/2}}$, since ${\mathbb E} [ F_s F_t ]
\le s\|\phi\|_\infty$.

\textit{Condition} (iv). Take first the bound for ${\mathbb
E} [F_t(F_{t+s}-F_{t-s}) ]$. Note that if $t < 2s$, then an
upper bound of $Cs^{{1/2}}$ is clear, so we will assume $t \ge
2s$. We have
\begin{eqnarray*}
&&
\bigl\llvert{\mathbb E} [ F_tF_{t+s} -
F_tF_{t-s} ] \bigr\rrvert\\
&&\qquad = \biggl\llvert t \phi\biggl(
\frac{t+s}{t} \biggr) - (t-s)\phi\biggl( \frac{t}{t-s} \biggr)\biggr
\rrvert
\\
&&\qquad\le(t-s)\sup_{ [({t+s})/{t}, {t}/({t-s}) ]} \bigl\llvert\phi
'(x)\bigr\rrvert
\biggl\llvert\frac{t+s}{t} -\frac{t}{t-s}\biggr\rrvert+ s\biggl\llvert
\phi\biggl(\frac{t+s}{t} \biggr)\biggr\rrvert
\\
&&\qquad\le c_{\phi,1}\frac{s^2}{t}\sqrt{\frac{t}{t+s}} \sqrt{
\frac
{t}{s}} + c_{\phi,0}s\frac{\sqrt{T}}{\sqrt{t-s}}
\\
&&\qquad\le Cs\sqrt{T}(t-s)^{-{1/2}}.
\end{eqnarray*}
For the case $r \neq t$, first assume $r \le t-2s$. By condition $(\phi.2)$,
\begin{eqnarray*}
\bigl\llvert{\mathbb E} [ F_rF_{t+s} -
F_rF_{t-s} ]\bigr\rrvert&=& \biggl\llvert r\phi\biggl(
\frac{t+s}{r} \biggr) - r\phi\biggl(\frac
{t-s}{r} \biggr)\biggr\rrvert\\
&\le&2s\sup_{[({t-s})/{r},({t+s})/{r}]}\bigl|\phi'(x)\bigr|
\\
&\le&\frac{2s\sqrt{r}c_{\phi,1}}{\sqrt{t-r-s}} \le\frac{C\sqrt
{T}s}{\sqrt{t-r}}.
\end{eqnarray*}
If $r \ge t+2s$, then
\begin{eqnarray*}
\bigl\llvert{\mathbb E} [ F_rF_{t+s} -
F_rF_{t-s} ]\bigr\rrvert&=& \biggl\llvert(t+s)\phi
\biggl(\frac{r}{t+s} \biggr) - (t-s)\phi\biggl(\frac
{r}{t-s} \biggr)
\biggr\rrvert
\\
&\le& t\int_0^{2s} \biggl\llvert
\phi' \biggl(\frac{r}{t-s+x} \biggr)\biggr\rrvert\,dx + 2s\|\phi
\|_\infty
\\
&\le&\frac{2stc_{\phi,1}\sqrt{t+s}}{\sqrt{r-t}} +
\frac{2sc_{\phi,0}\sqrt{T}}{\sqrt{t-s}}
\\
&\le& Cs(r-t)^{-{1/2}} + Cs(t-s)^{-{1/2}}.
\end{eqnarray*}
For the case $t < 2s$ or $|r-t| < 2s$, the bound follows from condition
(i) and Cauchy--Schwarz.

For the third part of condition (iv), we have for $t > 2s$,
\begin{eqnarray*}
{\mathbb E} [ F_sF_t - F_sF_{t-s}]
&=& s\phi\biggl(\frac{t}{s} \biggr) - s \phi\biggl(\frac{t-s}{s}
\biggr)\\
&\le& s \sup_{ [ ({t-s})/{s},{t/s} ]}\bigl|\phi'(x)\bigr| \biggl(\frac{t}{s}-
\frac{t-s}{s} \biggr)
\le\frac{c_{\phi,1}s}{\sqrt{({t-s})/{s}-1}}\\
&\le& Cs^{{3/2}}(t-2s)^{-{1/2}}
= Cs^{{1/2}+\gamma}(t-2s)^{-\gamma},
\end{eqnarray*}
where $\gamma= \frac{1}{2}$.
\end{pf}

%%%%%Phi converges
%
%pr5.4 #&#
\begin{proposition}\label{pr5.4} Suppose $\phi(x)$ satisfies conditions $(\phi.1)$,
$(\phi.3)$, and in addition, $\phi(x)$ satisfies
\[
\mbox{$(\phi.4)$:\quad}  \phi'(x) = \frac{\kappa}{\sqrt{x-1}} + \frac{\psi
(x)}{\sqrt{x}},
\]
where $\kappa\in{\mathbb R}$ and $\psi\dvtx (1,\infty) \to{\mathbb R}$ is
a bounded differentiable function satisfying $|\psi'(1+x)| \le C_\psi
x^{-{1/2}}$ for some positive constant $C_\psi$. Then condition
\textup{(v)} of Section~\ref{sec4} is satisfied, with $\eta^+(t) = C_\beta^+ t^2$, and
$\eta^-(t) = C^-_\beta t^2$ for positive constants $C_\beta^+,
C_\beta^-$.
\end{proposition}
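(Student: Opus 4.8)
The plan is to estimate $\beta_n(j,k)$ explicitly and then carry out the summation. Because the covariance $R(r,t)=(r\wedge t)\,\phi\!\big(\tfrac{r\vee t}{r\wedge t}\big)$ is homogeneous of degree one, we have $\beta_n(j,k)=\tfrac1n\,b(j,k)$, where $b(j,k):=R(j{+}1,k{+}1)-R(j{+}1,k)-R(j,k{+}1)+R(j,k)$ is independent of $n$ and symmetric in its two arguments. Hence, writing $N=\lfloor nt/2\rfloor$, $\eta_n^+(t)=\tfrac1{n^2}\sum_{j,k=1}^{N}\big(b(2j{-}1,2k{-}1)^2+b(2j{-}2,2k{-}2)^2\big)$ and similarly for $\eta_n^-(t)$ with the mixed-parity indices; since $N^2/n^2\to t^2/4$, it is enough to show that these double sums, normalized by $N^{-2}$ instead of $n^{-2}$, converge to constants $L^\pm$, and then $\eta^\pm(t)=\tfrac14 L^\pm t^2$. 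For $j>k$, telescoping in each index and substituting $x=u/(k{+}1)$, $x=u/k$ gives
\[
b(j,k)=\int_j^{j+1}\Big[\phi'\!\big(\tfrac u{k+1}\big)-\phi'\!\big(\tfrac uk\big)\Big]\,du,
\]
while on the diagonal $b(m,m)=(2m{+}1)\phi(1)-2m\,\phi(1{+}\tfrac1m)$.

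Next I would substitute hypothesis $(\phi.4)$, namely $\phi'(x)=\kappa(x{-}1)^{-1/2}+\psi(x)x^{-1/2}$, into these expressions. For $j>k$ and $m:=j-k$, the singular part of $\phi'$ contributes the fully explicit term $\kappa A(j,k)$, where
\[
A(j,k):=2\sqrt{k{+}1}\big(\sqrt m-\sqrt{m{-}1}\big)-2\sqrt k\big(\sqrt{m{+}1}-\sqrt m\big)=\sqrt k\,\theta(m)+O\!\big((km)^{-1/2}\big),
\]
with $\theta(m):=2\big(2\sqrt m-\sqrt{m{+}1}-\sqrt{m{-}1}\big)=O(m^{-3/2})$, and the $\psi$-part contributes a remainder $\rho(j,k)$ which, using the boundedness of $\psi$ together with $|\psi'(1{+}x)|\le C_\psi x^{-1/2}$ (so that $|\psi'(u/v)|\le C_\psi\sqrt{v/(u-v)}$), satisfies $|\rho(j,k)|\le C(jk)^{-1/2}+C\sqrt j\,(k\sqrt m)^{-1}$. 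On the diagonal, the same substitution and $\int_1^{1+1/m}(x{-}1)^{-1/2}\,dx=2m^{-1/2}$ give $b(m,m)=-4\kappa\sqrt m+O(1)$, hence $b(m,m)^2=16\kappa^2 m+O(\sqrt m)$.

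With these pointwise estimates the remainder and every error term become negligible after division by $n^2$: the contributions of $\rho^2$, of the cross terms $A\rho$ (by Cauchy--Schwarz against $\tfrac1{n^2}\sum A^2=O(1)$), of the $O((km)^{-1/2})$ and $O(\sqrt m)$ errors, and of the boundary index pairs containing a $0$ (controlled via $|\beta_n(0,k)|\le Cn^{-1}(k{-}1)^{-1/2}$ and its analogues) are each $O\!\big(n^{-2}[\,(\log n)^2+n\,]\big)\to0$. What survives is, first, the diagonal sum $\tfrac1{n^2}\sum_{j\le N}\big(16\kappa^2(2j{-}1)+16\kappa^2(2j{-}2)\big)\to 8\kappa^2 t^2$; and, second, the near-diagonal sum, which after replacing $A$ by its leading part $\sqrt k\,\theta(m)$ equals $\tfrac{2\kappa^2}{n^2}\sum_{k\le N}(4k{-}3)\sum_{m=1}^{N-k}\theta(2m)^2\to\kappa^2 t^2\sum_{m\ge1}\theta(2m)^2$ (the tail $\sum_{m>N-k}$ again negligible). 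Adding the two gives $\eta^+(t)=C_\beta^+ t^2$ with $C_\beta^+=\kappa^2\big(8+\sum_{m\ge1}\theta(2m)^2\big)>0$. The computation of $\eta^-(t)$ is identical except that the relevant index pairs differ by an odd integer, so $b$ is evaluated at odd differences and $\theta$ at odd arguments; this yields $\eta^-(t)=C_\beta^- t^2$ with an analogous strictly positive constant. Nonnegativity and monotonicity of $\eta^\pm$ are then immediate, so Condition (v) is satisfied.

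The main obstacle is precisely this error analysis: the bounds on $\rho$ and on the subleading part of $A$ decay only like $1/m$ or $1/k$ along certain directions, so the corresponding single sums over $m$ or $k$ are only logarithmically divergent, and one must exploit the genuinely two-dimensional structure of the double sums (together with the factor $n^{-2}$ set against $N\sim nt/2$) to conclude that they vanish in the limit. A secondary technical point is the behavior of the integral representation when $m=1$, where $\phi'$ is integrated up to its singularity at $1$, and the handling of the finitely many index pairs with an entry equal to $0$; both are controlled by the integrability of $(x{-}1)^{-1/2}$ near $1$.
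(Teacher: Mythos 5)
Your proposal is correct and follows essentially the same route as the paper: the integral representation of $\beta_n(j,k)$ via $\phi'$, the splitting according to $(\phi.4)$ into the singular $\kappa(x-1)^{-1/2}$ part (which yields the explicit leading term $\sqrt{k}\,\theta(m)$ plus a negligible $(\sqrt{k+1}-\sqrt{k})$-type correction) and the $\psi$-part (shown negligible from the bounds on $\psi$ and $\psi'$), together with a separate diagonal computation; your limiting constants agree with the paper's $C_\beta^\pm$. The only cosmetic difference is that you factor out the $1/n$ scaling by homogeneity at the outset, whereas the paper performs the equivalent change of variables inside the computation.
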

%
%re5.5 #&#
\begin{remark}\label{re5.5} Observe that condition $(\phi.4)$ implies $(\phi.2)$,
but not $(\phi.3)$.
\end{remark}
\begin{pf*}{Proof of Proposition~\ref{pr5.4}}
We want to show
%
%e16 #&#
%e17 #&#
%e18 #&#
\begin{eqnarray}
\label{etaodd} \sum_{j,k=1}^{ \lfloor{nt/2} \rfloor}
\beta_n(2j-1,2k-1)^2 &\longrightarrow& C_{\beta,1}
t^2;
\\
\label{etavar} \sum_{j,k=1}^{ \lfloor{nt/2} \rfloor}
\beta_n(2j-2,2k-2)^2 &\longrightarrow& C_{\beta,2}
t^2;
\\
\label{etacross} \sum_{j,k=1}^{ \lfloor{nt/2} \rfloor}
\beta_n(2j-1,2k-2)^2 &\longrightarrow& C_{\beta,3}
t^2,
\end{eqnarray}
so that $C_\beta^+ = C_{\beta,1}+C_{\beta,2}$ and $C_\beta^- =
2C_{\beta,3}$. We will show computations for (\ref{etaodd}), with the
others being similar. As in Proposition~\ref{pr5.2},
\begin{eqnarray*}
\sum_{j,k=1}^{ \lfloor{nt/2} \rfloor} \beta_n(2j-1,2k-1)^2
&=& \sum_{j=1}^{ \lfloor{nt/2} \rfloor} \beta_n(2j-1,2j-1)^2\\
&&{} + 2\sum_{j=1}^{ \lfloor{nt/2} \rfloor}\sum
_{k=1}^{j-1}\beta_n(2j-1,2k-1)^2,
\end{eqnarray*}
so it is enough to show
%
%e19 #&#
\begin{equation}
\label{Betaless}\lim_{n \to\infty} \sum
_{j=1}^{ \lfloor{nt/2} \rfloor} \sum_{k=1}^{j-1}
\beta_n(2j-1, 2k-1)^2 = C_1 t^2
\end{equation}
and
%
%e20 #&#
\begin{equation}
\label{Betaeq}\lim_{n \to\infty} \sum
_{j=1}^{ \lfloor{nt/2} \rfloor} \beta_n(2j-1,
2j-1)^2 = C_2 t^2.
\end{equation}

\textit{Proof of} (\ref{Betaless}). For $1 \le k \le j-1$, we have
\begin{eqnarray*}
\beta_n(2j-1,2k-1) &=& \frac{2k}{n} \biggl( \phi\biggl(
\frac
{2j}{2k} \biggr) - \phi\biggl(\frac{2j-1}{2k} \biggr) \biggr)\\
&&{} -
\frac
{2k-1}{n} \biggl( \phi\biggl(\frac{2j}{2k-1} \biggr) - \phi\biggl(
\frac
{2j-1}{2k-1} \biggr) \biggr)
\\
&=& \frac{2k}{n}\int_{({2j-1})/({2k})}^{{2j}/({2k})}
\phi'(x)\,dx \\
&&{}- \frac{2k-1}{n}\int_{({2j-1})/({2k-1})}^{{2j}/({2k-1})}
\phi'(x)\,dx.
\end{eqnarray*}
Using the change of index $j = k+m$ and a change of variable for the
two integrals, this becomes
%
%e21 #&#
\begin{eqnarray}
\label{Iphi1} \beta_n(2j-1,2k-1) &=& \frac{1}{n}
\int_{2m-1}^{2m} \phi' \biggl( 1+
\frac
{y}{2k} \biggr)\,dy \nonumber\\[-8pt]\\[-8pt]
&&{}- \frac{1}{n}\int_{2m}^{2m+1}
\phi' \biggl( 1+\frac
{y}{2k-1} \biggr)\,dy.\nonumber
\end{eqnarray}

With the decomposition of $(\phi.4)$, we will address (\ref{Iphi1}) in
two parts. Using the first term, we have
\begin{eqnarray*}
&&\frac{\kappa}{n}\int_{2m-1}^{2m} \sqrt{
\frac{2k}{y}}\,dy - \frac
{\kappa
}{n}\int_{2m}^{2m+1}
\sqrt{\frac{2k-1}{y}}\,dy
\\
&&\qquad=\frac{2\kappa}{n} \bigl[ \sqrt{2k} ( \sqrt{2m}-\sqrt{2m-1} ) - \sqrt
{2k-1} (
\sqrt{2m+1}-\sqrt{2m} ) \bigr].
\end{eqnarray*}
We are interested in the sum
%
%e22 #&#
\begin{equation}
\label{decsum1}\sum_{k=1}^{ \lfloor{nt/2} \rfloor}
\sum_{m=1}^{ \lfloor{nt}/{2}
\rfloor
- k}\frac{4\kappa^2}{n^2} \bigl[
\sqrt{2k} ( \sqrt{2m}-\sqrt{2m-1} ) - \sqrt{2k-1} (\sqrt{2m+1}-\sqrt
{2m} )
\bigr]^2.\hspace*{-30pt}
\end{equation}
We can write
\begin{eqnarray*}
&&\sqrt{2k} ( \sqrt{2m}-\sqrt{2m-1} ) - \sqrt{2k-1} (\sqrt{2m+1}-\sqrt
{2m} )
\\
&&\qquad=-\sqrt{2k-1} (\sqrt{2m+1}-2\sqrt{2m}+\sqrt{2m-1} ) \\
&&\qquad\quad{} + (\sqrt{2k}-\sqrt
{2k-1} ) (
\sqrt{2m}-\sqrt{2m-1} ).
\end{eqnarray*}
Observe that
\[
\bigl[ (\sqrt{2k}-\sqrt{2k-1} ) (\sqrt{2m}-\sqrt{2m-1} ) \bigr]^2
\le\frac{1}{(2k-1)(2m-1)}
\]
and so
\[
\frac{4\kappa^2}{n^2}\sum_{k=1}^{ \lfloor{nt/2} \rfloor} \sum
_{m=1}^{ \lfloor{nt}/{2} \rfloor- k} \frac
{1}{(2k-1)(2m-1)} \le
\frac{4\kappa^2}{n^2} \Biggl(\sum_{k=1}^{ \lfloor{nt/2} \rfloor}
\frac
{1}{2k-1} \Biggr)^2 \le\frac{C\log(nt)^2}{n^2}.
\]
Therefore the contribution of this term is zero, and it follows by
Cauchy--Schwarz that the only significant term is
\begin{eqnarray*}
&&\frac{4\kappa^2}{n^2}\sum_{k=1}^{ \lfloor{nt/2} \rfloor} \sum
_{m=1}^{ \lfloor{nt}/{2} \rfloor-
k}(2k-1) (\sqrt{2m+1}-2\sqrt{2m}+
\sqrt{2m-1} )^2
\\
&&\qquad = 4\kappa^2\sum_{m=1}^{ \lfloor{nt/2} \rfloor} (
\sqrt{2m+1}-2\sqrt{2m}+\sqrt{2m-1} )^2\sum
_{k=1}^{ \lfloor{nt}/{2} \rfloor
-m}\frac{2k-1}{n^2}
\\
&&\qquad = 4\kappa^2\sum_{m=1}^{ \lfloor{nt/2} \rfloor} (
\sqrt{2m+1}-2\sqrt{2m}+\sqrt{2m-1} )^2\frac{ ( \lfloor{nt}/{2}
\rfloor- m )^2}{n^2},
\end{eqnarray*}
which converges as $n \to\infty$ to
\[
\kappa^2t^2 \sum_{m=1}^\infty
(\sqrt{2m+1}-2\sqrt{2m}+\sqrt{2m-1} )^2.
\]
Next, we consider the term $\frac{1}{\sqrt{x}}\psi(x)$. The
contribution of this term to (\ref{Iphi1}) is
%
%e23 #&#
\begin{eqnarray}
\label{Iphi2}
&&\frac{1}{n} \int_{2m-1}^{2m}
\sqrt{\frac{2k}{2k+y}}\psi\biggl(1+\frac{y}{2k} \biggr)\,dy
\nonumber\\[-8pt]\\[-8pt]
&&\qquad{}-
\frac{1}{n} \int_{2m}^{2m+1} \sqrt{
\frac{2k-1}{2k-1+y}}\psi\biggl(1+\frac{y}{2k-1} \biggr)\,dy.\nonumber
\end{eqnarray}
We can bound (\ref{Iphi2}) by
\begin{eqnarray*}
&&\frac{1}{n} \biggl\llvert\int_{2m-1}^{2m}
\sqrt{\frac{2k}{2k+y}}\psi\biggl(1+\frac{y}{2k} \biggr)\,dy - \int
_{2m}^{2m+1} \sqrt{\frac
{2k-1}{2k-1+y}}\psi
\biggl(1+\frac{y}{2k-1} \biggr)\,dy\biggr\rrvert
\\
&&\qquad \le\frac{1}{n} \biggl[ \sup_{(1,\infty)}\bigl|\psi(x)\bigr|
\frac
{\sqrt{2k}-\sqrt{2k-1}}{\sqrt{2k+2m-1}}\\
&&\qquad\quad\hspace*{11.5pt}{} + \sqrt{\frac
{2k}{2k+2m-1}}\biggl\llvert\int
_{2m-1}^{2m} \psi\biggl(1 + \frac
{y}{2k}
\biggr)\,dy \\
&&\qquad\quad\hspace*{93.1pt}{} - \int_{2m}^{2m+1} \psi\biggl(1 +
\frac
{y}{2k-1} \biggr)\,dy\biggr\rrvert\biggr]
\\
&&\qquad = \frac{1}{n} ( A_{k,m} + B_{k,m} ).
\end{eqnarray*}
Since $|\psi(x)|$ is bounded, we have
%
%e24 #&#
\begin{equation}
\label{Psiu1}A_{k,m} \le\frac{C}{\sqrt{2k-1}\sqrt{2k+2m-1}} \le
\frac
{C}{\sqrt{2k-1}\sqrt{2m-1}}.
\end{equation}
For $B_{k,m}$ using that $|\psi'(x+1)| \le Cx^{-{1/2}}$,
\begin{eqnarray*}
&&\biggl\llvert\int_{2m-1}^{2m} \psi\biggl(1 +
\frac{y}{2k} \biggr)\,dy - \int_{2m}^{2m+1}
\psi\biggl(1 + \frac{y}{2k-1} \biggr)\,dy\biggr\rrvert
\\
&&\qquad=\biggl\llvert\int_{2m-1}^{2m} \psi\biggl(1+
\frac{u}{2k} \biggr) - \psi\biggl(1+\frac{u+1}{2k-1} \biggr)\,du\biggr
\rrvert
\\
&&\qquad \le\int_{2m-1}^{2m} \biggl\llvert\int
_{({u+1})/({2k-1})}^{
{u}/({2k})} \psi'(1+v) \,dv \biggr
\rrvert \,du
\\
&&\qquad\le C\int_{2m-1}^{2m} \int_{{u}/({2k})}^{({u+1})/({2k-1})}
v^{-{1/2}} \,dv\,du \\
&&\qquad\le\frac{C}{\sqrt{2k-1}} ( \sqrt{2m+1} - \sqrt{2m} )
\\
&&\qquad\le\frac{C}{\sqrt{2k-1}\sqrt{2m-1}}
\end{eqnarray*}
so that
%
%e25 #&#
\begin{equation}
\label{Psiu2} B_{k,m} \le\sqrt{\frac{2k}{2k+2m-1}}\cdot
\frac{C}{\sqrt{2k-1}\sqrt{2m-1}} \le\frac{C}{\sqrt{2k-1}\sqrt{2m-1}}.
\end{equation}
Hence, from (\ref{Psiu1}) and (\ref{Psiu2}), we obtain
\begin{eqnarray*}
&&\sum_{k=1}^{ \lfloor{nt/2} \rfloor} \sum
_{m=1}^{ \lfloor{nt}/{2} \rfloor- k} \frac
{C}{n^2} \biggl(
\frac{1}{\sqrt{2k-1}\sqrt{2m-1}} \biggr)^2
\\
&&\qquad \le\frac{C}{n^2}\sum_{k,m=1}^{ \lfloor{nt/2} \rfloor}
\frac{1}{(2m-1)(2k-1)} \le\frac{C \log(n)^2}{n^2},
\end{eqnarray*}
so the portion represented by (\ref{Iphi2}) tends to zero as $n \to
\infty$. Since this term is not significant, it follows by
Cauchy--Schwarz that the behavior of
\[
\sum_{j=1}^{ \lfloor{nt/2} \rfloor}\sum
_{k=1}^{j-1} \beta_n(2j-1,2k-1)^2
\]
is dominated by equation (\ref{decsum1}), and we have result (\ref
{Betaless}), with
\[
C_1 = \kappa^2\sum_{m=1}^\infty
(\sqrt{2m+1}-2\sqrt{2m}+\sqrt{2m-1} )^2.
\]

\textit{Proof of} (\ref{Betaeq}). For each $j$,
\begin{eqnarray*}
&&
\beta_n(2j-1,2j-1)^2 \\
&&\qquad= \biggl(\frac{2j}{n}
\phi(1) - 2\frac
{2j-1}{n}\phi\biggl(\frac{2j}{2j-1} \biggr) +
\frac{2j-1}{n}\phi(1) \biggr)^2
\\
&&\qquad=\frac{1}{n^2} \biggl[\phi(1) + (4j-2) \biggl( \phi(1) - \phi\biggl(1+
\frac
{1}{2j-1} \biggr) \biggr) \biggr]^2
\\
&&\qquad= \frac{\phi(1)^2}{n^2} + \frac{4(2j-1)\phi(1)}{n^2} \biggl(\phi(1) -
\phi\biggl(1+
\frac{1}{2j-1} \biggr) \biggr)
\\
&&\qquad\quad{} +\frac
{4(2j-1)^2}{n^2} \biggl(\phi(1) - \phi\biggl(1+\frac{1}{2j-1}
\biggr) \biggr)^2.
\end{eqnarray*}
Since $\llvert\phi(1) - \phi(1+\frac{1}{2j-1} )\rrvert
\le
\frac{c_{\phi,3}}{\sqrt{2j-1}}$ by $(\phi.3)$, we see that
\[
\sum_{j=1}^{ \lfloor{nt/2} \rfloor} \biggl[\frac
{\phi(1)^2}{n^2}
+ \frac{4(2j-1)\phi
(1)}{n^2}\biggl\llvert\phi(1) - \phi\biggl(1+\frac{1}{2j-1}
\biggr)\biggr\rrvert\biggr] \le Cn^{-{1/2}},
\]
which implies only the last term is significant in the limit. Again we
use $(\phi.4)$ to obtain
\begin{eqnarray*}
&&
\phi(1) - \phi\biggl(1+\frac{1}{2j-1} \biggr) \\
&&\qquad= -\int
_1^{1+
{1}/({2j-1})} \phi'(x)\,dx
\\
&&\qquad= -\kappa\int_1^{1+{1}/({2j-1})} \frac{1}{\sqrt{x-1}}\,dx -
\int_1^{1+{1}/({2j-1})} \frac{1}{\sqrt{x}}\psi(x)\,dx
\\
&&\qquad= -\frac{2\kappa}{\sqrt{2j-1}} + O \biggl(\frac{1}{2j-1} \biggr);
\end{eqnarray*}
hence
\[
\frac{4(2j-1)^2}{n^2} \biggl(\phi(1) - \phi\biggl(1+\frac
{1}{2j} \biggr)
\biggr)^2 = \frac{16\kappa^2(2j-1)^2}{n^2(2j-1)} + O \biggl(\frac
{j^{{1/2}}}{n^2}
\biggr)
\]
and taking $n \to\infty$,
\[
\lim_{n \to\infty}\sum_{j=1}^{ \lfloor{nt/2} \rfloor}
\frac{16\kappa^2(2j-1)}{n^2} + O \biggl(\frac{j^{{1/2}}}{n^2}
\biggr) = 4\kappa^2
t^2,
\]
which gives (\ref{Betaeq}). Thus (\ref{etaodd}) is proved with
$C_{\beta,1} = 4\kappa^2+2\kappa^2\sum_{m=1}^\infty(\sqrt{2m+1}-2\sqrt
{2m}+\sqrt{2m-1} )^2$.

By similar computations,
\[
C_{\beta,2} = 4\kappa^2 + 2\kappa^2\sum
_{m=1}^\infty(\sqrt{2m+1}-2\sqrt{2m}+\sqrt{2m-1}
)^2
\]
and
\[
C_{\beta,3} = 4\kappa^2 + 2\kappa^2\sum
_{m=1}^\infty(\sqrt{2m+2}-2\sqrt{2m+1}+\sqrt{2m}
)^2
\]
and so
\begin{eqnarray*}
C_\beta^+ &=& C_{\beta,1} + C_{\beta,2} =8
\kappa^2+ 4\kappa^2 \sum_{m=1}^\infty
(\sqrt{2m+1}-2\sqrt{2m}+\sqrt{2m-1} )^2,
\\
C_\beta^- &=& 2C_{\beta,3} = 8\kappa^2+4
\kappa^2\sum_{m=1}^\infty(
\sqrt{2m+2}-2\sqrt{2m+1}+\sqrt{2m} )^2.
\end{eqnarray*}
Note that $C_\beta^+ \ge C_\beta^-$, and it follows that $\eta(t) =
\eta^+(t) - \eta^-(t)$ is nonnegative, and strictly positive if $\kappa
\neq0$.
\end{pf*}

%%%%%%%%%%Introduce Arctan
For a particular example, we consider a mean-zero Gaussian process $\{
F_t,\break t\ge0\}$, with covariance given by
\[
{\mathbb E} [ F_rF_t ] = \sqrt{rt}\sin^{-1}
\biggl(\frac
{r\wedge t}{\sqrt{rt}} \biggr).
\]
This process was studied by Swanson in a 2007 paper~\cite{Swanson07},
and it appears in the limit of normalized empirical quantiles of a
system of independent Brownian motions.
%
%co5.6 #&#
\begin{corollary}\label{co5.6}
The process $\{F_t, 0\le t\le T\}$, with the
covariance described above, satisfies the conditions of Section~\ref{sec4},
with $\eta(t) = (C_\beta^+ - C_\beta^- )t^2$, where
$C_\beta^+$, $C_\beta^-$ are as given in Proposition~\ref{pr5.4}, with $\kappa
^2 = 1/4$.
\end{corollary}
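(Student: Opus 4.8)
The plan is to recognize this covariance as a member of the family treated in Propositions 5.3 and 5.4 and then to invoke Theorem 4.3. First one notes that for $t \ge r > 0$ one has $\sqrt{rt} = r\sqrt{t/r}$ and $\frac{r\wedge t}{\sqrt{rt}} = \frac{r}{\sqrt{rt}} = (t/r)^{-1/2}$, so that
$${\mathbb E}\left[ F_rF_t\right] = r\,\phi\left(\frac{t}{r}\right), \qquad \phi(x) = \sqrt{x}\,\arcsin\left(\frac{1}{\sqrt{x}}\right),\quad x\ge 1.$$
Setting $\theta = \arcsin(1/\sqrt x)$ gives $\sin\theta = 1/\sqrt x$, $\cos\theta = \sqrt{(x-1)/x}$, hence $\tan\theta = 1/\sqrt{x-1}$, so $\phi(x) = \sqrt{x}\,\arctan(1/\sqrt{x-1})$, the function announced in the introduction. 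Since the remark following Proposition 5.4 records that $(\phi.4)$ implies $(\phi.2)$, it suffices to check that this $\phi$ satisfies $(\phi.1)$, $(\phi.3)$ and $(\phi.4)$ with $\kappa^2 = 1/4$; then Proposition 5.3 gives conditions (i)--(iv), Proposition 5.4 gives condition (v) with $\eta^{\pm}(t) = C_\beta^{\pm}\,t^2$, and Theorem 4.3 gives the conclusion with $\eta(t) = (C_\beta^+ - C_\beta^-)\,t^2$.

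Condition $(\phi.1)$ is immediate: $\phi$ is continuous on $[1,\infty)$ with $\phi(1) = \pi/2$ and $\phi(x) \to 1$ as $x \to \infty$, and moreover $\phi'(x) = \frac{1}{2\sqrt x}\bigl(\arctan(1/\sqrt{x-1}) - 1/\sqrt{x-1}\bigr) < 0$, so $\|\phi\|_\infty = \pi/2$. For $(\phi.4)$ one computes, using $\frac{d}{dx}\arctan((x-1)^{-1/2}) = -\frac{1}{2x\sqrt{x-1}}$,
$$\phi'(x) = \frac{1}{2\sqrt x}\arctan\frac{1}{\sqrt{x-1}} - \frac{1}{2\sqrt x\,\sqrt{x-1}}.$$
The one genuine step is to peel the constant-coefficient singular part off the last term: writing $1/\sqrt x = 1 - (1 - 1/\sqrt x)$ and using $\frac{\sqrt x - 1}{\sqrt{x-1}} = \sqrt{\frac{\sqrt x - 1}{\sqrt x + 1}}$ gives
$$\phi'(x) = -\frac{1}{2\sqrt{x-1}} + \frac{1}{\sqrt x}\left(\frac{1}{2}\sqrt{\frac{\sqrt x - 1}{\sqrt x + 1}} + \frac{1}{2}\arctan\frac{1}{\sqrt{x-1}}\right),$$
which is exactly the form $(\phi.4)$ with $\kappa = -\tfrac12$ (so $\kappa^2 = \tfrac14$, as asserted) and $\psi(x) = \tfrac12\sqrt{(\sqrt x - 1)/(\sqrt x + 1)} + \tfrac12\arctan(1/\sqrt{x-1})$. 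This $\psi$ is bounded and differentiable on $(1,\infty)$, and a short computation gives $\psi'(x) = -\frac{1}{4x(\sqrt x + 1)\sqrt{x-1}}$, whence $|\psi'(1+u)| \le \tfrac18\,u^{-1/2}$, the required derivative bound.

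For $(\phi.3)$ one differentiates the displayed formula for $\phi'$ once more; the algebra collapses to
$$\phi''(x) = \frac{1}{4\sqrt x\,(x-1)^{3/2}} - \frac{1}{4x^{3/2}}\arctan\frac{1}{\sqrt{x-1}}.$$
Bounding the last term by $\frac{1}{4x^{3/2}\sqrt{x-1}}$ via $\arctan u \le u$ and using $x^{-3/2}(x-1)^{-1/2} = \tfrac{x-1}{x}\,x^{-1/2}(x-1)^{-3/2} \le x^{-1/2}(x-1)^{-3/2}$ yields $|\phi''(x)| \le \tfrac12\,x^{-1/2}(x-1)^{-3/2}$ on $(1,\infty)$, which is $(\phi.3)$. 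With $(\phi.1)$, $(\phi.3)$ and $(\phi.4)$ in hand, Propositions 5.3 and 5.4 together with Theorem 4.3 finish the proof. The only non-mechanical point in the whole argument is the algebraic rearrangement that isolates the constant $\kappa = -\tfrac12$ from $-\frac{1}{2\sqrt x\sqrt{x-1}}$, along with the estimates for $\psi'$ and $\phi''$ near $x = 1$; the rest is routine calculus.
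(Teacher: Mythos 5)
Your proof is correct and follows essentially the same route as the paper: rewrite the covariance as $r\phi(t/r)$ with $\phi(x)=\sqrt{x}\arctan\left(1/\sqrt{x-1}\right)$, verify $(\phi.1)$, $(\phi.3)$, $(\phi.4)$ with $\kappa=-1/2$, and invoke Propositions 5.3--5.4 and Theorem 4.3, carrying out explicitly the differentiations the paper declares ``easily verified.'' Incidentally, your form of $\phi'$, with $+\frac{1}{2\sqrt{x}}\arctan\left(1/\sqrt{x-1}\right)$, is the correct one --- the paper's displayed decomposition carries a sign error on the arctangent term inside $\psi$ --- but this is immaterial, since either version of $\psi$ is bounded with $|\psi'(1+u)|\le C u^{-1/2}$ and both yield $\kappa^2=1/4$.
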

\begin{pf}
Assume $0 \le r < t\le T$. We can write
\[
\sqrt{rt}\sin^{-1} \biggl(\sqrt{\frac rt} \biggr) = \sqrt{rt}
\tan^{-1} \biggl(\sqrt{\frac{r}{t-r}} \biggr) = r\phi\biggl(
\frac tr \biggr),
\]
where
%
%e26 #&#
\begin{equation}
\label{arctanphi}\phi(x) = \cases{\displaystyle \sqrt{x}\tan^{-1}
\biggl(\frac{1}{\sqrt{x-1}} \biggr),&\quad if $x>1$,
\cr
\displaystyle \frac{\pi}{2}, &\quad if $x=1$.}
\end{equation}

Condition $(\phi.1)$ is clear by continuity and L'H\^opital. Conditions
$(\phi.2)$ and $(\phi.3)$ are easily verified by differentiation. For
$(\phi.4)$ we can write,
\[
\phi'(x) = -\frac{1}{2\sqrt{x-1}} +\frac{1}{2\sqrt{x}} \biggl(
\frac
{\sqrt{x}-1}{\sqrt{x-1}}-\tan^{-1} \biggl(\frac{1}{\sqrt{x-1}} \biggr)
\biggr)
\]
so that $\kappa= -1/2$, and
\[
\psi(x) = \frac12 \biggl(\frac{\sqrt{x}-1}{\sqrt{x-1}}-\tan^{-1} \biggl(
\frac{1}{\sqrt{x-1}} \biggr) \biggr)
\]
satisfies $(\phi.4)$.
\end{pf}

%s5.3 #&#
\subsection{Empirical quantiles of independent Brownian motions}\label{sec5.3}
For our last example, we consider a family of processes studied by
Swanson in~\cite{Swanson10}. Similar to~\cite{Swanson07}, this
Gaussian family arises from the empirical quantiles of independent
Brownian motions, but this case is more general, and does not have a
covariance representation (\ref{CovFdef}).

Let $B = \{B(t), t\ge0\}$ be a Brownian motion with random initial
position. Assume $B(0)$ has a density function $f \in{\cal C}^\infty
({\mathbb R})$ such that
\[
\sup_{x\in{\mathbb R}} \bigl(1+|x|^m\bigr)\bigl|f^{(n)}(x)\bigr| <
\infty
\]
for all nonnegative integers $m$ and $n$. It follows that for $t > 0$,
$B$ has density
\[
u(x,t) = \int_{\mathbb R} f(y)p(t,x-y)\,dy,
\]
where $p(t,x) = (2\pi t)^{-1/2}e^{-{{x^2}/({2t})}}$. For fixed
$\alpha\in(0,1)$, define the $\alpha$-quantile $q(t)$ by
\[
\int_{-\infty}^{q(t)} u(x,t)\,dx = \alpha,
\]
where we assume $f(q(0)) >0$. It is proved in~\cite{Swanson10} (Theorem
1.4) that there exists a continuous, centered Gaussian process $\{F(t),
t\ge0\}$ with covariance
%
%e27 #&#
\begin{equation}
\label{rhodef} {\mathbb E} [ F_r F_t ] =
\rho(r,t) = \frac
{{\mathbb P} ( B(r) \le q(r), B(t) \le q(t) ) -\alpha^2}{u(q(r),r)u(q(t),t)}.
\end{equation}

In~\cite{Swanson10}, the properties of $\rho$ are studied in detail,
and we follow the notation and proof methods given in Section 3 of that
paper. Swanson defines the following factors:
\[
\tilde{\rho}(r,t)={\mathbb P} \bigl( B(r) \le q(r), B(t) \le q(t) \bigr
) -
\alpha^2 \quad\mbox{and}\quad \theta(t)= \bigl(u\bigl(q(t),t\bigr)
\bigr)^{-1}
\]
so that $\rho(r,t) = \theta(r)\theta(t)\tilde{\rho}(r,t)$. For
fixed $T >0$ and $0
< r < t \le T$, the first partial derivatives of $\tilde{\rho}$ are
calculated in~\cite{Swanson10} [see equations (3.4), (3.7)]
%
%e28 #&#
\begin{eqnarray}
\frac{\partial}{\partial t} \tilde{\rho}(r,t)&=& q'(t)\int
_{-\infty
}^{q(r)}p\bigl(t-r,x-q(t)\bigr)u(x,r)\,dy\,dx
\nonumber
\\
&&{}-\frac12
p\bigl(t-r,q(r)-q(t)\bigr)u\bigl(q(r),r\bigr)\nonumber\\[-8pt]\\[-8pt]
&&{} + u\bigl(q(r),r
\bigr)q'(r)\int_{-\infty
}^{q(t)} p
\bigl(t-r,q(r)-y\bigr)\,dy
\nonumber\\
&&{}+ \frac12\int_{-\infty}^{q(t)}\int_{-\infty}^{q(r)}
p(t-r,x-y)\,\frac{\partial^2}{\partial x^2}u(x,r)\,dx\,dy;
\nonumber
\\
\label{rhosqr}\frac{\partial}{\partial r} \tilde{\rho}(r,t)&=&
\frac12 p\bigl(t-r,q(t)-q(r)\bigr)u\bigl(q(r),r\bigr).
\end{eqnarray}

%le5.7 #&#
\begin{lemma}\label{le5.7}
Let $0 < T$, and $0< r <t \le T$. Then there exist
constants $C_i, i=1,2,3,4$, such that:
\begin{longlist}[(a)]
\item[(a)]
\[
\biggl\llvert\frac{\partial}{\partial r} \rho(r,t)\biggr\rrvert\le C_1
|t-r|^{-1/2};
\]
\item[(b)]
\[
\biggl\llvert\frac{\partial^2}{\partial r^2} \rho(r,t)\biggr\rrvert
\le C_2
|t-r|^{-{3/2}};
\]
\item[(c)]
\[
\biggl\llvert\frac{\partial}{\partial t} \rho(r,t)\biggr\rrvert\le C_3
|t-r|^{-{1/2}};
\]
\item[(d)]
\[
\biggl\llvert\frac{\partial^2}{\partial t^2} \rho(r,t)\biggr\rrvert
\le C_4
|t-r|^{-{3/2}}.
\]
\end{longlist}
\end{lemma}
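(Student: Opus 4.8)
The plan is to use Swanson's factorization $\rho(r,t)=\theta(r)\theta(t)\rrt$, differentiate, and reduce the four estimates to three elementary inputs that I would establish first (cf. \cite{Swanson10}). From the hypotheses on $f$ one gets that $u(\cdot,t)$ and each spatial derivative $\partial_x^k u(\cdot,t)$ are bounded uniformly in $t\in[0,T]$ and integrable in $x$, with $\int_{\mathbb R}|\partial_x^k u(x,t)|\,dx\le\int_{\mathbb R}|f^{(k)}(y)|\,dy<\infty$ (differentiate under the integral defining $u$ and integrate by parts in $y$), together with the heat relation $\partial_t u=\tfrac12\partial_x^2 u$; since $f(q(0))>0$, $u(q(t),t)$ stays bounded away from zero on $[0,T]$, so the implicit function theorem applied to $\int_{-\infty}^{q(t)}u(x,t)\,dx=\alpha$ shows $q\in{\cal C}^\infty([0,T])$ with bounded derivatives, and hence so is $\theta=1/u(q(\cdot),\cdot)$. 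The second input is the set of Gaussian estimates $|\partial_x^k p(s,z)|\le C_k s^{-(k+1)/2}$ (uniformly in $z$), $\int_{\mathbb R}|\partial_x^k p(s,z)|\,dz\le C_k s^{-k/2}$, and $\partial_s p=\tfrac12\partial_x^2 p$. The third is simply $|\rrt|\le 1$.

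Parts (a) and (b) then follow directly from \req{rho_sq_r}. Since $\partial_r\rrt=\tfrac12 p(t-r,q(t)-q(r))\,u(q(r),r)$, we have $|\partial_r\rrt|\le C|t-r|^{-1/2}$; differentiating once more in $r$ (writing $\partial_r[p(t-r,q(t)-q(r))]=-\partial_s p-q'(r)\partial_x p$, using $|\partial_s p|\le C|t-r|^{-3/2}$, $|\partial_x p|\le C|t-r|^{-1}$, and that $\partial_r[u(q(r),r)]=q'(r)\partial_x u(q(r),r)+\partial_t u(q(r),r)$ is bounded) gives $|\partial_r^2\rrt|\le C|t-r|^{-3/2}$. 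Feeding these into $\partial_r\rho=\theta'(r)\theta(t)\rrt+\theta(r)\theta(t)\partial_r\rrt$ and into the analogous expression for $\partial_r^2\rho$, and using that for $|t-r|\le T$ every bounded quantity is $\le T^{k/2}|t-r|^{-k/2}$, yields (a) and (b).

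For (c) and (d) I start from the four-term formula for $\partial_t\rrt$ displayed just before the lemma. Inspecting its terms, each is a product of bounded factors ($q'(t)$, $q'(r)$, $u(q(r),r)$, $\theta$, an integral $\int_{-\infty}^{a}p(t-r,\cdot)\le 1$, or the double integral $\int\!\!\int p(t-r,x-y)\partial_x^2 u(x,r)\,dx\,dy$, which is bounded by $\int_{\mathbb R}|f''(y)|\,dy$) with at most one factor $p(t-r,q(r)-q(t))$, which is $O(|t-r|^{-1/2})$; hence $|\partial_t\rrt|\le C|t-r|^{-1/2}$ and, with $\partial_t\rho=\theta(r)\theta'(t)\rrt+\theta(r)\theta(t)\partial_t\rrt$, this proves (c). For (d) I differentiate the four-term formula once more in $t$, justifying the interchange of $\partial_t$ with the integrals by dominated convergence (Gaussian decay and the rapid decay of the $f^{(k)}$). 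Each differentiation produces either a boundary contribution of the form $q'(t)\times(\text{integrand at }y=q(t))$, already controlled in the previous step, or a factor $\partial_s p=\tfrac12\partial_x^2 p$ inside an integral, which I reduce by integrating by parts in the appropriate variable: e.g.\ $\int_{-\infty}^{q(t)}\partial_s p(t-r,q(r)-y)\,dy=-\tfrac12\partial_x p(t-r,q(r)-q(t))=O(|t-r|^{-1})$, and $\int_{-\infty}^{q(r)}\partial_x^2 p(t-r,x-q(t))u(x,r)\,dx=\partial_x p(t-r,q(r)-q(t))u(q(r),r)-p(t-r,q(r)-q(t))\partial_x u(q(r),r)+\int_{-\infty}^{q(r)}p(t-r,x-q(t))\partial_x^2 u(x,r)\,dx$, each piece of which is $O(|t-r|^{-1})$ by the first two inputs. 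The only genuinely $|t-r|^{-3/2}$-singular contribution is the $\partial_s p$ term coming from differentiating the diagonal term $-\tfrac12 p(t-r,q(r)-q(t))u(q(r),r)$; hence $|\partial_t^2\rrt|\le C|t-r|^{-3/2}$, and $\partial_t^2\rho=\theta(r)\theta''(t)\rrt+2\theta(r)\theta'(t)\partial_t\rrt+\theta(r)\theta(t)\partial_t^2\rrt$ gives (d).

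The routine parts are (a) and (b); the real work is the computation of $\partial_t^2\rrt$. The delicate point is to verify that after differentiating the four-term formula the only surviving $|t-r|^{-3/2}$ singularity is the one from $\partial_s p$ in the diagonal term: this requires careful bookkeeping of the moving endpoints $q(r)$ and $q(t)$, trading every second spatial derivative of $p$ against derivatives of $u$ by integration by parts so that the boundary terms that appear involve only $\partial_x p$ (hence are $O(|t-r|^{-1})$) and never $\partial_x^2 p$. Justifying the differentiation under the integral sign, which is where the full strength of the decay hypotheses on $f$ and all its derivatives is used, is the other point needing care.
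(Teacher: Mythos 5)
Your argument is correct and follows the same route as the paper: the paper simply cites Theorem 3.1 of Swanson's paper for (a) and (c) and states that (b) and (d) ``follow by differentiating the expressions'' \req{rho_sq_r} and the four-term formula for $\partial_t\rrt$, which is exactly the computation you carry out (factorization $\rho=\theta(r)\theta(t)\rrt$, boundedness of $\theta$, $q$ and their derivatives, Gaussian kernel estimates, and integration by parts to trade $\partial_s p=\tfrac12\partial_x^2p$ against derivatives of $u$). Your version just makes explicit the details the paper delegates to the reference.
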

\begin{pf}
Results (a) and (c) are proved in Theorem 3.1 of~\cite{Swanson10}.
Bounds for (b) and (d) follow by differentiating the expressions for
$\partial_r\rho(r,t)$ and $\partial_t \rho(r,t)$ given in the proof of
that theorem.
\end{pf}
%
%pr5.8 #&#
\begin{proposition}\label{pr5.8} Let $T>0$, $0< s < T\wedge1$ and $s \le r \le t
\le T$. Then $\rho(r,t)$ satisfies conditions
\textup{(i)}--\textup{(iv)} of Section~\ref{sec4}.\vadjust{\goodbreak}
\end{proposition}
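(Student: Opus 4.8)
The plan is to reduce every increment covariance in conditions (i)--(iv) to an iterated integral of first, second, or mixed partial derivatives of $\rho$, and then to estimate these integrals using Lemma 5.7 together with the closed forms \req{rho_sq_r} and the expression for $\partial_t\rrt$ displayed just above it. Two structural facts will be used repeatedly: the normalization $\theta(\tau)u(q(\tau),\tau)\equiv 1$, and the identity $\rrt|_{r=t}={\mathbb P}(B(t)\le q(t))-\alpha^2=\alpha-\alpha^2$, so that $\rho(\tau,\tau)=(\alpha-\alpha^2)\theta(\tau)^2$ is a smooth function of $\tau$ on $[0,T]$ (here one uses that $u(q(\tau),\tau)$ is smooth and bounded away from $0$, from the growth hypotheses on $f$ and $f(q(0))>0$). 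I also record that the only genuinely singular term in the complicated expression for $\partial_t\rrt$ is $-\tfrac12 p(t-r,q(r)-q(t))u(q(r),r)$, the remaining four terms being bounded on $\{0<r<t\le T\}$ (each reduces, after dominating $p$ by its supremum or integrating it out, to a bound involving $\|u(\cdot,r)\|_\infty$, $\|\partial_x^2 u(\cdot,r)\|_{L^1}$ and $\|q'\|_\infty$).

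For condition (i) I would write
\[
{\mathbb E}\big[(F_t-F_{t-s})^2\big]=\int_{t-s}^t\frac{\partial}{\partial x}\rho(x,t)\,dx-\int_{t-s}^t\frac{\partial}{\partial y}\rho(t-s,y)\,dy
\]
and bound the integrands by $C(t-x)^{-1/2}$ and $C(y-t+s)^{-1/2}$ via Lemma 5.7(a),(c); since $|\,\cdot\,|^{-1/2}$ is integrable near $0$ this gives $Cs^{1/2}$. The off-diagonal bounds in (ii)--(iv) (those of the form $s^2|t-r|^{-3/2}$, $s|t-r|^{-1/2}$, and $s\,(t-2s)^{-1/2}$, the last matching $s^{1/2+\gamma}(t-2s)^{-\gamma}$ with $\gamma=\tfrac12$) are obtained the same mechanical way: express the relevant first- or second-order difference as an iterated integral of $\partial\rho$, $\partial^2\rho$, or the mixed partial $\partial^2\rho/\partial r\,\partial t$ over a rectangle of area $s$ or $s^2$ that, under the standing assumptions $|t-r|\ge 2s$ and $r\ge 2s$, stays at distance at least $c\,|t-r|$ from the diagonal, and then insert Lemma 5.7(a)--(d). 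For the two-increment covariance in (ii) one needs in addition $\big|\partial^2\rho(r,t)/\partial r\,\partial t\big|\le C|t-r|^{-3/2}$ on $\{0<r<t\le T\}$; I would obtain this by differentiating \req{rho_sq_r} in $t$, using $|\partial_\tau p(\tau,z)|\le C\tau^{-3/2}$, $|\partial_z p(\tau,z)|\le C\tau^{-1}$, boundedness of $q'$ and of $u$ and its $x$-derivatives, then multiplying by $\theta(r)\theta(t)$ and absorbing lower-order terms via $|t-r|\le T$. The near-diagonal cases of (iii) and (iv) ($r<2s$ or $|t-r|<2s$, where only $Cs^{1/2}$ is required) are handled by the first-difference representation and the integrability of $|\,\cdot\,|^{-1/2}$ over an interval of length $O(s)$, with ${\mathbb E}[F_rF_t]\le\|\rho\|_\infty$ as a last resort.

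The one estimate that does not come out of Lemma 5.7 alone --- and what I expect to be the main obstacle --- is the sharp bound $|{\mathbb E}[F_t(F_{t+s}-F_{t-s})]|\le C_3\,s(t-s)^{-1/2}$ for $t\ge 2s$ in condition (iv): the direct argument only yields $Cs^{1/2}$, which is too weak once $t-s$ is large. To recover it I would use the polarization identity
\[
{\mathbb E}\big[F_t(F_{t+s}-F_{t-s})\big]=\tfrac12\big(\rho(t{+}s,t{+}s)-\rho(t{-}s,t{-}s)\big)+\tfrac12{\mathbb E}\big[(F_t-F_{t-s})^2\big]-\tfrac12{\mathbb E}\big[(F_{t+s}-F_t)^2\big].
\]
The first term is $O(s)$ since $\tau\mapsto\rho(\tau,\tau)$ is $C^1$. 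For the difference of the two increment variances, expand each as in condition (i), substitute \req{rho_sq_r} and the expression for $\partial_t\rrt$, and use $\theta(\tau)u(q(\tau),\tau)\equiv1$: the singular contribution to $\int\partial_r\rho$ becomes $\tfrac12\theta(t)\int_0^s p(u,q(t)-q(t{-}u))\,du=\tfrac12\theta(t)\sqrt{2s/\pi}+O(s^{3/2})$ (via $p(u,O(u))=(2\pi u)^{-1/2}(1+O(u))$), and likewise the $\partial_t\rho$ piece with the opposite sign, so both ${\mathbb E}[(F_t-F_{t-s})^2]$ and ${\mathbb E}[(F_{t+s}-F_t)^2]$ equal $\theta(t)\sqrt{2s/\pi}$ up to an $O(s)$ error whose implicit constant depends on $\tau$ only through the smooth functions $\theta,q,u$. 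Their difference is therefore $O(s)$, hence so is ${\mathbb E}[F_t(F_{t+s}-F_{t-s})]$, and since $s\le\sqrt T\,s(t-s)^{-1/2}$ the stated bound follows (with $C_3$ depending on $T$, which is permitted); taking instead $t<2s$ trivially gives $Cs^{1/2}$. Verifying this cancellation --- identifying precisely which terms of \req{rho_sq_r} and of the $\partial_t\rrt$ formula are singular of order $(t-r)^{-1/2}$ and checking that the quantile normalization makes their singular coefficients agree --- is the delicate point; everything else amounts to integrating powers of the distance to the diagonal.
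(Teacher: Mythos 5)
Your argument is correct, but it departs from the paper's proof in two substantive ways. First, the paper does not reprove conditions (i) and (ii) at all: it imports them from Swanson's paper (Corollaries 3.2, 3.5 and Remark 3.6 of \cite{Swanson10}), and likewise imports the off-diagonal bound $Cs|t-r|^{-\frac 12}$ in condition (iv) from Corollary 3.4 there; you instead derive (i) from Lemma 5.7(a),(c) by the first-difference representation and (ii) from a mixed-partial bound $|\partial_r\partial_t\rho|\le C|t-r|^{-\frac 32}$ obtained by differentiating \req{rho_sq_r} in $t$ --- this is sound (the $t$-derivative of $p(t-r,q(t)-q(r))u(q(r),r)$ is controlled by $|\partial_\tau p|\le C\tau^{-\frac 32}$, $|\partial_z p|\le C\tau^{-1}$ and boundedness of $q'$), and it makes the proof self-contained at the cost of one extra lemma. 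Second, and more interestingly, for the estimate ${\mathbb E}[F_t(F_{t+s}-F_{t-s})]=O(s)$ when $t\ge 2s$ the paper writes $\rho(t,t+s)-\rho(t,t-s)=\theta(t)\bigl[\tilde\rho(t,t+s)\Delta\theta+\theta(t-s)\Delta\tilde\rho\bigr]$ and shows directly that $\Delta\tilde\rho$, a difference of two joint probabilities each involving the event $\{B(t)\le q(t)\}$ and a time-$s$ transition, is $O(s)$ (using the heat equation $\partial_t u=\tfrac 12\partial_x^2u$ and integrability of $f''$); you reach the same conclusion by polarization plus cancellation of the $\theta(t)\sqrt{2s/\pi}$ leading terms of the two increment variances. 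The two routes exploit the identical underlying cancellation --- in the paper it is visible because the $\sqrt{s}$-singular parts of the two probabilities coincide, in yours because the normalization $\theta(\tau)u(q(\tau),\tau)\equiv 1$ forces the singular coefficients extracted from \req{rho_sq_r} and from the $\partial_t\rrt$ formula to agree --- but yours requires a genuine second-order expansion of the increment variance with a uniform $O(s)$ error, which is the most delicate step and which you correctly identify and sketch in adequate detail. Your identification of $-\tfrac 12 p(t-r,q(r)-q(t))u(q(r),r)$ as the only singular term of $\partial_t\rrt$ is also correct. In short: a valid proof, structurally different in where it locates the cancellation and in being independent of Swanson's intermediate corollaries, at the price of a longer computation for condition (iv).
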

\begin{pf}
Conditions (i) and (ii) are proved in~\cite{Swanson10} (Corollaries
3.2, 3.5 and Remark 3.6). For condition (iii), there are several cases
to consider.

\textit{Case} 1: $s\le r\le t- 2s$. Using Lemma~\ref{le5.7}(a),
\begin{eqnarray*}
\bigl\llvert{\mathbb E} \bigl[ F_t (F_{r+s}-2F_r+F_{r-s})
\bigr]\bigr\rrvert&\le&\bigl\llvert\rho(r+s,t) -\rho(r,t)\bigr\rrvert
+\bigl
\llvert\rho(r,t) -\rho(r-s,t)\bigr\rrvert
\\
&\le&\int_0^s \biggl\llvert
\frac{\partial}{\partial r}\rho(r+x,t)\biggr\rrvert\,dx+\int_{-s}^0
\biggl\llvert\frac{\partial}{\partial r}\rho(r+y,t)\biggr\rrvert\,dy
\\
&\le&2\int_0^s C_1|t-r-x|^{-{1/2}}\,dx
\le Cs^{1/2}.
\end{eqnarray*}

\textit{Case} 2: If $|t-r| < 2s$, the computation is similar to
case 1, where we use the fact that
\[
\int_0^s x^{-{1/2}}\,dx =
2s^{1/2}.
\]

\textit{Case} 3: For $r, t \ge2s$ and $|t-r|\ge2s$, the
results follow from Lemma~\ref{le5.7}(b) and (d) for $r<t$ and $r>t$, respectively.

Now to condition (iv). For the first part, we first assume $t \ge2s$.
Then using the above decomposition,
\begin{eqnarray*}
{\mathbb E} \bigl[ F_t (F_{t+s}-F_{t-s}) \bigr]
&=& \rho(t,t+s) - \rho(t,t-s)
\\
&=& \theta(t) \bigl[ \theta(t+s)\tilde{\rho}(t,t+s) - \theta(t-s)\tilde{
\rho}(t,t-s) \bigr]
\\
&=& \theta(t) \bigl[ \tilde{\rho}(t,t+s)\Delta\theta+ \theta(t-s)\Delta
\tilde{
\rho} \bigr],
\end{eqnarray*}
where $\Delta\theta= \theta(t)-\theta(t-s)$ and $\Delta\tilde
{\rho} =
\tilde{\rho}(t,t+s)-\tilde{\rho}(t,t-s)$.
First, note that
\[
\bigl\llvert u'\bigl(q(t),t\bigr)\bigr\rrvert= \biggl\llvert
\frac{\partial}{\partial
x}u\bigl(q(t),t\bigr)q'(t) + \frac{\partial}{\partial t}u
\bigl(q(t),t\bigr)\biggr\rrvert\le C,
\]
where we used that $q'(t)$ is bounded; see Lemma 1.1 of \cite
{Swanson10}. Since $u(q(t),t)$ is continuous and strictly positive on
$[0,T]$, it follows that $\theta(t)$ is bounded and
%
%e30 #&#
\begin{equation}
\label{thetadiff} \bigl\llvert\theta'(t)\bigr\rrvert
= \frac
{|u'(q(t),t)|}{u^2(q(t),t)} \le C,
\end{equation}
hence,
\[
\llvert\Delta\theta\rrvert\le\int_{-s}^s \bigl|
\theta'(t+x)\bigr|\,dx \le Cs.
\]
For $\Delta\tilde{\rho}$, we have
\begin{eqnarray*}
\llvert\Delta\tilde{\rho}\rrvert&=& \bigl\llvert{\mathbb P} \bigl(
B(t)\le
q(t), B(t+s)\le q(t+s) \bigr)\\
&&\hspace*{4pt}{} - {\mathbb P} \bigl( B(t)\le q(t),
B(t-s)\le q(t-s)
\bigr)\bigr\rrvert
\\
&=& \int_{-\infty}^{q(t)}\int_{-\infty}^{q(t+s)}
p(s,x-y)u(x,t)\,dy\,dx \\
&&{}- \int_{-\infty}^{q(t-s)}\int
_{-\infty}^{q(t)} p(s,x-y)u(x,t-s)\,dy\,dx
\\
&\le&\biggl\llvert\int_{-\infty}^{q(t-s)}\int
_{-\infty}^{q(t)} p(s,x-y)u(x,t) - p(s,x-y)u(x,t-s)\,dy\,dx
\biggr\rrvert+ Cs
\\
&\le&\int_{-\infty}^{q(t-s)} \bigl|u(x,t-s) - u(x,t)\bigr|\,dx+Cs
\\
&\le&\int_{-\infty}^\infty\biggl\llvert\int
_{t-s}^t \frac{\partial
}{\partial
r} u(x,r)\,dr\biggr\rrvert
\,dx + Cs \\
&=& \frac12 \int_{-\infty}^\infty\biggl\llvert
\int_{t-s}^t \frac{\partial^2}{\partial x^2}u(x,r)\,dr\biggr
\rrvert\,dx + Cs
\\
&\le&\frac12 \int_{-\infty}^\infty\int
_{-\infty}^\infty\int_{t-s}^t
\bigl\llvert f''(y)\bigr\rrvert p(r,x-y)\,dr\,dy\,dx +
Cs \le Cs.
\end{eqnarray*}
When $t < 2s$, we write
\begin{eqnarray*}
\bigl\llvert{\mathbb E} \bigl[ F_t(F_{t+s}-F_{t-s})
\bigr]\bigr\rrvert&\le&\bigl\llvert\rho(t,t+s) - \rho(t,t)\bigr
\rrvert+ \bigl
\llvert\rho(t,t) - \rho(t-s,t)\bigr\rrvert
\\
&\le&\int_0^s \biggl\llvert
\frac{\partial}{\partial t}\rho(t,t+x)\biggr\rrvert\,dx + \int_{-s}^0
\biggl\llvert\frac{\partial}{\partial r} \rho(t+y,t)\biggr\rrvert\,dy
\\
&\le& Cs^{1/2},
\end{eqnarray*}
using Lemma~\ref{le5.7} and the fact that
\[
\int_0^s x^{-{1/2}}\,dx =
2s^{1/2}.
\]

For the second part of condition (iv), we consider
\[
\bigl\llvert{\mathbb E} \bigl[ F_r(F_{t+s} -
F_{t-s}) \bigr]\bigr\rrvert\quad\mbox{and}\quad \bigl\llvert{\mathbb E}
\bigl[ F_s(F_t - F_{t-s}) \bigr]\bigr\rrvert.
\]
When $r<t-s$ (including $r=s$), an upper bound of $Cs|t-r|^{-{1/2}}$
is proved in~\cite{Swanson10}; see Corollary 3.4 and Remark 3.6. When
$r \ge t+2s$, or $|t-r| < 2s$, the bounds follow from Lemma~\ref{le5.7}.
\end{pf}

%%%%%%%%%%%Start Condition (v) discussion
The rest of this section is dedicated to verifying condition (v). We
start with two useful estimates. As in Proposition~\ref{pr5.8}, suppose $0<s\le
r \le t \le T$. It follows from Lemma 1.1 of~\cite{Swanson10} that for
some positive constant $C$,
%
%e31 #&#
\begin{equation}
\label{qdiff} \bigl|q(t) - q(r)\bigr| \le C(t-r).
\end{equation}
Using this estimate and the fact that $e^{-a} - e^{-b} \le b-a$ for
$0\le a\le b$, we obtain
%
%e32 #&#
\begin{equation}
\label{expqdiff} \bigl\llvert e^{-
{(q(t)-q(r))^2}/({2(t-r)})} -
e^{-{(q(t)-q(r-s))^2}/({2(t-r+s)})} \bigr\rrvert\le Cs \le1.
\end{equation}

Recalling the definitions in condition (v), we can write for $t \in[0,T]$,
\begin{eqnarray*}
\eta_n^+(t) - \eta_n^-(t) %&= \sum_{j,k=1}^{\Ntu} \beta_n(2j-1,2k-1)^2
%+ \beta_n(2j-2,2k-2)^2 - \sum_{j,k=1}^{\Ntu}\beta_n(2j-1,2k-2)^2 +
&=& \sum_{\ell=1}^{2 \lfloor{nt}/{2} \rfloor}
\beta_n(\ell-1,\ell-1)^2 + 2\sum
_{k\le j-1} \beta_n(2k-1,2j-1)^2 \\
&&{} + 2\sum
_{k\le j-1} \beta_n(2k-2,2j-2)^2
- 2\sum_{k\le j-1} \beta_n(2k-2,2j-1)^2\\
&&{}
- 2\sum_{k\le j-1} \beta_n(2k-1,2j-2)^2.
\end{eqnarray*}
For the first sum, since $F_{{\ell}/{n}} - F_{({\ell
-1})/{n}}$ is
Gaussian, we have
\[
\beta_n(\ell-1,\ell-1)^2 = \bigl( {\mathbb E} \bigl[
(F_{{\ell}/{n}} - F_{({\ell-1})/{n}} )^2 \bigr]
\bigr)^2 = \tfrac13 {\mathbb E} \bigl[ (F_{{\ell}/{n}} -
F_{({\ell-1})/{n}} )^4 \bigr].
\]
By Theorem 3.7 of~\cite{Swanson10},
\[
\sum_{\ell=1}^{\lfloor nt\rfloor} (F_{{\ell}/{n}} -
F_{({\ell-1})/{n}} )^4 \longrightarrow\frac6\pi\int
_0^t \bigl(u\bigl(q(s),s\bigr)
\bigr)^{-2}\,ds
\]
in $L^2$ as $n \to\infty$. For the second sum, assume $1\le k<j$, and
we study the term
\begin{eqnarray*}
&&
\beta_n(2k-1,2j-1) \\
&&\qquad= \rho\biggl(\frac{2k}{n},
\frac{2j}{n} \biggr) - \rho\biggl(\frac{2k-1}{n}, \frac{2j}{n}
\biggr) - \rho\biggl(\frac{2k}{n}, \frac{2j-1}{n} \biggr) + \rho
\biggl(\frac{2k-1}{n}, \frac
{2j-1}{n} \biggr)
\\
&&\qquad= \theta\biggl(\frac{2j}{n} \biggr)\int_{(2k-1)/{n}}^{{2k}/{n}}
\biggl[\theta'(r)\tilde{\rho} \biggl(r,\frac{2j}{n} \biggr)
+ \theta(r)\partial_r \tilde{\rho} \biggl(r,\frac{2j}{n}
\biggr) \biggr]\,dr
\\
&&\qquad\quad{} -\theta\biggl(\frac{2j-1}{n} \biggr)\int_{(2k-1)/{n}}^{{2k}/{n}}
\biggl[\theta'(r)\tilde{\rho} \biggl(r,\frac{2j-1}{n} \biggr)
+ \theta(r)\partial_r \tilde{\rho} \biggl(r,\frac{2j-1}{n}
\biggr) \biggr]\,dr.
\end{eqnarray*}
We can write this as
%
%e33 #&#
%e34 #&#
%e35 #&#
\begin{eqnarray}
\label{betana}\qquad
&&\theta\biggl(\frac{2j}{n} \biggr)\int_{(2k-1)/{n}}^{{2k}/{n}}
\theta(r) \biggl(\partial_r\tilde{\rho} \biggl(r,\frac{2j}{n}
\biggr) -\partial_r \tilde{\rho} \biggl(r,\frac{2j-1}{n}
\biggr) \biggr)\,dr
\\
\label{betanb}
&&\qquad{} + \biggl[ \theta\biggl(\frac{2j}{n} \biggr)-\theta\biggl(
\frac
{2j-1}{n} \biggr) \biggr]\int_{(2k-1)/{n}}^{{2k}/{n}}
\theta(r) \biggl(\partial_r\tilde{\rho} \biggl(r,\frac{2j-1}{n}
\biggr) \biggr)\,dr
\\
\label{betanc}
&&\qquad{} + \int_{(2k-1)/{n}}^{{2k}/{n}} \theta'(r)
\biggl[ \theta\biggl(\frac{2j}{n} \biggr)\tilde{\rho} \biggl(r,
\frac{2j}{n} \biggr) - \theta\biggl(\frac{2j-1}{n} \biggr)\tilde{
\rho} \biggl(r,\frac
{2j-1}{n} \biggr) \biggr]\,dr.
\end{eqnarray}
The first task is to show that components (\ref{betanb}) and (\ref
{betanc}) have a negligible contribution to $\eta(t)$. For (\ref
{betanb}), it follows from (\ref{thetadiff}) that
%
%e36 #&#
\begin{equation}
\label{Deltatheta}\biggl\llvert\theta\biggl(\frac
{2j}{n}
\biggr) - \theta\biggl(\frac{2j-1}{n} \biggr)\biggr\rrvert\le
Cn^{-1},
\end{equation}
and using (\ref{rhosqr}), we have
\begin{eqnarray*}
&&
\int_{(2k-1)/{n}}^{{2k}/{n}} \theta(r)\partial_r
\tilde{\rho} \biggl(r,\frac{2j-1}{n} \biggr)\,dr \\
&&\qquad= \int_{(2k-1)/{n}}^{{2k}/{n}}
p \biggl(\frac{2j-1}{n}-r,q \biggl(\frac{2j-1}{n} \biggr)-q(r)
\biggr)\,dr \\
&&\qquad\le Cn^{-{1/2}}.
\end{eqnarray*}
Hence, the contribution of (\ref{betanb}) to the sum of $\beta
_n(2k-1,2j-1)^2$ is bounded by $C (n^{-{3/2}} )^2\cdot n^2
\le Cn^{-1}$.
We can write component (\ref{betanc}) as
\begin{eqnarray*}
&&
\int_{(2k-1)/{n}}^{{2k}/{n}} \theta'(r) \biggl[
\theta\biggl(\frac
{2j}{n} \biggr) \biggl( \tilde{\rho} \biggl( r,
\frac{2j}{n} \biggr)-\tilde{\rho} \biggl( r, \frac{2j-1}{n} \biggr)
\biggr) \\
&&\qquad\hspace*{45pt}{}+ \biggl(\theta\biggl(\frac
{2j}{n} \biggr) - \theta\biggl(
\frac{2j-1}{n} \biggr) \biggr)\tilde{\rho} \biggl(r, \frac{2j-1}{n}
\biggr) \biggr]\,dr.
\end{eqnarray*}
Using (\ref{rhosqr}), we have for each $r\in[\frac
{2k-1}{n},\frac
{2k}{n} ]$,
\[
\biggl\llvert\tilde{\rho} \biggl( r, \frac{2j}{n} \biggr)-\tilde{\rho}
\biggl( r, \frac{2j-1}{n} \biggr)\biggr\rrvert\le Cn^{-
1/2}(2j-2k-1)^{-{1/2}}.
\]
Then, using (\ref{Deltatheta}) and (\ref{thetadiff}), we have (\ref
{betanc}) bounded by
\[
C \bigl[ n^{-{1/2}}(2j-2k-1)^{-{1/2}} + n^{-1}
\bigr]n^{-1}.
\]
Hence, the contribution of (\ref{betanc}) to the sum of $\beta
_n(2k-1,2j-1)^2$ is bounded~by
\[
Cn^{-2}\sum_{j=1}^{ \lfloor{nt/2} \rfloor}\sum
_{k=1}^{j-1} \bigl[ n^{-1}(2j-2k-1)^{-1}
+ n^{-2} \bigr]\le Cn^{-1}.
\]

We now turn to component (\ref{betana}). By (\ref{rhosqr}),
\[
\theta(r)\frac{\partial}{\partial r}\tilde{\rho} \biggl(r, \frac
{2j}{n} \biggr) =
\frac12 p \biggl(\frac{2j}{n}-r,q \biggl(\frac
{2j}{n} \biggr)-q(r)
\biggr).
\]
To simplify notation, define
\[
\psi_n(j,r) = e^{-{(q({j}/{n})-q(r))^2}/({2({j}/{n}-r)})}.
\]
By (\ref{qdiff}), we have for the interval $I_{2k} = [\frac
{2k-1}{n}, \frac{2k}{n} ]$,
\[
\sup_{r\in I_{2k}} \biggl\{ \frac{ ((q ({2j}/{n}
)-q(r) )^2}{2({2j}/{n}-r)} \biggr\} \le
\frac{C(2j-2k+1)}{n}.\vadjust{\goodbreak}
\]
This implies that $\inf\{\psi_n(2j,r), r\in I_{2k} \} \ge
e^{-C({2j-2k+1})/{n}}$, and hence when $j,k$ are small compared to
$n$, $|\psi|$ is close to unity. We can write
%
%e37 #&#
%e38 #&#
%e39 #&#
\begin{eqnarray}
\label{betand}
&&
\int_{(2k-1)/{n}}^{{2k}/{n}}
\theta(r) \biggl(\partial_r\tilde{\rho} \biggl(r,
\frac{2j}{n} \biggr) - \partial_r\tilde{\rho} \biggl(r,
\frac{2j-1}{n} \biggr) \biggr)\,dr \nonumber\\[-8pt]\\[-8pt]
&&\qquad= \frac{1}{2\sqrt{2\pi}}\int
_{(2k-1)/{n}}^{{2k}/{n}} \frac
{1}{\sqrt{{2j}/{n} - r}} -
\frac{1}{\sqrt{({2j-1})/{n} -
r}}\,dr\nonumber
\\
\label{betane}
&&\qquad\quad{} -\frac{1}{2\sqrt{2\pi}}\int
_{(2k-1)/{n}}^{{2k}/{n}} \bigl(1-\psi_n(2j-1,r)
\bigr) \nonumber\\[-8pt]\\[-8pt]
&&\hspace*{83pt}\qquad\quad{}\times\biggl(\frac{1}{\sqrt{{2j}/{n} - r}} - \frac
{1}{\sqrt{({2j-1})/{n} - r}} \biggr)\,dr
\nonumber\\
\label{betanf}
&&\qquad\quad{} +\frac{1}{2\sqrt{2\pi}}\int
_{(2k-1)/{n}}^{{2k}/{n}} \frac
{\psi_n(2j,r)-\psi_n(2j-1,r)}{\sqrt{{2j}/{n}-r}}\,dr.
\end{eqnarray}
For component (\ref{betane}), by the above estimate for $\inf
\{
\psi_n(2j,r), r\in I_{2k} \}$, we have
\[
\sup_{r\in I_{2k}} \bigl\llvert1-\psi(2j,r)\bigr\rrvert\le
Cn^{-1}(2j-2k+1) \le1,
\]
hence (\ref{betane}) is bounded by
\[
Cn^{-{3/2}}(2j-2k+1) ( \sqrt{2j-2k+1} - 2\sqrt{2j-2k} + \sqrt{2j-2k-1}
).
\]
Given $\varepsilon> 0$, we can find an $M>1$ such that
\[
\sum_{m=M}^{\infty} ( \sqrt{2m+1} - 2
\sqrt{2m} + \sqrt{2m-1} )^2 < \varepsilon.
\]
The contribution of (\ref{betane}) to the sum of $\beta_n(2k-1,2j-1)^2$
is thus bounded by
\begin{eqnarray*}
&&(2\pi n)^{-1}\sum_{j=1}^{ \lfloor{nt/2} \rfloor}
\theta^2 \biggl(\frac{2j}{n} \biggr)\sum
_{k=1}^{j-1} \sup_{r\in I_{2k}}\bigl(1-
\psi_n(2j,r)\bigr)^2\\
&&\qquad\hspace*{96.5pt}{}\times ( \sqrt{2j-2k+1} - 2\sqrt{2j-2k} +
\sqrt{2j-2k-1} )^2
\\
&&\qquad\le Cn^{-1}\sum_{j=1}^{ \lfloor{nt/2} \rfloor}
\sum_{k=1}^{j-M-1} ( \sqrt{2j-2k+1} - 2
\sqrt{2j-2k} + \sqrt{2j-2k-1} )^2
\\
&&\qquad\quad{} + Cn^{-1}\sum_{j=1}^{ \lfloor{nt/2} \rfloor}
\sum_{k=j-M}^{j-1} Cn^{-1}(2j-2k+1)
\\
&&\qquad\quad\hspace*{93.7pt}{}\times( \sqrt{2j-2k+1} - 2\sqrt{2j-2k} +
\sqrt{2j-2k-1} )^2
\\
&&\qquad\le Cn^{-1}\sum_{j=1}^{ \lfloor{nt/2} \rfloor}
\varepsilon%\sum_{m=M}^\infty\left(
+ Cn^{-1}\sum
_{j=1}^{ \lfloor{nt/2} \rfloor} \frac
{M^2}{n^2},% \sum_{m=1}^M\left(
\end{eqnarray*}
which is less than $C\varepsilon$ as $n \to\infty$, since $\theta(t)$
is bounded.

For (\ref{betanf}), by we have $\sup\{\llvert\psi_n(2j,r)-\psi
_n(2j-1,r)\rrvert, r\in I_{2k} \} \le Cn^{-1}$, and hence (\ref
{betanf}) is bounded by $Cn^{-{3/2}}(2j-2k-1)^{-{1/2}}$.
Therefore the contribution of the term including (\ref{betanf}) to the
sum of $\beta_n(2k-1,2j-1)^2$ is bounded by
\[
Cn^{-3}\sum_{j=1}^{ \lfloor{nt/2} \rfloor} \sum
_{k=1}^{j-1}(2j-2k-1)^{-1} \le
Cn^{-2}\log(nt),
\]
because $\theta(t)$ is bounded.
%%%%%%%%%%%%%%%%%%%%%%

It follows that the sum of $\beta_n(2k-1,2j-1)^2$ is dominated by
(\ref{betana}), and the significant term in (\ref{betana}) is
given by
(\ref{betand}). Hence, it is enough to consider
\[
\frac{2}{n\pi} \sum_{j \le k-1} \theta^2
\biggl(\frac{2j}{n} \biggr) ( \sqrt{2j-2k+1}-2\sqrt{2j-2k}+\sqrt{2j-2k-1}
)^2.
\]
Using the change of index $j = k+m$, this is
\[
\frac{2}{n\pi}\sum_{j=1}^{ \lfloor{nt/2} \rfloor}
\theta^2 \biggl(\frac{2j}{n} \biggr) \sum
_{m=1}^{j-1} ( \sqrt{2m+1}-2\sqrt{2m}+\sqrt{2m-1}
)^2.
\]
Taking $n\to\infty$, this behaves like
\[
\frac{a}{\pi} \int_0^t
\theta^2(s)\,ds,
\]
where
\[
a = \sum_{m=1}^\infty( \sqrt{2m+1}-2
\sqrt{2m}+\sqrt{2m-1} )^2.
\]
By similar computation,
\begin{eqnarray*}
\sum_{k \le j-1} \beta_n(2k-2,2j-2)^2
&\longrightarrow&\frac{a}{\pi} \int_0^t
\theta^2(s)\,ds,
\\
\sum_{k \le j-1} \beta_n(2k-2,2j-1)^2
&\longrightarrow&\frac
{b_1}{\pi} \int_0^t
\theta^2(s)\,ds
\end{eqnarray*}
and
\[
\sum_{k \le j-1} \beta_n(2k-1,2j-2)^2
\longrightarrow
\frac{b_2}{\pi} \int_0^t
\theta^2(s)\,ds,
\]
where
\begin{eqnarray*}
b_1 &=& \sum_{m=1}^\infty(
\sqrt{2m+2}-2\sqrt{2m+1}+\sqrt{2m} )^2,
\\
b_2 &=& \sum_{m=1}^\infty(
\sqrt{2m}-2\sqrt{2m-1}+\sqrt{2m-2} )^2.
\end{eqnarray*}
We have proved the following result:
%
%pr5.9 #&#
\begin{proposition}\label{pr5.9} Under the above assumptions,
$\rho(r,t)$ satisfies
condition~\textup{(v)} of Section~\ref{sec4}, where
\[
\eta(t) = \frac{2+4a-2b_1-2b_2}{\pi} \int_0^t \bigl(u
\bigl(q(s),s\bigr) \bigr)^{-2}\,ds.
\]
\end{proposition}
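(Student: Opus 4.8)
The plan is to verify condition (v) by assembling the limit computations carried out in the discussion above. Throughout write $\theta(s)=(u(q(s),s))^{-1}$ and $I_{2k}=[\tfrac{2k-1}{n},\tfrac{2k}{n}]$. The starting point is the decomposition
$$\eta_n^+(t)-\eta_n^-(t)=\sum_{\ell=1}^{2\Ntu}\beta_n(\ell-1,\ell-1)^2+2\sum_{k\le j-1}\Bigl(\beta_n(2k-1,2j-1)^2+\beta_n(2k-2,2j-2)^2-\beta_n(2k-2,2j-1)^2-\beta_n(2k-1,2j-2)^2\Bigr),$$
which reduces the problem to one diagonal limit and four off-diagonal limits. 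For the diagonal sum I would use that $F_{\ell/n}-F_{(\ell-1)/n}$ is centered Gaussian, so $\beta_n(\ell-1,\ell-1)^2=\tfrac13{\mathbb E}\bigl[(F_{\ell/n}-F_{(\ell-1)/n})^4\bigr]$, and then quote Theorem 3.7 of \cite{Swanson10}, which gives the $L^2$ (hence $L^1$) convergence of $\sum_\ell(F_{\ell/n}-F_{(\ell-1)/n})^4$ to $\tfrac6\pi\int_0^t\theta^2(s)\,ds$; taking expectations yields $\sum_\ell\beta_n(\ell-1,\ell-1)^2\to\tfrac2\pi\int_0^t\theta^2(s)\,ds$.

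For the four off-diagonal sums the scheme is the same, so I would carry it out in detail only for $2\sum_{k<j}\beta_n(2k-1,2j-1)^2$. Using the factorization $\rho(r,t)=\theta(r)\theta(t)\rrt$ and \req{rho_sq_r}, $\beta_n(2k-1,2j-1)$ splits into a main term $\theta(\tfrac{2j}{n})\int_{I_{2k}}\theta(r)\bigl(\partial_r\tilde\rho(r,\tfrac{2j}{n})-\partial_r\tilde\rho(r,\tfrac{2j-1}{n})\bigr)dr$, a term carrying the factor $\theta(\tfrac{2j}{n})-\theta(\tfrac{2j-1}{n})$, and a term carrying $\theta'$; the last two contribute only $O(n^{-1})$ to the relevant sum of squares, by the Lipschitz bounds \req{theta_diff}, \req{Delta_theta} for $\theta$, the estimate $|\tilde\rho(r,\tfrac{2j}{n})-\tilde\rho(r,\tfrac{2j-1}{n})|\le Cn^{-1/2}(2j-2k-1)^{-1/2}$ coming from \req{rho_sq_r}, and summation against $\sum_{k<j}(2j-2k-1)^{-1}\le C\log(nt)$. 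In the main term one has $\theta(r)\partial_r\tilde\rho(r,\tfrac{2j}{n})=\tfrac1{2\sqrt{2\pi}}(\tfrac{2j}{n}-r)^{-1/2}\psi_n(2j,r)$ with $\psi_n(2j,r)=\exp\!\bigl(-\tfrac{(q(2j/n)-q(r))^2}{2(2j/n-r)}\bigr)$; writing $\psi_n=1-(1-\psi_n)$ peels off the pure difference $\tfrac1{2\sqrt{2\pi}}\int_{I_{2k}}\bigl((\tfrac{2j}{n}-r)^{-1/2}-(\tfrac{2j-1}{n}-r)^{-1/2}\bigr)dr$, which equals $\tfrac{n^{-1/2}}{\sqrt{2\pi}}\bigl(\sqrt{2j-2k+1}-2\sqrt{2j-2k}+\sqrt{2j-2k-1}\bigr)$, plus two corrections. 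The correction built from $\psi_n(2j,r)-\psi_n(2j-1,r)$ is $O(n^{-3/2}(2j-2k-1)^{-1/2})$ by \req{exp_q_diff} and contributes $O(n^{-2}\log(nt))$; the correction built from $1-\psi_n(2j-1,r)$ is controlled by $|1-\psi_n(2j,r)|\le Cn^{-1}(2j-2k+1)$ on $I_{2k}$, a consequence of \req{q_diff}, and its contribution is killed by splitting the inner sum at $k=j-M$, where $M$ is chosen so that $\sum_{m\ge M}(\sqrt{2m+1}-2\sqrt{2m}+\sqrt{2m-1})^2<\varepsilon$. What survives is the pure second difference; after the change of index $j=k+m$ and the recognition of a Riemann sum one obtains $2\sum_{k<j}\beta_n(2k-1,2j-1)^2\to\tfrac{2a}{\pi}\int_0^t\theta^2(s)\,ds$ with $a=\sum_{m\ge1}(\sqrt{2m+1}-2\sqrt{2m}+\sqrt{2m-1})^2$. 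The same argument applied to $\beta_n(2k-2,2j-2)$, $\beta_n(2k-2,2j-1)$ and $\beta_n(2k-1,2j-2)$ produces the limits $\tfrac{2a}{\pi}$, $\tfrac{2b_1}{\pi}$ and $\tfrac{2b_2}{\pi}$ times $\int_0^t\theta^2(s)\,ds$, with $b_1,b_2$ as in the statement.

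Adding the five pieces gives
$$\eta^+(t)-\eta^-(t)=\Bigl(\tfrac2\pi+\tfrac{2a}\pi+\tfrac{2a}\pi-\tfrac{2b_1}\pi-\tfrac{2b_2}\pi\Bigr)\int_0^t\theta^2(s)\,ds=\frac{2+4a-2b_1-2b_2}{\pi}\int_0^t\bigl(u(q(s),s)\bigr)^{-2}\,ds,$$
and, since each of the five sums converges, $\eta^+(t)$ and $\eta^-(t)$ themselves exist; they are nonnegative as limits of sums of squares, and nondecreasing (indeed continuous) in $t$ because $\theta^2$ is nonnegative and locally bounded. The step I expect to be the main obstacle is the uniform treatment of the position factors $\psi_n$ over the full index range: the estimate $|1-\psi_n(2j,r)|\le Cn^{-1}(2j-2k+1)$ becomes useless once $j-k$ is of order $n$, so one must cut the inner sum at $k=j-M$ and exploit the summability of $\sum_m(\sqrt{2m+1}-2\sqrt{2m}+\sqrt{2m-1})^2$, arranging that both the tail piece and the near-diagonal piece vanish as $n\to\infty$ --- which is exactly why condition (v) only requires that $\eta^+(t)$ and $\eta^-(t)$ exist, rather than demanding a uniform rate.
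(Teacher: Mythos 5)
Your proposal is correct and follows essentially the same route as the paper: the same splitting of $\eta_n^+-\eta_n^-$ into the diagonal sum (handled via Gaussianity and Theorem 3.7 of \cite{Swanson10}) and four off-diagonal sums, the same factorization $\rho=\theta(r)\theta(t)\rrt$ isolating the main term from the $\Delta\theta$ and $\theta'$ corrections, and the same $\psi_n=1-(1-\psi_n)$ peeling with the $M$-truncation to reduce to the pure second-difference Riemann sum. Nothing essential is missing.
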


The coefficient $2+ 4a - 2b_1 - 2b_2$ is approximately 1.3437, while
$u(q(t),t)$ depends on $f$ and $\alpha$.

%%%%%%%%%%%%%%%%%%%%%%%%%%%%%%%%%%%%%%%%%
%s6 #&#
\section{Proof of the technical lemmas}\label{sec6}
We begin with two technical lemmas. The first is a version of Corollary
\ref{co4.2} with disjoint intervals.

%%%%%%Lemma 6.1
%
%le6.1 #&#
\begin{lemma}\label{le6.1}
For $0 \le t_0 < t_1 \le t_2 < t_3 \le T$,
\[
\lim_{n \to\infty} \sum_{j= \lfloor{nt_0}/{2}
\rfloor+1}^{\lfloor{nt_1}/{2}\rfloor}\sum
_{k=\lfloor{nt_2}/{2}\rfloor
+1}^{\lfloor{nt_3}/{2} \rfloor}\bigl\llvert\bigl\langle
\partial_{(2j-1)/{n}}^{\otimes2} - \partial_{({2j-2})/{n}}^{\otimes2},
\partial_{(2k-1)/{n}}^{\otimes2} - \partial_{(2k-2)/{n}}^{\otimes2} \bigr\rangle_{\hten^{\otimes2}} \bigr\rrvert=0.
\]
\end{lemma}
\begin{pf} We may assume $t_0 = 0$ and $t_1 = t_2$. Observe that
\begin{eqnarray*}
&&
\bigl\langle\partial_{({2j-1})/{n}}^{\otimes2} - \partial_{(2j-2)/{n}}^{\otimes2}, \partial_{(2k-1)/{n}}^{\otimes2} -
\partial_{(2k-2)/{n}}^{\otimes2} \bigr\rangle_{\hten^{\otimes2}}
\\
&&\qquad= \beta_n(2j-1,2k-1)^2 -\beta_n(2j-1,2k-2)^2
- \beta_n(2j-2,2k-1)^2\\
&&\qquad\quad{} +\beta_n(2j-2,2k-2)^2.
\end{eqnarray*}
Therefore, it is enough to show that
%
%e40 #&#
\begin{equation}
\label{betaX2} \sum_{j=0}^{\lfloor nt_2 \rfloor}
\sum_{k=\lfloor nt_2
\rfloor+1}^{\lfloor nt_3 \rfloor} \beta_n(j,k)^2
\le Cn^{-\varepsilon}
\end{equation}
for some $\varepsilon> 0$. We can decompose the sum in (\ref
{betaX2}) as
\[
\sum_{k=\lfloor nt_2\rfloor+1}^{\lfloor nt_3 \rfloor} \beta_n(0,k)^2
+ \sum_{k=\lfloor nt_2\rfloor+1}^{\lfloor nt_3 \rfloor} \beta_n
\bigl(\lfloor nt_2\rfloor,k\bigr)^2 + \sum
_{j=1}^{\lfloor nt_2 \rfloor-1} \sum_{k=\lfloor nt_2 \rfloor
+1}^{\lfloor nt_3 \rfloor}
\beta_n(j,k)^2.
\]
By condition (iv), for some $\gamma> 0$ we have
\begin{eqnarray*}
\sum_{k=\lfloor nt_2\rfloor+1}^{\lfloor nt_3 \rfloor} \beta_n(0,k)^2
&\le&\sup_{1\le j \le\lfloor nt_3 \rfloor} \bigl|\beta_n(0,k)\bigr| \sum
_{k=\lfloor nt_2\rfloor+1}^{\lfloor nt_3\rfloor} \bigl|\beta_n(0,k)\bigr|
\\[-2pt]
& \le &Cn^{-1}\sum_{k=\lfloor nt_2\rfloor+2}^{\lfloor nt_3\rfloor
}(k-1)^{-\gamma}
+Cn^{-1} \le Cn^{-\gamma}.
\end{eqnarray*}
By condition (ii), for some $1 < \alpha\le\frac{3}{2}$,
\begin{eqnarray*}
\sum_{k=\lfloor nt_2\rfloor+1}^{\lfloor nt_3 \rfloor} \beta_n\bigl(
\lfloor nt_2\rfloor,k\bigr)^2 &\le&\beta_n
\bigl(\lfloor nt_2 \rfloor, \lfloor nt_2 \rfloor+1
\bigr)^2 + Cn^{-1}\sum_{k=\lfloor nt_2\rfloor+2}^{\lfloor nt_3
\rfloor}
\beta_n\bigl(\lfloor nt_2\rfloor,k\bigr)
\\[-2pt]
&\le &Cn^{-1} +Cn^{-1}\sum_{k=\lfloor nt_2\rfloor+1}^{\lfloor nt_3
\rfloor}
\bigl(k-\lfloor nt_2\rfloor\bigr)^{-\alpha} \le
Cn^{-1}
\end{eqnarray*}
and again by condition (ii), for $\beta= \frac{3}{2}-\alpha$,
\begin{eqnarray*}
&&
\sum_{j=1}^{\lfloor nt_2 \rfloor-1} \sum
_{k=\lfloor nt_2 \rfloor
+1}^{\lfloor nt_3 \rfloor} \beta_n(j,k)^2 \\[-2pt]
&&\qquad
\le Cn^{-1}\sum_{j=1}^{\lfloor nt_2 \rfloor-1} \sum
_{k=\lfloor nt_2
\rfloor+1}^{\lfloor nt_3 \rfloor} \bigl[ \bigl(k-\lfloor
nt_2\rfloor\bigr)^{-\alpha}j^{-\beta} +
(k-j)^{-{3/2}} \bigr]
\\[-2pt]
&&\qquad
\le Cn^{-1} \Biggl(\sum_{k=1}^{\lfloor nt_3\rfloor}
k^{-\alpha
} \Biggr) \Biggl(\sum_{j=1}^{\lfloor nt_2 \rfloor}
j^{-\beta} \Biggr) +Cn^{-1}\sum_{j=1}^{\lfloor nt_2 \rfloor}
\bigl(\lfloor nt_2 \rfloor- j\bigr)^{-
{1/2}}
\\[-2pt]
&&\qquad\le Cn^{-\beta} + Cn^{-{1/2}};
\end{eqnarray*}
hence the sum is bounded by $Cn^{-\varepsilon}$ for $\varepsilon=
\min
\{\beta, \gamma, \frac{1}{2} \}$.
\end{pf}
%
%le6.2 #&#
\begin{lemma}\label{le6.2}
For $0 \le t \le T$ and integer $j \ge1$,
\[
\bigl\llvert\langle\varepsilon_t, \partial_{j/n}
\rangle_\hten\bigr\rrvert\le Cn^{-{1/2}}
\]
for a positive constant $C$ which depends on $T$.
\end{lemma}
\begin{pf} By conditions (i) and (ii), we have for $j\ge1$ and $t > 0$,
%
%e41 #&#
\begin{eqnarray}
\label{cond2cor}
\bigl\llvert\langle\varepsilon_t, \partial_{j/n}
\rangle_\hten\bigr\rrvert&\le&\sum_{k=0}^{\lfloor nt \rfloor-1 }
\bigl\llvert\langle\partial_{{k}/{n}}, \partial_{j/n}
\rangle_\hten\bigr\rrvert+ \bigl\llvert\langle\varepsilon_t
- \varepsilon_{\lfloor nt \rfloor}, \partial_{{j}/{n}} \rangle_\hten
\bigr\rrvert
\nonumber\\[-8pt]\\[-8pt]
&\le& C\sum_{k=0}^{\infty}
n^{-{1/2}} \bigl( |j-k|^{-\alpha} \wedge1 \bigr)+O
\bigl(n^{-{1/2}}\bigr) \le Cn^{-{1/2}}.\nonumber
\end{eqnarray}
\upqed
\end{pf}

%s6.1 #&#
\subsection{\texorpdfstring{Proof of Lemma \protect\ref{le4.4}}{Proof of Lemma 4.4}}\label{sec6.1}
By the Lagrange theorem for the Taylor expansion remainder, the terms
$R_0(W_{{2j}/{n}}), R_1(W_{({2j-2})/{n}})$ can be expressed in
integral form,
\[
R_0(W_{{2j}/{n}}) = \frac{1}{2} \int_{W_
{({2j-1})/{n}}}^{W_{{2j}/{n}}}
(W_{{2j}/{n}} - u)^2 f^{(3)}(u) \,du
\]
and
\[
R_1(W_{({2j-2})/{n}}) = -\frac{1}{2} \int
_{W_{(2j-2)/{n}}}^{W_{({2j-1})/{n}}} (W_{({2j-2})/{n}} - u)^2
f^{(3)}(u) \,du.
\]
After a change of variables, we obtain
\begin{eqnarray*}
&&
R_0(W_{{2j}/{n}}) \\
&&\qquad= \frac{1}{2} (W_{{2j}/{n}} -
W_{(2j-1)/{n}})^3 \int_0^1
v^2 f^{(3)}\bigl(vW_{({2j-1})/{n}} + (1-v)W_{{2j}/{n}}
\bigr) \,dv
\end{eqnarray*}
and
\begin{eqnarray*}
R_1(W_{({2j-2})/{n}})
&=& \frac{1}{2} (W_{({2j-2})/{n}} -
W_{(2j-1)/{n}})^3 \\
&&{}\times\int_0^1
v^2 f^{(3)}\bigl(vW_{({2j-1})/{n}} + (1-v)W_{(2j-2)/{n}}
\bigr) \, dv.
\end{eqnarray*}
Define
\[
G_0(2j) = \frac{1}{2} \int_0^1
v^2 f^{(3)}\bigl(vW_{({2j-1})/{n}} + (1-v)W_{{2j}/{n}}
\bigr) \, dv
\]
and
\[
G_1(2j-2) = \frac{1}{2} \int_0^1
v^2 f^{(3)}\bigl(vW_{({2j-1})/{n}} + (1-v)W_{({2j-2})/{n}}
\bigr) \, dv.
\]
%
%Observe that for $i=0,1$ and $0 \le\ell\le\Ntu$ we have $$|G_i(
We may assume $r=0$. Define $\Delta W_{{\ell}/{n}} = W_{({\ell
+1})/{n}} - W_{{\ell}/{n}}$. We want to show that
%
%e42 #&#
\begin{eqnarray}
\label{G0G1}
&&
{\mathbb E} \Biggl[ \Biggl(\sum_{j=1}^{ \lfloor{nt/2} \rfloor}
\bigl\{G_0(2j) \Delta W^3_{({2j-1})/{n}} +
G_1(2j-2)\Delta W^3_{({2j-2})/{n}} \bigr\}
\Biggr)^2 \Biggr] \nonumber\\[-8pt]\\[-8pt]
&&\qquad\le C \biggl\lfloor\frac{nt}{2} \biggr\rfloor
n^{-{3/2}}.\nonumber
\end{eqnarray}

This part of the proof was inspired by a computation in~\cite{NoRev};
see Lemma 4.2. Consider the Hermite polynomial identity $x^3 = H_3(x) +
3H_1(x)$. We use the map $\delta^q (h^{\otimes q}) = H_q(W(h))$ [see
(\ref{Hmap}) in Section~\ref{sec2}], for $h \in\hten$ with $\| h \|_\hten=1$.
For each~$j$, let $w_j:= \|\Delta W_{j/n} \|_\hten$, and note
that condition (i) implies $w_j \le Cn^{-{1/4}}$ for all $j$. Then
\begin{eqnarray*}
\frac{\Delta W_{j/n}^3}{w_j^3} &=& H_3 \biggl(\frac
{\Delta W_{j/n}}{w_j}
\biggr)+3H_1 \biggl(\frac{\Delta
W_{{j}/{n}}}{w_j} \biggr) \\
&=&
\delta^3 \biggl(\frac{\partial_{j/n}^{\otimes
3}}{w_j^3} \biggr) +3\delta\biggl(
\frac{\partial_{j/n}}{w_j} \biggr)
\end{eqnarray*}
so that
\[
\Delta W_{j/n}^3 =
\delta^3 \bigl(\partial_{{j}/{n}}^{\otimes3} \bigr) +
3w_j^2 \delta(\partial_{j/n}).
\]
It follows that we can write
\begin{eqnarray*}
&&G_0(2j) \Delta W^3_{({2j-1})/{n}}
-G_1(2j-2) \Delta W_{({2j-2})/{n}}^3
\\
&&\qquad = G_0(2j) \delta^3\bigl(\partial_{({2j-1})/{n}}^{\otimes
3}
\bigr)-G_1(2j-2) \delta^3\bigl(\partial_{({2j-2})/{n}}^{\otimes3}
\bigr)
\\
&&\qquad\quad{} +3w_{2j}^2 G_0(2j)\delta(
\partial_{({2j-1})/{n}}) - 3w_{2j-1}^2 G_1(2j-2)
\delta(\partial_{({2j-2})/{n}}).
\end{eqnarray*}
It is enough to verify the individual inequalities
%
%e43 #&#
%e44 #&#
%e45 #&#
\begin{eqnarray}
\label{triple2j} {\mathbb E} \Biggl[\Biggl\llvert\sum
_{j=1}^{ \lfloor{nt/2} \rfloor} G_0(2j)\delta^3
\bigl(\partial_{({2j-1})/{n}}^{\otimes3}\bigr)\Biggr\rrvert^2
\Biggr] &\le& C \biggl\lfloor\frac{nt}{2} \biggr\rfloor n^{-{3/2}},
\\
\label{triple2j-2} {\mathbb E} \Biggl[\Biggl\llvert\sum
_{j=1}^{ \lfloor{nt/2} \rfloor} G_1(2j-2)
\delta^3\bigl(\partial_{({2j-2})/{n}}^{\otimes3}\bigr)\Biggr
\rrvert^2 \Biggr] &\le& C \biggl\lfloor\frac{nt}{2} \biggr\rfloor
n^{-{3/2}},
\\
\label{single2j} {\mathbb E} \Biggl[\Biggl\llvert\sum
_{j=1}^{ \lfloor{nt/2} \rfloor} w_{2j}^2G_0(2j)
\delta(\partial_{({2j-1})/{n}})\Biggr\rrvert^2 \Biggr] &\le& C \biggl
\lfloor\frac{nt}{2} \biggr\rfloor n^{-{3/2}}
\end{eqnarray}
and
%
%e46 #&#
\begin{equation}
\label{single2j-2} {\mathbb E} \Biggl[\Biggl\llvert\sum
_{j=1}^{ \lfloor{nt/2} \rfloor} w_{2j-1}^2G_1(2j-2)
\delta(\partial_{({2j-2})/{n}})\Biggr\rrvert^2 \Biggr] \le C \biggl
\lfloor\frac{nt}{2} \biggr\rfloor n^{-{3/2}}.
\end{equation}
We will show (\ref{triple2j}) and (\ref{single2j}), with (\ref{triple2j-2})
and (\ref{single2j-2}) being essentially similar.\vspace*{5pt}

\textit{Proof of} (\ref{triple2j}). Using (\ref{multi}) and the
duality property,
\begin{eqnarray*}
&&{\mathbb E} \Biggl[ \Biggl(\sum_{j=1}^{ \lfloor{nt/2} \rfloor}
G_0(2j)\delta^3\bigl(\partial_{({2j-1})/{n}}^{\otimes3}
\bigr) \Biggr)^2 \Biggr]
\\
&&\qquad= {\mathbb E} \sum_{j,k=1}^{ \lfloor{nt/2} \rfloor} \Biggl[
G_0(2j)G_0(2k) \\
&&\qquad\quad\hspace*{36pt}{}\times\Biggl(\sum_{r=0}^3
\delta^{6-2r}\bigl(\partial_{(2j-1)/{n}}^{\otimes
3-r} \otimes
\partial_{(2k-1)/{n}}^{\otimes3-r}\bigr) \langle\partial_{({2j-1})/{n}},
\partial_{(2k-1)/{n}} \rangle_\hten^r \Biggr) \Biggr]
\\
&&\qquad\le\sum_{j,k=1}^{ \lfloor{nt/2} \rfloor}\sum
_{r=0}^3\bigl\llvert\langle\partial_{(2j-1)/{n}},
\partial_{(2k-1)/{n}} \rangle_\hten^r \bigr\rrvert
\\
&&\qquad\quad\hspace*{37pt}{}\times
{\mathbb E} \bigl[\bigl\llvert\bigl\langle D^{6-2r} \bigl(G_0(2j)G_0(2k)
\bigr), \partial_{({2j-1})/{n}}^{\otimes3-r} \otimes\partial_{(2k-1)/{n}}^{\otimes3-r}
\bigr\rangle_{\hten^{\otimes6-2r}}\bigr\rrvert\bigr].
\end{eqnarray*}
For integers $r \ge0$, we have
%
%e47 #&#
\begin{eqnarray}
\label{DrG0}
D^rG_0(2j) &=& D^r \int_0^1
\frac{1}{2} v^2 f^{(3)} \bigl(vW_{(2j-1)/{n}}+(1-v)W_{{2j}/{n}}
\bigr) \,dv
\nonumber\\
&=& \frac{1}{2}\int_0^1
v^2 f^{(3+r)} \bigl(vW_{(2j-1)/{n}}+(1-v)W_{{2j}/{n}}
\bigr) \\
&&\hspace*{24pt}{}\times
\bigl(v \varepsilon_{(2j-1)/{n}}^{\otimes r} + (1-v)
\varepsilon_{{2j}/{n}}^{\otimes r} \bigr) \,dv.\nonumber
\end{eqnarray}
By product rule and (\ref{DrG0}) we have
%
%e48 #&#
\begin{eqnarray}
\label{D6}
&&
{\mathbb E} \bigl[\bigl\llvert\bigl\langle D^{6-2r}
\bigl(G_0(2j)G_0(2k) \bigr), \partial_{({2j-1})/{n}}^{\otimes3-r}
\otimes\partial_{(2k-1)/{n}}^{\otimes3-r} \bigr\rangle_{\hten
^{\otimes
6-2r}}
\bigr\rrvert\bigr]\nonumber\\
&&\qquad\le C \sum_{a+b=6-2r}{\mathbb E} \Bigl[
\sup_{0\le
v,w\le
1}\bigl\llvert f^{(a)}\bigl(vW_{({2j-1})/{n}}+(1-v)W_{(2j-2)/{n}}
\bigr)\nonumber\\
&&\qquad\quad\hspace*{99pt}{}\times f^{(b)}\bigl(wW_{(2k-1)/{n}}+(1-w)W_{(2k-2)/{n}}\bigr
)\bigr
\rrvert\Bigr]
\nonumber\\[-8pt]\\[-8pt]
&&\hspace*{48.6pt}\qquad\quad{} \times\int_0^1 \int_0^1
\bigl\llvert\bigl\langle\bigl(v\varepsilon_{(2j-1)/{n}}^{\otimes a} +
(1-v)\varepsilon_{{2j}/{n}}^{\otimes
a} \bigr) \nonumber\\
&&\qquad\quad\hspace*{99pt}{}\otimes\bigl(w
\varepsilon_{(2k-1)/{n}}^{\otimes b} + (1-w)\varepsilon_{{2k}/{n}}^{\otimes b}
\bigr),\nonumber\\
&&\qquad\quad\hspace*{151.5pt} \partial_{(2j-1)/{n}}^{\otimes3-r} \otimes\partial_{(2k-1)/{n}}^{\otimes
3-r}
\bigr\rangle_{\hten^{\otimes6-2r}}\bigr\rrvert \,dv
\,dw.\nonumber
\end{eqnarray}
Notice that by condition (0), ${\mathbb E} [\sup\llvert f^{(3+r)}(\xi
)\rrvert^p ]< \infty$, where the supremum is taken over the random
variables
$ \{\xi= vW_{s_1} + (1-v)W_{s_2}, 0\le v\le1, 0\le s_1,s_2 \le
T \}$.
From Lemma~\ref{le6.2}, for integers $a$ and $b$ with $a+b = 6-2r$, we have the
following estimate:
%
%e49 #&#
\begin{eqnarray}
\label{dblint} &&\int_0^1 \int
_0^1 \bigl\llvert\bigl\langle\bigl(v
\varepsilon^{\otimes a}_{({2j-1})/{n}}+(1-v)\varepsilon^{\otimes
a}_{{2j}/{n}}
\bigr)\nonumber\\
&&\qquad\quad\hspace*{1.5pt}{} \otimes\bigl(w\varepsilon^{\otimes b}_{(2k-1)/{n}} + (1-w)
\varepsilon^{\otimes b}_{{2k}/{n}} \bigr),\nonumber\\[-8pt]\\[-8pt]
&&\qquad\quad\hspace*{51.5pt} \partial_{(2j-1)/{n}}^{\otimes3-r}
\otimes\partial_{(2k-1)/{n}}^{\otimes
3-r} \bigr\rangle_{\hten^{\otimes6-2r}}\bigr
\rrvert \,dv \,dw
\nonumber\\
&&\qquad\le Cn^{-(3-r)}.
\nonumber
\end{eqnarray}
It follows that if $r \neq0$, then by Lemma~\ref{le4.1}, equation (\ref{D6}) and
equation (\ref{dblint}),
\begin{eqnarray*}
&&
C \sum_{j,k=1}^{ \lfloor{nt/2} \rfloor} \bigl\llvert\langle
\partial_{(2j-1)/{n}}, \partial_{(2k-1)/{n}} \rangle_\hten^r\bigr\rrvert\\[-2pt]
&&\hspace*{20.2pt}\quad{}\times
{\mathbb E} \bigl[\bigl\llvert\bigl\langle D^{6-r}
\bigl(G_0(2j)G_0(2k) \bigr),\partial_{({2j-1})/{n}}^{\otimes3-r}
\otimes\partial_{(2k-1)/{n}}^{\otimes3-r} \bigr\rangle_{\hten^{\otimes6-2r}}\bigr
\rrvert\bigr]
\\[-2pt]
&&\qquad \le Cn^{r-3} \sum_{j,k=1}^{ \lfloor{nt/2} \rfloor}
\bigl\llvert\langle\partial_{(2j-1)/{n}}, \partial_{(2k-1)/{n}}
\rangle_\hten^r\bigr\rrvert
\\[-2pt]
&&\qquad \le C{ \biggl\lfloor\frac{nt}{2} \biggr\rfloor} n^{{r}/{2} -3},
\end{eqnarray*}
which satisfies (\ref{G0G1}) because $\frac{r}{2} - 3 \le-\frac{3}{2}$.
On the other hand, if $r=0$, then
\[
\sum_{j,k=1}^{ \lfloor{nt/2} \rfloor} Cn^{-3} \le C{
\biggl\lfloor\frac{nt}{2} \biggr\rfloor}n^{-2},
\]
and we are done with (\ref{triple2j}).\vspace*{5pt}

\textit{Proof of} (\ref{single2j}). Proceeding along the same lines
as above,
\begin{eqnarray*}
&&{\mathbb E} \Biggl[ \Biggl(\sum_{j=1}^{ \lfloor{nt/2} \rfloor}
w_{2j}^2G_0(2j)\delta(\partial_{({2j-1})/{n}}
) \Biggr)^2 \Biggr]
\\[-2pt]
&&\qquad = {\mathbb E} \Biggl[ \sum_{j,k=1}^{ \lfloor{nt/2} \rfloor}
w_{2j}^2w_{2k}^2
G_0(2j)G_0(2k) \\[-2pt]
&&\qquad\quad\hspace*{35pt}{}\times\bigl\{ \delta^2 (
\partial_{({2j-1})/{n}} \otimes\partial_{(2k-1)/{n}} ) + \langle
\partial_{(2j-1)/{n}},\partial_{(2k-1)/{n}} \rangle_\hten\bigr\}
\Biggr]
\\[-2pt]
&&\qquad \le Cn^{-1} \sum_{j,k=1}^{ \lfloor{nt/2} \rfloor} {
\mathbb E} \Bigl[ {\mathbb E}\sup_{0\le\ell\le\lfloor{nt}/{2}\rfloor
} \bigl\llvert G_0(
\ell)\bigr\rrvert^2 \bigl\llvert\langle\partial_{({2j-1})/{n}},
\partial_{(2k-1)/{n}} \rangle_\hten\bigr\rrvert\Bigr]
\\[-2pt]
&&\qquad\quad{} + Cn^{-1}\sum_{j,k=1}^{ \lfloor{nt/2} \rfloor} {
\mathbb E} \biggl[\sum_{a+b=2}{\mathbb E}\bigl\llvert
\bigl\langle D^aG_0(2j)D^bG_0(2k),\\[-2pt]
&&\qquad\quad\hspace*{119.6pt}
\delta^2 (\partial_{({2j-1})/{n}} \otimes\partial_{(2k-1)/{n}} )
\bigr\rangle_{\hten^{\otimes2}}\bigr\rrvert\biggr].
\end{eqnarray*}
By Lemma~\ref{le4.1} we have an estimate for the second term,
\[
Cn^{-1} \sum_{j,k=1}^{ \lfloor{nt/2} \rfloor} \bigl
\llvert\langle\partial_{(2j-1)/{n}},\partial_{(2k-1)/{n}}
\rangle_\hten\bigr\rrvert\le C{ \biggl\lfloor\frac{nt}{2} \biggr
\rfloor}n^{-{3/2}}.\vadjust{\goodbreak}
\]
Then the first term has the same estimate as (\ref{D6}) when $r=2$,
which proves (\ref{single2j}) and the lemma.\vspace*{-2pt}

%s6.2 #&#
\subsection{\texorpdfstring{Proof of Lemma \protect\ref{le4.5}}{Proof of Lemma 4.5}}\label{sec6.2}
As in Lemma~\ref{le4.4}, we may assume $r=0$. Start with $B_n(t)$. Define
\begin{eqnarray*}
\gamma_n(t):\!&=& \sum_{j=1}^{\lfloor{nt}/{2} \rfloor}
f^{(3)}(W_{({2j-1})/{n}}) \bigl\langle\varepsilon_{({2j-1})/{n}},
\partial_{({2j-1})/{n}}^{\otimes2}-\partial_{(2j-2)/{n}}^{\otimes2}
\bigr\rangle_\hten
\\
&=& \sum_{j=1}^{\lfloor{nt}/{2} \rfloor} f^{(3)}(W_{({2j-1})/{n}})
\langle\varepsilon_{({2j-1})/{n}}, \partial_{({2j-1})/{n}} -
\partial_{({2j-2})/{n}} \rangle_\hten\\
&&\hspace*{21.3pt}{}\times( \partial_{({2j-1})/{n}} -
\partial_{({2j-2})/{n}} ),
\end{eqnarray*}
so that $B_n(t) = 2\delta(\gamma_n(t))$. By Lemma~\ref{le2.1}, we have
\[
\bigl\|\delta\bigl(\gamma_n(t)\bigr)
\bigr\|_{L^2(\Omega)}^2 \le{\mathbb E}\bigl\|\gamma_n(t)\bigr\|
^2_{\hten} + {\mathbb E}\bigl\|D\gamma_n(t)\bigr\|_{\hten^{\otimes2}}^2.
\]
We can write
\begin{eqnarray*}
\bigl\| \gamma_n(t) \bigr\|_\hten^2 &=& \sum
_{j,k=1}^{\lfloor
{nt}/{2} \rfloor} f^{(3)}(W_{({2j-1})/{n}})f^{(3)}(W_{(2k-1)/{n}})\\
&&\hspace*{22pt}{}\times
\langle\varepsilon_{({2j-1})/{n}}, \partial_{(2j-1)/{n}} -
\partial_{({2j-2})/{n}} \rangle_\hten
\\
&&\hspace*{22pt}{}\times\langle\varepsilon_{(2k-1)/{n}}, \partial_{(2k-1)/{n}} -
\partial_{(2k-2)/{n}} \rangle_\hten\\
&&\hspace*{22pt}{}\times\langle\partial_{({2j-1})/{n}}
- \partial_{({2j-2})/{n}}, \partial_{(2k-1)/{n}} - \partial_{(2k-2)/{n}}
\rangle_\hten
\\
&\le&\sup_{0\le s\le t} \bigl\llvert f^{(3)}(W_s)\bigr
\rrvert^2 \sup_{1\le
j\le
\lfloor nt \rfloor} \langle\varepsilon_{({2j-1})/{n}},
\partial_{({2j-1})/{n}} - \partial_{({2j-2})/{n}} \rangle_\hten^2\\
&&{}\times
\sum_{j,k=1}^{ \lfloor{nt/2} \rfloor} \bigl\llvert\langle
\partial_{({2j-1})/{n}} -\partial_{({2j-2})/{n}}, \partial_{(2k-1)/{n}}-
\partial_{(2k-2)/{n}} \rangle_\hten\bigr\rrvert.
\end{eqnarray*}
Note that ${\mathbb E} [ \sup_{0\le s\le t}|f^{(3)}(W_s)|^2 ]
= C$ by condition (0), and by Lemma~\ref{le6.2}, $\llvert\langle
\varepsilon_t,
\partial_{({2j-1})/{n}} - \partial_{({2j-2})/{n}} \rangle_\hten
\rrvert\le C_2 n^{-{1/2}}$ for all $j, t$. By Corollary~\ref{co4.2} we know
\begin{eqnarray*}
&&\sum_{j,k=1}^{ \lfloor{nt/2} \rfloor} \bigl\llvert\langle
\partial_{({2j-1})/{n}} -\partial_{({2j-2})/{n}}, \partial_{(2k-1)/{n}}-
\partial_{(2k-2)/{n}} \rangle_\hten\bigr\rrvert\le C \biggl\lfloor
\frac{nt}{2} \biggr\rfloor n^{-{1/2}}.
\end{eqnarray*}
Hence, it follows that
${\mathbb E}\| \gamma_n(t) \|_\hten^2 \le C \lfloor\frac
{nt}{2} \rfloor n^{-1} n^{-{1/2}}
\le C \lfloor\frac{nt}{2} \rfloor n^{-{3/2}}$. Next,
\begin{eqnarray*}
D\gamma_n(t) &=& \sum_{j=1}^{\lfloor{nt}/{2} \rfloor}
f^{(4)}(W_{({2j-1})/{n}}) \langle\varepsilon_{({2j-1})/{n}},
\partial_{({2j-1})/{n}} - \partial_{({2j-2})/{n}} \rangle_\hten\\
&&\hspace*{21pt}{}\times
\bigl(
\varepsilon_{({2j-1})/{n}} \otimes(\partial_{(2j-1)/{n}} -
\partial_{({2j-2})/{n}} ) \bigr),
\end{eqnarray*}
and this implies
\begin{eqnarray*}
\bigl\| D\gamma_n(t) \bigr\|_{\hten^{\otimes2}}^2
&\le&
\sup_{0\le
s\le t}\bigl\llvert f^{(4)}(W_s)\bigr
\rrvert^2 \\
&&{}\times\Biggl\llvert\sum_{j,k=1}^{ \lfloor{nt/2} \rfloor}
\langle\varepsilon_{({2j-1})/{n}}, \partial_{({2j-1})/{n}} -
\partial_{({2j-2})/{n}} \rangle_\hten\\
&&\hspace*{36pt}{}\times\langle\varepsilon_{(2k-1)/{n}}, \partial_{(2k-1)/{n}} - \partial_{(2k-2)/{n}}
\rangle_\hten\Biggr\rrvert
\\
&&{} \times\bigl\llvert\bigl\langle\varepsilon_{({2j-1})/{n}} \otimes(
\partial_{({2j-1})/{n}} - \partial_{({2j-2})/{n}} ),\\
&&\hspace*{-31.4pt}\hspace*{55.5pt}\varepsilon
_{(2k-1)/{n}}
\otimes(\partial_{(2k-1)/{n}} - \partial_{(2k-2)/{n}} ) \bigr
\rangle_{\hten^{\otimes2}}\bigr\rrvert
\\
&\le&\sup_{0\le s\le t}\bigl\llvert f^{(4)}(W_t)\bigr\rrvert^2
\Bigl(\sup_j \langle\varepsilon_{({2j-1})/{n}},
\partial_{({2j-1})/{n}} - \partial_{({2j-2})/{n}} \rangle_\hten^2
\Bigr)
\\
&&{} \times\sup_{0\le s,r \le t}\bigl\llvert\langle\varepsilon_s,
\varepsilon_r \rangle_\hten\bigr\rrvert\\
&&
\hspace*{0pt}{}\times\sum
_{j,k=1}^{ \lfloor{nt/2} \rfloor} \bigl\llvert\langle
\partial_{({2j-1})/{n}} -\partial_{({2j-2})/{n}}, \partial_{(2k-1)/{n}}-
\partial_{(2k-2)/{n}} \rangle_\hten\bigr\rrvert.
\end{eqnarray*}
By condition (0), ${\mathbb E} [\sup_{0\le s \le t}
|f^{(4)}(W_s)| ]$ is bounded, and $\sup_{0\le s,r\le t}|
\langle
\varepsilon_r, \varepsilon_s \rangle_\hten|$ is bounded. Hence,
it can
be seen that ${\mathbb E}\| D\gamma_n(t) \|_{\hten^{\otimes2}}^2$
gives the same estimate as $\gamma_n(t)$.\

For $C_n(t)$, using condition (0) and the identity $a^2 - b^2 = (a-b)
(a+b)$, we can write
\begin{eqnarray*}
{\mathbb E} \bigl[C_n(t)^2 \bigr] &\le&{\mathbb E} \Bigl[
\sup_{0\le s
\le
t} \bigl|f^{(4)}(W_s)\bigr|^2 \Bigr]\\
&&
\hspace*{0pt}{}\times
\Biggl(\sup_{1\le j\le{nt}/{2}}\bigl\llvert\langle\varepsilon
_{({2j-1})/{n}},
\partial_{({2j-1})/{n}} - \partial_{({2j-2})/{n}} \rangle_\hten\bigr
\rrvert\\
&&
\hspace*{54.6pt}{}\times
\sum_{j=1}^{ \lfloor{nt/2} \rfloor} \bigl\llvert
\langle\varepsilon_{({2j-1})/{n}}, \partial_{({2j-1})/{n}} +
\partial_{({2j-2})/{n}} \rangle_\hten\bigr\rrvert\Biggr)^2.
\end{eqnarray*}
By Lemma~\ref{le6.2}, $\llvert\langle\varepsilon_{({2j-1})/{n}},
\partial_{({2j-1})/{n}} - \partial_{({2j-2})/{n}} \rangle_\hten
\rrvert\le
C_2 n^{-{1/2}}$ and by condition~(iv),
\begin{eqnarray*}
\sum_{j=1}^{ \lfloor{nt/2} \rfloor} \bigl\llvert\langle
\varepsilon_{({2j-1})/{n}}, {\mathbf1}_{[({2j-2})/{n},
{2j}/{n})} \rangle_\hten
\bigr\rrvert&\le& Cn^{-{1/2}}+Cn^{-{1/2}}\sum
_{j=2}^{ \lfloor{nt/2} \rfloor} (2j-2)^{-{1/2}} \\
&\le& C \biggl\lfloor
\frac{nt}{2} \biggr\rfloor^{{1/2}} n^{-{1/2}}.
\end{eqnarray*}
Hence it follows that ${\mathbb E} [C_n(t)^2 ] \le C
\lfloor\frac{nt}{2} \rfloor
n^{-2}$ for some constant C, and the lemma is proved.

%s6.3 #&#
\subsection{\texorpdfstring{Proof of Lemma \protect\ref{le4.9}}{Proof of Lemma 4.9}}\label{sec6.3}
For $i = 1,\ldots, d$, set
\[
u_n^i = \sum_{j=\lfloor{nt_{i-1}}/{2}\rfloor+1}^{\lfloor
{nt_i}/{2}\rfloor}
f''(W_{({2j-1})/{n}}) \bigl( \partial_{(2j-1)/{n}}^{\otimes2}
- \partial_{({2j-2})/{n}}^{\otimes2} \bigr)
\]
and recall that $F_n^i = \delta^2(u_n^i)$.
As in Remark~\ref{re3.3}, we want to show:\vspace*{8pt}

Condition (a). For each $1 \le i \le d$, the following
converge to zero in $L^1(\Omega)$:

(a.1) $ \langle u_n^i, h_1 \otimes h_2 \rangle_{\hten
^{\otimes
2}} \mbox{ for all } h_1, h_2 \in\hten$ of the form
$\varepsilon_\tau$ (see Remark~\ref{re3.4}).

(a.2) $ \langle u_n^i, DF_n^j \otimes h \rangle_{\hten
^{\otimes
2}} \mbox{ for each } 1 \le j \le d \mbox{ and } h \in
\hten$.

(a.3) $ \langle u_n^i, DF_n^j \otimes DF_n^k \rangle_{\hten
^{\otimes2}} \mbox{ for each } 1 \le j, k \le d$.\vspace*{8pt}

Condition (b).

(b.1) $ \langle u_n^i, D^2F_n^j \rangle_{\hten
^{\otimes2}}
\longrightarrow0$ in $L^1$ if $i \neq j$.

(b.2) $ \langle u_n^i, D^2F_n^i \rangle_{\hten
^{\otimes2}}$
converges in $L^1$ to a random variable of the form
\[
F_\infty^j = c\int_{t_{i-1}}^{t_i}
f''(W_s)^2 \eta(ds).
\]

The proofs of (a.1) and (a.2) are essentially the same as those given
in~\cite{NoNu} (see Theorem 5.2), but the proof of (a.3) is new.

\textit{Proof of} (a.1).
We may assume $i=1$. Let $h_1 \otimes h_2 = \varepsilon_s \otimes
\varepsilon_\tau\in\hten^{\otimes2}$ for some values $s, \tau\in
[0,t]$. Then
\begin{eqnarray*}
\bigl\langle u_n^1, h_1 \otimes
h_2 \bigr\rangle_{\hten^{\otimes2}} &=& \sum_{j=1}^{\lfloor{nt_1}/{2}
\rfloor}
f''(W_{({2j-1})/{n}}) \langle\partial_{({2j-1})/{n}}
- \partial_{({2j-2})/{n}}, \varepsilon_s \rangle_\hten\\
&&\hspace*{24.5pt}{}\times
\langle\partial_{({2j-1})/{n}} - \partial_{(2j-2)/{n}}, \varepsilon_\tau
\rangle_\hten;
\end{eqnarray*}
so that
\begin{eqnarray*}
&&
\bigl\llvert\bigl\langle u_n^1, h_1
\otimes h_2 \bigr\rangle_{\hten^{\otimes
2}}\bigr\rrvert\\
&&\qquad\le
\sup_{0\le s \le t}\bigl|f''(W_s)\bigr|
\sup_{1\le j\le\lfloor{nt_1}/{2}
\rfloor} \sup_{0 \le s \le t_1} \bigl\llvert\langle
\partial_{({2j-1})/{n}} - \partial_{({2j-2})/{n}}, \varepsilon_s
\rangle_\hten\bigr\rrvert\\
&&\qquad\quad\hspace*{113pt}{}\times\sum_{j=1}^{\lfloor{nt_1}/{2} \rfloor}
\bigl\llvert\langle\partial_{(2j-1)/{n}} - \partial_{({2j-2})/{n}},
\varepsilon_\tau\rangle_\hten\bigr\rrvert.
\end{eqnarray*}
It follows from condition (iii) that for fixed $\tau\ge0$,
%
%e50 #&#
\begin{eqnarray}\label{deltaeps}
&&
\sum_{j=1}^{\lfloor{nt_1}/{2} \rfloor} \bigl\llvert\langle
\partial_{({2j-1})/{n}} - \partial_{({2j-2})/{n}}, \varepsilon_\tau
\rangle_\hten\bigr\rrvert\nonumber\\
&&\qquad = \sum_{j=1}^{\lfloor{nt_1}/{2} \rfloor}
\bigl\llvert{\mathbb E} \bigl[ W_\tau(W_{{2j}/{n}} -
2W_{({2j-1})/{n}}+W_{(2j-2)/{n}}) \bigr]\bigr\rrvert
\nonumber\\[-8pt]\\[-8pt]
&&\qquad \le Cn^{-{1/2}} + Cn^{-{1/2}}\sum_{j=2}^{\lfloor
{nt_1}/{2}\rfloor}
\bigl((2j-2)^{-{3/2}} + |\tau-2j|^{-{3/2}}\wedge1 \bigr)
\nonumber\\
&&\qquad\le Cn^{-{1/2}},\nonumber
\end{eqnarray}
and Lemma~\ref{le6.2} implies
\[
\sup_{1\le j\le\lfloor{nt_1}/{2} \rfloor} \sup_{0 \le s \le t} \bigl
\llvert\langle
\partial_{({2j-1})/{n}} - \partial_{(2j-2)/{n}}, \varepsilon_s
\rangle_\hten\bigr\rrvert\le Cn^{-{1/2}}
\]
so that
\[
{\mathbb E} \bigl( \bigl\llvert\bigl\langle u_n^1,
h_1 \otimes h_2 \bigr\rangle_{\hten
^{\otimes2}}\bigr\rrvert
\bigr) \le Ct_1n^{-1} \longrightarrow0.
\]

\textit{Proof of} (a.2). %%%%%%%%%%%%
As in (a.1), assume $i=1$. Using the same technique as in (a.1), we can
write $DF_n^j \otimes h$ as $DF_n^j \otimes\varepsilon_\tau$ for some
$\tau\in[0,T]$. By Lemma~\ref{le2.1}, $DF^j_n = D\delta^2(u_n^j) = \delta
^2(Du_n^j)+\delta(u_n^j)$, which gives
\[
\bigl\langle u_n^1, DF_n^j
\otimes\varepsilon_\tau\bigr\rangle_{\hten
^{\otimes
2}} = \bigl\langle
u_n^1,\delta^2\bigl(Du_n^j
\bigr) \otimes\varepsilon_\tau\bigr\rangle_{\hten^{\otimes2}} + \bigl
\langle u_n^1, \delta\bigl(u_n^j
\bigr) \otimes\varepsilon_\tau\bigr\rangle_{\hten^{\otimes2}}.
\]
For the first term,
\begin{eqnarray*}
&&
{\mathbb E} \bigl\llvert\bigl\langle u_n^1,
\delta^2\bigl(Du_n^j\bigr) \otimes
\varepsilon_\tau\bigr\rangle_{\hten^{\otimes2}} \bigr\rrvert
\\
&&\qquad = \sum_{\ell=1}^{\lfloor{nt_1}/{2}\rfloor} {\mathbb E}\bigl
\llvert f''(W_{(2\ell-1)/{n}}) \bigl\langle
\partial_{(2\ell-1)/{n}} - \partial_{(2 \ell-2)/{n}}, \delta^2
\bigl(Du_n^j\bigr) \bigr\rangle_\hten\\
&&\qquad\quad\hspace*{114.5pt}{}\times\langle
\partial_{(2\ell-1)/{n}} - \partial_{(2 \ell-2)/{n}},
\varepsilon_\tau
\rangle_\hten\bigr\rrvert
\\
&&\qquad \le2{\mathbb E} \Bigl[ \sup_{0\le s \le t_1}\bigl|f''(W_s)\bigr|
\Bigr] {\mathbb E} \Bigl[ \sup_{1\le\ell\le\lfloor{
{nt_1}/{2}}\rfloor} \bigl\llvert\bigl\langle
\partial_{{\ell}/{n}}, \delta^2 \bigl(Du_n^j
\bigr) \bigr\rangle_\hten\bigr\rrvert\Bigr] \\
&&\qquad\quad\hspace*{0pt}{}\times\sum
_{\ell= 1}^{\lfloor{nt_1}/{2}\rfloor} \bigl\llvert\langle
\partial_{(2\ell-1)/{n}} -\partial_{(2\ell-2)/{n}}, \varepsilon
_\tau
\rangle_\hten\bigr\rrvert.
\end{eqnarray*}
By (\ref{deltaeps}), the sum has estimate $Cn^{-{1/2}}$, and for
the second term we can take
\[
\bigl\llvert\bigl\langle\partial_{{\ell}/{n}}, \delta^2
\bigl(Du_n^j\bigr) \bigr\rangle_\hten\bigr
\rrvert\le\sup_\ell\|\partial_{{\ell}/{n}} \|_\hten\bigl\|
\delta^2 \bigl(Du_n^j\bigr)
\bigr\|_\hten.
\]
It follows from condition (i) that $\|\partial_{{\ell}/{n}} \|_\hten
\le Cn^{-{1/4}}$. This leaves the $\| \delta^2 (Du_n^j)
\|_\hten$ term. By the Meyer inequality for a process taking values in
$\hten$,
%
%e51 #&#
\begin{equation}
\label{Meyer49} \hspace*{15pt}{\mathbb E} \bigl[\bigl\| \delta^2
\bigl(Du_n^j\bigr) \bigr\|_\hten^2
\bigr] \le c_1{\mathbb E}\bigl\|Du_n^j
\bigr\|_{\hten^{\otimes3}}^2 + c_2{\mathbb E} \bigl\|
D^2u_n^j\bigr\|_{\hten^{\otimes4}}^2 +
c_3{\mathbb E}\bigl\|D^3 u_n^j
\bigr\|_{\hten
^{\otimes5}}^2,
\end{equation}
so that by Lemma~\ref{le4.7}, ${\mathbb E} [\|\delta^2 (Du) \|_\hten^2
] \le C$, and we have
\[
{\mathbb E} \bigl\llvert\bigl\langle u_n^1,
\delta^2\bigl(Du_n^j\bigr) \otimes
\varepsilon_\tau\bigr\rangle_{\hten^{\otimes2}}\bigr\rrvert\le
Cn^{-{3/4}}.
\]
Then similarly,
\begin{eqnarray*}
&&
\bigl\llvert\bigl\langle u_n^1,\delta
\bigl(u_n^j\bigr) \otimes\varepsilon_t \bigr
\rangle_{\hten
^{\otimes2}}\bigr\rrvert\\
&&\qquad\le2 \biggl[\sup_{0\le s \le t_1}\bigl|f''(W_s)\bigr|
\sup_\ell\bigl\llvert\bigl\langle\partial_{{\ell}/{n}}, \delta
\bigl(u_n^j\bigr) \bigr\rangle_\hten\bigr
\rrvert\sum_{\ell}\bigl\llvert\langle
\partial_{(2\ell
-1)/{n}} -\partial_{(2\ell-2)/{n}}, \varepsilon_t
\rangle_\hten\bigr\rrvert\biggr].
\end{eqnarray*}
Similar to the above case, for each $1 \le\ell\le\lfloor
\frac
{nt_1}{2} \rfloor$,
\begin{eqnarray*}
{\mathbb E} \bigl[ \bigl\llvert\bigl\langle\partial_{
{\ell}/{n}}, \delta
\bigl(u_n^j\bigr) \bigr\rangle_\hten\bigr
\rrvert\bigr] &\le &{\mathbb E} \bigl[\| \partial_{{\ell}/{n}}\|
_\hten
\bigl\| \delta\bigl(u_n^j\bigr) \bigr\|_\hten\bigr]
\\
&\le& Cn^{-{1/4}} \bigl( {\mathbb E}\bigl\| u_n^j
\bigr\|_{\hten^{\otimes2}} + {\mathbb E} \bigl\| Du_n^j
\bigr\|_{\hten^{\otimes3}} \bigr) \\
&\le& Cn^{-{1/4}},
\end{eqnarray*}
and hence with (\ref{deltaeps}) we have
\[
{\mathbb E} \bigl[ \bigl\llvert\bigl\langle u_n^1,
\delta\bigl(u_n^j\bigr) \otimes\varepsilon_\tau
\bigr\rangle_{\hten^{\otimes2}} \bigr\rrvert\bigr] \le Cn^{-{3/4}}.
\]

\textit{Proof of} (a.3). %%%%%%%%%%%%%%
For this term we consider the product $ \langle u_n^i, DF_n^j
\otimes
DF_n^k \rangle_{\hten^{\otimes2}}$. Lemma~\ref{le6.1} shows that scalar
products of this kind are small in absolute value when the time
intervals are disjoint, and therefore it is enough to consider the
worst case $ \langle u_n^1, DF_n^1 \otimes DF_n^1 \rangle_{\hten
^{\otimes
2}}$, and assume $t_1 = t$.
We have
\begin{eqnarray*}
&&
{\mathbb E} \bigl[\bigl\llvert\bigl\langle u_n^1,
DF_n^1 \otimes DF_n^1 \bigr
\rangle_{\hten
^{\otimes2}}\bigr\rrvert\bigr] \\
&&\qquad\le \sum_{\ell=1}^{ \lfloor{nt/2} \rfloor}
\bigl\llvert{\mathbb E} \bigl[ \bigl\langle f''(W_{(2\ell-1)/{n}})
\bigl(\partial_{(2\ell-1)/{n}}^{\otimes
2}-\partial_{(2\ell-2)/{n}}^{\otimes2}
\bigr),DF_n^1 \otimes DF_n^1
\bigr\rangle_{\hten^{\otimes2}} \bigr]\bigr\rrvert
\\
&&\qquad\le C\sum_{\ell=1}^{ \lfloor{nt/2} \rfloor} {\mathbb E} \bigl[
\bigl\llvert\bigl\langle\partial_{(2\ell-1)/{n}}, DF_n^1
\bigr\rangle_\hten^2 - \bigl\langle\partial_{(2\ell-2)/{n}},
DF_n^1 \bigr\rangle_\hten^2\bigr
\rrvert\bigr]
\\
&&\qquad\le C\sum_{\ell=1}^{ \lfloor{nt/2} \rfloor} {\mathbb E} \bigl[
\bigl\llvert\bigl\langle\partial_{(2\ell-1)/{n}}-\partial_{(2\ell-2)/{n}},
DF_n^1 \bigr\rangle_\hten\bigr\rrvert\bigl
\llvert\bigl\langle{\mathbf1}_{[({2\ell-2})/{n}, {2\ell
}/{n}]},DF_n^1
\bigr\rangle_\hten\bigr\rrvert\bigr].
\end{eqnarray*}
Using the decomposition $DF_n^1 = \delta^2(Du_n^1) + \delta(u_n^1)$,
the above summand expands into four terms:
\begin{eqnarray*}
&&\mbox{(1)\quad} \bigl\llvert\bigl\langle\partial_{(2\ell-1)/{n}} -
\partial_{(2\ell-2)/{n}}, \delta^2\bigl(Du_n^1
\bigr) \bigr\rangle_\hten\bigr\rrvert\bigl\llvert\bigl\langle{
\mathbf1}_{[({2\ell-2})/{n},{2\ell}/{n}]}, \delta^2\bigl(Du_n^1
\bigr) \bigr\rangle_\hten\bigr\rrvert;
\\
&&\mbox{(2)\quad}\bigl\llvert\bigl\langle\partial_{(2\ell-1)/{n}} -
\partial_{(2\ell-2)/{n}}, \delta^2\bigl(Du_n^1\bigr) \bigr
\rangle_\hten\bigr\rrvert\bigl\llvert\bigl\langle{
\mathbf1}_{[({2\ell-2})/{n},{2\ell}/{n}]}, \delta\bigl(u_n^1\bigr
) \bigr\rangle_\hten\bigr\rrvert;
\\
&&\mbox{(3)\quad}\bigl\llvert\bigl\langle\partial_{(2\ell-1)/{n}} -
\partial_{(2\ell-2)/{n}}, \delta\bigl(u_n^1\bigr)
\bigr\rangle_\hten\bigr\rrvert
\bigl\llvert\bigl\langle{\mathbf1}_{[({2\ell
-2})/{n},{2\ell}/{n}]}, \delta^2\bigl(Du_n^1\bigr) \bigr
\rangle_\hten\bigr\rrvert;
\\
&&\mbox{(4)\quad}\bigl\llvert\bigl\langle\partial_{(2\ell-1)/{n}} -
\partial_{(2\ell-2)/{n}}, \delta\bigl(u_n^1\bigr)
\bigr\rangle_\hten\bigr\rrvert
\bigl\llvert\bigl\langle{\mathbf1}_{[({2\ell
-2})/{n},{2\ell}/{n}]}, \delta\bigl(u_n^1\bigr)
\bigr\rangle_\hten\bigr\rrvert.
\end{eqnarray*}
We will show computations for the terms (1) and (4) only, with the
others being similar. For (1) we have
\begin{eqnarray*}
&&
C\sum_{\ell=1}^{ \lfloor{nt/2} \rfloor} {\mathbb E} \bigl[\bigl
\llvert\bigl\langle\partial_{(2\ell-1)/{n}} - \partial_{(2\ell-2)/{n}},
\delta^2\bigl(Du_n^1\bigr) \bigr
\rangle_\hten\bigr\rrvert\bigl\llvert\bigl\langle{
\mathbf1}_{[({2\ell-2})/{n},{2\ell}/{n}]}, \delta^2\bigl(Du_n^1
\bigr) \bigr\rangle_\hten\bigr\rrvert\bigr]
\\
&&\qquad = C\sum_{\ell,m,m' =1}^{ \lfloor{nt/2} \rfloor} {\mathbb E}\bigl
\llvert\bigl\langle\partial_{(2\ell-1)/{n}} -
\partial_{(2\ell-2)/{n}},\\
&&\qquad\hspace*{72pt}
\delta^2 \bigl(f^{(3)}(W_{(2m-1)/{n}})
\varepsilon_{(2m-1)/{n}} \bigl(\partial_{(2m-1)/{n}}^{\otimes2} -
\partial_{(2m -2)/{n}}^{\otimes2} \bigr) \bigr) \bigr\rangle_\hten
\bigr\rrvert
\\
&&\hspace*{43.4pt}\qquad\quad{} \times\bigl\llvert\bigl\langle{\mathbf1}_{[({2\ell
-2})/{n},
{2\ell}/{n}]},\\
&&\qquad\quad\hspace*{63.2pt}
\delta^2 \bigl(f^{(3)}(W_{({2m'-1})/{n}})
\varepsilon_{({2m'-1})/{n}} \bigl(\partial_{({2m'-1})/{n}}^{\otimes
2} -
\partial_{({2m' -2})/{n}}^{\otimes2} \bigr) \bigr) \bigr\rangle_\hten
\bigr\rrvert
\\
&&\qquad \le C\sup_{1\le k\le\lfloor{nt}/{2} \rfloor
} \bigl( {\mathbb E} \bigl[ \bigl\llVert
\delta^2 \bigl( f^{(3)}(W_{(2k-1)/{n}}) \bigl(
\partial_{(2k-1)/{n}}^{\otimes2} - \partial_{(2k -2)/{n}}^{\otimes2}
\bigr) \bigr)\bigr\rrVert_{\hten^{\otimes2}} \bigr] \bigr)^2
\\
&&\qquad\quad{} \times\sum_{\ell,m,m' =1}^{ \lfloor{nt/2} \rfloor}
\bigl\llvert\langle\partial_{(2\ell-1)/{n}} -
\partial_{(2\ell-2)/{n}},\varepsilon_{(2m-1)/{n}}
\rangle_\hten\bigr\rrvert\\
&&\qquad\quad\hspace*{46.4pt}{}\times\bigl\llvert\langle{\mathbf1}_{[({2\ell
-2})/{n},{2\ell}/{n}]},\varepsilon_{({2m'-1})/{n}} \rangle_\hten
\bigr\rrvert.
\end{eqnarray*}
By Lemmas~\ref{le2.1} and~\ref{le4.7}, the Skorohod integral term is bounded by
$Cn^{-{1/2}}$, and we use conditions (iii) and (iv) for the
scalar products to obtain an estimate of the form
\begin{eqnarray*}
&&
Cn^{-2}\sum_{\ell, m, m'=1}^{ \lfloor{nt/2} \rfloor} \bigl(
(2m-1)^{-{3/2}} + |2\ell-2m|^{-{3/2}} \bigr)\\
&&\hspace*{60pt}{}\times \bigl((2
\ell-2)^{-{1/2}} + \bigl|2\ell- 2m'\bigr|^{-{1/2}} \bigr)\\
&&\qquad\le
Cn^{-{1/2}}.
\end{eqnarray*}
For term (4), we have by a computation similar to the proof of Lemma~\ref{le4.7},
\[
{\mathbb E} \bigl[ \bigl\llVert\delta\bigl( f^{(3)}(W_{(2k-1)/{n}})
(\partial_{(2k-1)/{n}} - \partial_{(2k -2)/{n}} ) \bigr)\bigr
\rrVert_{\hten} \bigr] \le Cn^{-{1/4}},
\]
and by conditions (i) and (ii) we have
\begin{eqnarray*}
&&
Cn^{-{3/2}}\sum_{\ell, m, m'=1}^{ \lfloor{nt/2} \rfloor} \bigl
\llvert\langle\partial_{(2\ell-1)/{n}} - \partial_{(2\ell-2)/{n}},
\partial_{(2m-1)/{n}} - \partial_{(2m -2)/{n}} \rangle_\hten\bigr
\rrvert\\
&&\qquad\quad\hspace*{36pt}{}\times\bigl\llvert\langle{\mathbf1}_{[({2\ell-2})/{n},{2\ell
}/{n}]},\partial_{({2m'-1})/{n}} - \partial_{({2m'-2})/{n}}
\rangle_\hten\bigr\rrvert
\\
&&\qquad \le Cn^{-{3/2}}\sum_{\ell, m, m'=1}^{ \lfloor{nt/2} \rfloor}
\bigl(|2\ell- 2m|^{-\alpha} \bigr) \bigl(\bigl|2\ell- 2m'\bigr|^{-\alpha}
\bigr)
\\
&&\qquad \le Cn^{-{1/2}}.
\end{eqnarray*}

\textit{Proof of} (b.1). By Lemma~\ref{le2.1}, we can expand $D^2F_n$ as follows:
%
%e52 #&#
\begin{eqnarray}
\label{D2Fi} \bigl\langle u_n^i, D^2
F_n^j \bigr\rangle_{\hten
^{\otimes2}} &=& \bigl\langle
u_n^i, \delta^2 \bigl(D^2
u_n^j\bigr) \bigr\rangle_{\hten^{\otimes2}} + 4 \bigl
\langle u_n^i, \delta\bigl(Du_n^j
\bigr) \bigr\rangle_{\hten^{\otimes2}}\nonumber\\[-8pt]\\[-8pt]
&&{} + 2 \bigl\langle u_n^i,u_n^j
\bigr\rangle_{\hten^{\otimes2}}.\nonumber
\end{eqnarray}

Without loss of generality, we may assume that $u_n^i$ is defined on
$[0, t_1]$, and $F_n^j$ is defined on $[t_1, t_2]$ for $t_1 < t_2$, so
that the sums are over
\[
u_n^i = \sum_{\ell= 1}^{\lfloor{nt_1}/{2} \rfloor}
f''(W_{(2\ell-1)/{n}}) \bigl(\partial_{(2\ell-1)/{n}}^{\otimes2}
-\partial_{(2\ell-2)/{n}}^{\otimes2} \bigr)
\]
and
\[
u_n^j = \sum_{m = \lfloor{nt_1}/{2} \rfloor+1}^{\lfloor
{nt_2}/{2} \rfloor}
f''(W_{(2m-1)/{n}}) \bigl(\partial_{(2m-1)/{n}}^{\otimes2}
-\partial_{(2m-2)/{n}}^{\otimes2} \bigr).
\]

\textit{First term.}
\begin{eqnarray*}
&&
{\mathbb E}\bigl\llvert\bigl\langle u_n^i,
\delta^2 \bigl(D^2 u_n^j\bigr)
\bigr\rangle_{\hten^{\otimes2}}\bigr\rrvert
\\
&&\qquad = {\mathbb E} \Biggl\llvert\Biggl\langle\sum_{\ell=1}^{\lfloor{nt_1}/{2}
\rfloor}
f'' (W_{(2\ell-1)/{n}}) \bigl(\partial_{(2\ell
-1)/{n}}^{\otimes2}
- \partial_{(2\ell-2)/{n}}^{\otimes2} \bigr),\\
&&\qquad\hspace*{29pt} \delta^2 \Biggl(
\sum_{m=\lfloor{nt_1}/{2} \rfloor+1}^{\lfloor
{nt_2}/{2} \rfloor} f^{(4)}
(W_{(2m-1)/{n}}) \varepsilon_{(2m-1)/{n}}^{\otimes2} \\
&&\qquad\quad\hspace*{85pt}{}\otimes\bigl(
\partial_{(2m-1)/{n}}^{\otimes
2} - \partial_{(2m-2)/{n}}^{\otimes2}
\bigr) \Biggr) \Biggr\rangle_{\hten
^{\otimes2}}\Biggr\rrvert
\\
&&\qquad \le{\mathbb E} \Bigl[\sup_{0\le s \le t}\bigl|f''(W_s)\bigr| \Bigr] \\
&&\qquad\quad{}\times{\mathbb E}
\biggl[\sum_\ell\sum_m \bigl\llvert\bigl\langle\varepsilon_{(2m-1)/{n}}^{\otimes2}, \partial_{(2\ell-1)/{n}}^{\otimes2} -
\partial_{(2\ell-2)/{n}}^{\otimes2}
\bigr\rangle_{\hten^{\otimes2}} \bigr\rrvert\\
&&\qquad\quad\hspace*{54pt}{}\times\bigl\llvert\delta^2 \bigl(
f^{(4)}(W_{(2m-1)/{n}}) \bigl(\partial_{(2m-1)/{n}}^{\otimes2}
- \partial_{(2m-2)/{n}}^{\otimes2} \bigr) \bigr)\bigr\rrvert\biggr]
\\
&&\qquad \le{\mathbb E} \Bigl[\sup_{0\le s \le t}\bigl|f''(W_s)\bigr|
\Bigr] \\
&&\qquad\quad\hspace*{0pt}{}\times\sup_m \bigl\|\delta^2 (g_4)
\bigr\|_{L^2(\Omega)} \sum_{\ell=1}^{\lfloor{nt_2}/{2}
\rfloor} \sum
_{m=1}^{\lfloor{nt_2}/{2} \rfloor} \bigl[ \langle
\varepsilon_{(2m-1)/{n}}, \partial_{(2\ell-1)/{n}} \rangle
_\hten^2\\[-2pt]
&&\qquad\hspace*{160pt}{}
- \langle\varepsilon_{(2m-1)/{n}}, \partial_{(2\ell
-2)/{n}}
\rangle_\hten^2 \bigr].
\end{eqnarray*}

First we need an estimate for the $\delta^2 (g_4)$ term, where in the
notation of Lem\-ma~\ref{le4.7},
\[
g_4:= f^{(4)}(W_{(2m-1)/{n}}) \bigl(
\partial_{(2m-1)/{n}}^{\otimes2} - \partial_{(2m-2)/{n}}^{\otimes2}
\bigr).
\]
By Lemma~\ref{le2.1},
$\| \delta^2 (g_4) \|_{L^2(\Omega)} \le c_1{\mathbb E}\| g_4 \|_{\hten
^{\otimes2}} + c_2{\mathbb E} \| Dg_4 \|_{\hten^{\otimes3}} +
c_3{\mathbb E}\| D^2 g_4 \|_{\hten^{\otimes4}}$,
and so $\| \delta^2 (g_4) \|_{L^2(\Omega)} \le Cn^{-{1/2}}$ for
each $\lfloor\frac{nt_1}{2} \rfloor< m \le\lfloor\frac{nt_2}{2}
\rfloor$. We can write
\begin{eqnarray*}
&&
{\mathbb E}\bigl\llvert\bigl\langle u_n^i,
\delta^2 \bigl(D^2 u_n^i\bigr)
\bigr\rangle_{\hten
^{\otimes2}}\bigr\rrvert\\[-2pt]
&&\qquad\le Cn^{-{1/2}} \sum
_{\ell, m
=1}^{\lfloor
{nt_2}/{2} \rfloor} \bigl\llvert\langle
\varepsilon_{(2m-1)/{n}}, \partial_{(2\ell-1)/{n}} \rangle_\hten^2
 - \langle\varepsilon_{(2m-1)/{n}}, \partial_{(2\ell-2)/{n}}
\rangle_\hten^2 \bigr\rrvert.
\end{eqnarray*}

We need an estimate for the double sum. We have by condition (iii),
\begin{eqnarray*}
&&
\sum_{\ell, m=1}^{\lfloor{nt_2}/{2} \rfloor}  \bigl[ \bigl\llvert
\langle\varepsilon_{(2m-1)/{n}}, \partial_{(2\ell
-1)/{n}}
\rangle_\hten^2 - \langle\varepsilon_{(2m-1)/{n}},
\partial_{(2\ell-2)/{n}} \rangle_\hten^2\bigr\rrvert\bigr]
\\[-2pt]
&&\qquad \le\sup_{\ell,m} \bigl\llvert\langle\varepsilon_{(2m-1)/{n}}, {
\mathbf1}_{[({2\ell-2})/{n},{{2\ell}/{n}}]} \rangle_\hten\bigr
\rrvert\\[-2pt]
&&\quad\qquad\hspace*{14pt}{}\times\sum
_{\ell,m=1}^{\lfloor{nt_2}/{2} \rfloor} \bigl\llvert\langle
\varepsilon_{(2m-1)/{n}}, \partial_{(2\ell-1)/{n}} - \partial
_{(2\ell-2)/{n}}
\rangle_\hten\bigr\rrvert
\\[-2pt]
&&\qquad \le Cn^{-{1/2}} \sum_{\ell, m=1}^{\lfloor{nt_2}/{2}
\rfloor}
C_2 n^{-{1/2}} \bigl[ \bigl( |\ell-m|^{-{3/2}} + (
\ell-1)^{-{3/2}} \bigr)\wedge1 \bigr]
\\[-2pt]
&&\qquad \le Cn^{-1} \sum_{\ell=1}^{\lfloor{nt_2}/{2} \rfloor}
\sum_{p=1}^\infty p^{-{3/2}} \le C.
\end{eqnarray*}

This provides an upper bound for the double sum, and hence the first
term of (\ref{D2Fi}) is $O(n^{-{1/2}})$. Note that in the above
estimate the double sum is taken over $1 \le\ell, m\le\lfloor
\frac{nt_2}{2} \rfloor$. It follows that this estimate also
holds for the case $i=j$, that is, ${\mathbb E}\llvert\langle u_n^i,
\delta^2(D^2 u_n^i) \rangle_{\hten^{\otimes2}}\rrvert\le
Cn^{-{1/2}}$.\vspace*{8pt}

%Second term
\textit{Second term.}
Using $t_1 < t_2$ as above,
\begin{eqnarray*}
\hspace*{-3pt}&&
{\mathbb E}\bigl\llvert\bigl\langle u_n^i, \delta
\bigl(D u_n^j\bigr) \bigr\rangle_{\hten^{\otimes2}}\bigr
\rrvert
\\[-2pt]
\hspace*{-3pt}&&\quad = {\mathbb E}\Biggl\llvert\Biggl\langle\sum_{j=1}^{\lfloor
{nt_1}/{2} \rfloor}
f'' (W_{({2j-1})/{n}}) \bigl(\partial_{({2j-1})/{n}}^{\otimes2}
- \partial_{(2j-2)/{n}}^{\otimes2} \bigr),\\
\hspace*{-3pt}&&\quad\hspace*{29pt} \delta\Biggl( \sum
_{k=\lfloor
{nt_1}/{2} \rfloor}^{\lfloor{nt_2}/{2}\rfloor} f^{(3)} (W_{(2k-1)/{n}})
\varepsilon_{(2k-1)/{n}} \otimes\bigl(\partial_{(2k-1)/{n}}^{\otimes2}
- \partial_{(2k-2)/{n}}^{\otimes2} \bigr) \Biggr) \Biggr
\rangle_{\hten^{\otimes2}}\Biggr\rrvert
\\
\hspace*{-3pt}&&\quad ={\mathbb E}\biggl\llvert\sum_j
\sum_k f''(W_{(2j-1)/{n}})
\bigl\langle\varepsilon_{(2k-1)/{n}}\otimes(\partial_{(2k-1)/{n}} -
\partial_{(2k-2)/{n}} ),\\
\hspace*{-3pt}&&\quad\hspace*{166pt} ( \partial_{(2j-1)/{n}} - \partial_{({2j-2})/{n}}
)^{\otimes2} \bigr\rangle_{\hten
^{\otimes
2}}\biggr\rrvert
\\
\hspace*{-3pt}&&\qquad{} \times\bigl\llvert\delta\bigl( f^{(3)}(W_{(2k-1)/{n}}) (
\partial_{(2k-1)/{n}} - \partial_{(2k-2)/{n}} ) \bigr)\bigr
\rrvert
\\
\hspace*{-3pt}&&\quad \le C {\mathbb E} \Bigl[\sup_{0\le s \le t}\bigl|f''(W_s)\bigr|
\Bigr] \Bigl(\sup_{s,j}\bigl\llvert\langle\varepsilon_s,
\partial_{{j}/{n}} \rangle_\hten\bigr\rrvert\Bigr) \Bigl(
\sup_k \bigl\|\delta(g_3)\bigr\|_{L^2(\Omega
)} \Bigr) \\
\hspace*{-3pt}&&\qquad{}\times\sum
_{j=0}^{\lfloor nt_2 \rfloor} \sum
_{k=0}^{\lfloor nt_2
\rfloor} \bigl\llvert\langle
\partial_{j/n},\partial_{{k/n}} \rangle_\hten\bigr
\rrvert,
\end{eqnarray*}
where in this case, $g_3$ corresponds to the term including
$f^{(3)}(W_t)$. It follows from Lemma~\ref{le6.2} that $\sup| \langle
\varepsilon_s, \partial_{k/n} \rangle_\hten| \le Cn^{-{1/2}};$
and the double
sum is bounded by $Cn^{{1/2}}$ by Corollary~\ref{co4.2}. This leaves an
estimate for $\| \delta(g_3)\|_{L^2(\Omega)}$. By Lemma~\ref{le2.1},
$\llVert\delta(g_3) \rrVert_{L^2(\Omega)} \le c_1 \| g_3 \|_\hten+
c_2 \| Dg_3 \|_{\hten^{\otimes2}}$. For this case,
\[
\| g_3 \|_{\hten}^2 \le{\mathbb E} \Bigl[
\sup_{0\le s \le
t}\bigl|f^{(3)}(W_s)\bigr|^2 \Bigr]
\llVert\partial_{(2k-1)/{n}} -\partial_{(2k-2)/{n}} \rrVert
_\hten^2
\le Cn^{-{1/2}},
\]
hence $\| g_3 \|_{\hten} \le C n^{-{1/4}}$. Similarly,
\begin{eqnarray*}
\llVert Dg_3 \rrVert_{\hten^{\otimes2}} &\le&{\mathbb E} \Bigl[
\sup_{0\le s \le t}\bigl|f^{(4)}(W_s)\bigr| \Bigr]
\sup_{0\le s \le t} \| \varepsilon_s \|_\hten\llVert
\partial_{(2k-1)/{n}} -\partial_{(2k-2)/{n}} \rrVert_\hten\\
&\le&
Cn^{-{1/4}},
\end{eqnarray*}
hence the second term is $O(n^{-{1/4}})$. As in the first term,
the double sum estimate shows that this result also holds for $
\langle
u_n^i, \delta( DF_n^i) \rangle_{\hten^{\otimes2}}$.\vspace*{8pt}

\textit{Third term.}
We can write
\begin{eqnarray*}
&&
\bigl\llvert\bigl\langle u_n^i, u_n^j
\bigr\rangle_{\hten^{\otimes2}}\bigr\rrvert\le\sup_{0\le s \le t}\bigl|f''(W_s)\bigr|^2
\sum_{\ell=1}^{\lfloor
{nt_1}/{2}\rfloor
} \sum
_{m=\lfloor{nt_1}/{2}\rfloor+1}^{\lfloor{nt_2}/{2}
\rfloor} \bigl\llvert\bigl\langle
\partial_{(2\ell-1)/{n}}^{\otimes2} - \partial_{(2\ell
-2)/{n}}^{\otimes2},\\
&&\qquad\hspace*{201pt}\partial_{(2m-1)/{n}}^{\otimes2} - \partial_{(2m-2)/{n}}^{\otimes2}
\bigr\rangle_{\hten^{\otimes2}}\bigr\rrvert,
\end{eqnarray*}
and it follows from Lemma~\ref{le6.1} that ${\mathbb E}\llvert\langle u_n^i,
u_n^j \rangle_{\hten^{\otimes2}}\rrvert\le Cn^{-\varepsilon}$, for
some $\varepsilon> 0$.%\vspace*{8pt}

\textit{Proof of} (b.2).
As in case (b.1), this has the expansion (\ref{D2Fi}). From remarks in
the proof of (b.1), the first two terms have the same estimate as the
$i \neq j$ case, hence only the term $ \langle u_n^i, u_n^i
\rangle_{\hten
^{\otimes2}}$ is significant.\vspace*{8pt}

%Third Term Main Convergence
\textit{Third term.}
Assume for the summation terms that the indices run over $\lfloor\frac
{nt_{i-1}}{2} \rfloor+1 \le j, k \le\lfloor\frac{nt_i}{2} \rfloor$.
We have
\begin{eqnarray*}
&&
\bigl\langle u_n^i, u_n^i \bigr\rangle_{\hten^{\otimes2}} = \sum_{j,k}
f''(W_{({2j-1})/{n}}) f''(W_{(2k-1)/{n}})\\[-2pt]
&&\qquad\hspace*{51.5pt}{}\times
\bigl\langle\partial_{({2j-1})/{n}}^{\otimes2} -
\partial_{({2j-2})/{n}}^{\otimes2},
\partial_{(2k-1)/{n}}^{\otimes2} - \partial_{(2k-2)/{n}}^{\otimes2} \bigr\rangle_{\hten^{\otimes2}}.
\end{eqnarray*}
Expanding the product, observe that
\begin{eqnarray*}
&&
\bigl\langle\partial_{({2j-1})/{n}}^{\otimes2} - \partial_{(2j-2)/{n}}^{\otimes2}, \partial_{(2k-1)/{n}}^{\otimes2} -
\partial_{(2k-2)/{n}}^{\otimes2} \bigr\rangle_{\hten^{\otimes2}}\\[-2pt]
&&\qquad=
\beta_n(2j-1,2k-1)^2 - \beta_n(2j-1,2k-2)^2
\\[-2pt]
&&\qquad\quad{} -\beta_n(2j-2,2k-1)^2 +\beta_n(2j-2,2k-2)^2,
\end{eqnarray*}
where $\beta_n(\ell, m)$ is as defined for condition (v). For each $n$,
define discrete measures on $\{ 1, 2, \ldots\}^{\otimes2}$ by
\begin{eqnarray*}
\mu_n^+&:=& \sum_{j,k=1}^\infty
\beta_n(2j-1,2k-1)^2+\beta_n(2j-2,2k-2)^2
\delta_{jk};
\\[-2pt]
\mu_n^-&:=& \sum_{j,k=1}^\infty
\beta_n(2j-1,2k-2)^2+\beta_n(2j-2,2k-1)^2
\delta_{jk},
\end{eqnarray*}
where in this case $\delta_{jk}$ denotes the Kronecker delta. In the
following, we show only~$\eta_n^+$, with $\eta_n^-$ being similar. It
follows from condition (v) that for each $t > 0$,
\begin{eqnarray*}
\mu^+ \bigl( [0,t]^2 \bigr) :\!&=& \lim_{n \to\infty
}
\mu_n \biggl( \biggl\lfloor\frac{nt}{2} \biggr\rfloor, \biggl
\lfloor\frac
{nt}{2} \biggr\rfloor\biggr)
\\[-2pt]
& =& \lim_n \sum_{j,k=1}^{ \lfloor{nt/2} \rfloor}
\beta_n(2j-1,2k-1)^2+\beta_n(2j-2,2k-2)^2\\[-2pt]
&=& \eta^+(t).
\end{eqnarray*}
Moreover, if $0 < s < t$, then
\begin{eqnarray*}
&&
\mu_n \biggl( \biggl\lfloor\frac{ns}{2} \biggr\rfloor, \biggl
\lfloor\frac{nt}{2} \biggr\rfloor\biggr) \\[-2pt]
&&\qquad= \mu_n \biggl( \biggl
\lfloor\frac{ns}{2} \biggr\rfloor, \biggl\lfloor\frac
{ns}{2} \biggr
\rfloor\biggr)\\[-2pt]
&&\qquad\quad{} + \sum_{j=1}^{\lfloor{ns}/{2}\rfloor} \sum
_{k = \lfloor{ns}/{2}\rfloor+1}^{\lfloor{nt}/{2} \rfloor} \beta_n(2j-1,2k-1)^2
+\beta_n(2j-2,2k-2)^2,
\end{eqnarray*}
which converges to $\mu^+([0,s]^2)$ because the disjoint sum vanishes
by Lem\-ma~\ref{le6.1}. Hence, we can conclude that $\mu_n$ converges weakly to
the measure given by $\mu^+ ([0,s]\times[0,t]) = \eta^+ (s \wedge t)$.
It follows by continuity of $f''(W_t)$ and Portmanteau theorem that
\begin{eqnarray*}
&&\sum_{j,k=1}^{ \lfloor{nt/2} \rfloor} f''(W_{(2j-1)/{n}})
f''(W_{(2k-1)/{n}}) \bigl( \beta_n(2j-1,2k-1)^2
+\beta_n(2j-2,2k-2)^2 \bigr)
\\
&&\qquad=\int_{{\mathbb R}^2} f''(W_s)
f''(W_u) {\mathbf1}_{s<t} {
\mathbf1}_{u<t} \mu_n^+(ds,du)
\end{eqnarray*}
converges to
\[
\int_0^t f''(W_s)^2
\eta^+(ds).
\]
Combining this result with a similar integral defined for $\mu^-$, we
have for $t >0$,
\begin{eqnarray*}
&&\lim_{n\to\infty} \sum_{j,k = 1}^{ \lfloor{nt/2} \rfloor}
f''(W_{({2j-1})/{n}}) f''(W_{(2k-1)/{n}})\\
&&\qquad\hspace*{23.2pt}{}\times
\bigl\langle\partial_{({2j-1})/{n}}^{\otimes2} -
\partial_{({2j-2})/{n}}^{\otimes2}, \partial_{(2k-1)/{n}}^{\otimes2} - \partial_{(2k-2)/{n}}^{\otimes2}
\bigr\rangle_{\hten^{\otimes2}}
\\
&&\qquad = \int_0^t f''(W_s)
\mu^+(ds) -\int_0^t f''(W_s)
\mu^-(ds) = \int_0^t f''(W_s)
\eta(ds),
\end{eqnarray*}
where we define $\eta(t) = \eta^+(t)-\eta^-(t)$. It follows that on the
subinterval $[t_{i-1}, t_i]$ we have
the result
\[
\bigl\langle u_n^i, u_n^i \bigr
\rangle_{\hten^{\otimes2}} \longrightarrow\int_{t_{i-1}}^{t_i}
f''(W_s)^2 \eta(ds)
\]
in $L^1(\Omega)$ as $n \to\infty$. %$\square$

\section*{Acknowledgments}

The authors wish to thank two referees for a careful reading and
helpful comments, especially with respect to the examples section.

%suskaldyti doi

% imsref loaded by lrinkeviciute, 2012-10-10 13:31:15

\printaddresses

\end{document}